\title{Multi-block Min-max Bilevel Optimization with Applications in Multi-task Deep AUC Maximization}
\author{
  Quanqi Hu \\
  Department of Computer Science\\
  Texas A\&M University\\
  College Station, TX 77843 \\
  \texttt{quanqi-hu@tamu.edu} \\
   \And
    Yongjian Zhong \\
    Department of Computer Science\\
    University of Iowa\\
    Iowa City, IA 52242 \\
    \texttt{yongjian-zhong@uiowa.edu} \\
    \And
    Tianbao Yang \\
    Department of Computer Science\\
    Texas A\&M University\\
    College Station, TX 77843 \\
    \texttt{tianbao-yang@tamu.edu} \\
}
\newtheorem{theorem}{Theorem}[section]
\newtheorem{lemma}[theorem]{Lemma}
\theoremstyle{definition}
\newtheorem{definition}[theorem]{Definition}
\newtheorem{assumption}[theorem]{Assumption}
\def\cA{\mathcal{A}}
\def\cB{\mathcal{B}}
\def\cD{\mathcal{D}}
\def\cO{\mathcal{O}}
\def\cP{\mathcal{P}}
\def\cS{\mathcal{S}}
\def\cQ{\mathcal{Q}}
\def\E{\mathbb E}
\def\R{\mathbb R}
\def\I{\mathbb I}
\def\w{\textbf{w}}
\def\x{\textbf{x}}
\def\z{\textbf{z}}
\def\v{\textbf{v}}
\def\y{\textbf{y}}
\def\u{\textbf{u}}
\def\AUC{\text{AUC}}
\def\CE{\text{CE}}
\def\balp{\boldsymbol\alpha}
\def\blam{\boldsymbol\lambda}
\DeclareMathOperator*{\argmax}{arg\,max}
\DeclareMathOperator*{\argmin}{arg\,min}
\begin{document}
\maketitle

\begin{abstract}
In this paper, we study multi-block min-max bilevel optimization problems, where the upper level is non-convex strongly-concave minimax objective and the lower level is a strongly convex objective, and there are multiple blocks of dual variables and lower level problems. Due to the intertwined  multi-block min-max bilevel structure, the computational cost at each iteration could be prohibitively high, especially with a large number of blocks. To tackle this challenge, we present two single-loop randomized stochastic algorithms, which require updates for only a constant number of blocks at each iteration. Under some mild assumptions on the problem, we establish their sample complexity of $\cO(1/\epsilon^4)$ for finding an $\epsilon$-stationary point. This matches the optimal complexity for solving stochastic nonconvex optimization under a general unbiased stochastic oracle model. Moreover, we provide two applications of the proposed method in multi-task deep AUC (area under ROC curve) maximization and multi-task deep partial AUC maximization. Experimental results validate our theory and demonstrate the effectiveness of our method on problems with hundreds of tasks.
\end{abstract}

% %\setlength{\belowcaptionskip}{-10pt}
% \setlength{\textfloatsep}{2pt}% Remove \textfloatsep
% %\abovedisplayskip=5pt
% %\abovedisplayshortskip=5pt
% %\belowisplayskip=5pt
% %\belowdisplayshortskip=5pt
% \setlength{\abovedisplayskip}{2pt}
% \setlength{\belowdisplayskip}{2pt}

% \vspace*{-0.15in}
\section{Introduction}
% \vspace*{-0.08in}
We consider multi-block min-max bilevel optimization problem of the following formulation 
\begin{equation}\label{MMB}
\begin{aligned}
&\min_{\x\in \R^{d_x}}\max_{\balp\in \cA^m} F(\x,\balp):=\frac{1}{m}\sum_{i=1}^m \left\{f_i(\x,\alpha_i,\y_i(\x)):=\E_{\xi\in \cP_i}[f_i(\x,\alpha_i,\y_i(\x);\xi)]\right\} \qquad \text{(upper)}\\
& \text{s.t. }\y_i(\x)=\argmin_{\y_i\in \R^{d_y}} g_i(\x,\y_i):=\E_{\zeta\in \cQ_i}[g_i(\x,\y_i;\zeta)], \quad \text{for } i=1,2,\dots,m. \qquad~~~~~~~ \text{(lower)}
\end{aligned}
\end{equation}
where $f_i$ and $g_i$ are smooth functions and $\cA\subset \R^{d_\alpha}$ is a convex set. In particular, in this paper we assume that for each $i\in \{1,\dots,m\}$, $f_i(\x,\alpha_i,\y_i)$ is strongly concave in the dual variable $\alpha_i$ but can be nonconvex in the primal variable $\x$, and $g_i(\x,\y_i)$ is strongly convex in $\y_i$. The \textit{upper problem} $\min_{\x \in \R^{d_x}}\max_{\balp\in \cA^m} F(\x, \balp)$ is a min-max optimization problem where the \textit{lower problems} $\{\y_i(\x)=\argmin_{\y_i\in \R^{d_y}} g_i(\x,\y_i)\}_{i=1}^m$ are involved as variables. For each \textit{block} $i$, the \textit{upper-level objective} $f_i(\x,\alpha_i,\y_i)$ and the \textit{lower-level objective} $g_i(\x,\y_i)$ depend only on its corresponding block of variables $\balp$ and $\y$, i.e. $\alpha_i$ and $\y_i$. This problem has important applications in machine learning, e.g., multi-task deep AUC maximization as presented in section~\ref{sec:app}. 

Tackling problem (\ref{MMB}) is challenging as it involves solving a min-max problem with coupled multiple minimization problems simultaneously. The naive way for solving it is to do multiple gradient ascents and descents for $\balp$ and $\y$, respectively, to ensure a good estimation of the gradient for updating $\x$. However, this approach has two major drawbacks. As the algorithm involves a double loop structure, it can be computationally expensive and give suboptimal theoretical complexity. On the other hand, the multi-block structure requires data sampling from distributions $\cP_i, \cQ_i$ for all blocks, which may lead to an impractical demand for memory. %This paper aims to propose an efficient solution for addressing these issues to enable important applications in machine learning. 

\vspace*{-0.08in}
\subsection{Related work}
\vspace*{-0.1in}
\noindent{\bf Min-max Bilevel optimization.} To the best of our knowledge, the only existing work that provides a stochastic algorithm with provable convergence guarantee on min-max bilevel problems is  \cite{GuMinmaxbilevel2022}. They propose a single loop bi-time scale stochastic algorithm based on gradient descent ascent, and prove that it converges to an $\epsilon$-stationary point with an oracle complexity of $\cO(\epsilon^{-5})$. Nevertheless, this convergence result is established for a special case where $f(\x,\cdot,\y)$ is a linear function. 

\vspace*{-0.05in}
\noindent{\bf Stochastic Nonconvex Strongly Concave Min-max Problems.} The considered problem is also closely related to non-convex strongly concave min-max problems, which have been studied extensively recently. To the best of our knowledge, \cite{hassan18nonconvexmm} establishes the first result on non-smooth nonconvex concave min-max problems. They prove a convergence to nearly stationary point of the primal objective function with an oracle complexity in the order of $\cO(\epsilon^{-4})$ for non-convex strongly concave min-max problems with a certain special structure. The same order of oracle complexity is achieved in \cite{YanOptimalEpoch} without relying on any special structure. These two works use two-loop algorithms. There are some studies focusing on single-loop algorithms. \cite{lin2019gradient} analyzes a single-loop stochastic gradient descent ascent (SGDA) method for smooth nonconvex strongly concave problem, which achieves $\cO(\epsilon^{-4})$ complexity but with a large mini-batch size.  In \cite{DBLP:journals/corr/abs-2104-14840}, the same order of complexity is achieved without large mini-batch by employing the stochastic moving average estimator. Some recent works improve the sample complexity to $\cO(\epsilon^{-3})$ under the Lipschitz continuous oracle model for the stochastic gradient using less practical variance reduction techniques~\cite{DBLP:journals/corr/abs-2008-08170,arXiv:2001.03724,TranDinhHybrid}. \cite{pmlr-v161-zhang21c} establishes lower complexity bounds for non-concex strongly concave min-max problem under both general and finite-sum setting and proposes accelerated algorithms that nearly match the lower bounds.

\vspace*{-0.05in}
\noindent{\bf Stochastic Nonconvex Bilevel optimization.} The considered problem belongs to a general family of non-convex bilevel optimization problems. Non-asymptotic convergence results for nonconvex stochastic  bilevel optimization (SBO) with a strongly convex lower problem has been established in several recent studies \cite{DBLP:journals/corr/abs-2102-04671,99401,DBLP:journals/corr/abs-2104-14840,DBLP:journals/corr/abs-2007-05170,DBLP:journals/corr/abs-2010-07962}. As the one who gives the first results for this problem, \cite{99401} proposes a double-loop algorithm with $\cO(\epsilon^{-6})$ oracle complexity for finding an $\epsilon$-stationary point of the objective function. \cite{DBLP:journals/corr/abs-2010-07962} improves the complexity order to $\cO(\epsilon^{-4})$, but suffers from a large mini-batch size. \cite{DBLP:journals/corr/abs-2007-05170} proposes a single-loop algorithm with two time-scale updates that achieves an oracle complexity of $\widetilde{\cO}(\epsilon^{-5})$. Recently, \cite{DBLP:journals/corr/abs-2104-14840} improves the oracle complexity to the state-of-the-art oracle complexity $\widetilde{\cO}(\epsilon^{-4})$ by proposing a single-loop algorithm based on a moving-average estimator. \cite{chen2021tighter} presents a new analysis for (double-loop) SGD-type updates showing that an improved sample complexity $\cO(\epsilon^{-4})$ can be achieved. There are studies that further improve the complexity to $\cO(\epsilon^{-3})$ by leveraging the Lipschitz continuous conditions of stochastic oracles  \cite{DBLP:journals/corr/abs-2102-04671,GuoSVRB,khanduri2021nearoptimal}. %\cite{khanduri2021nearoptimal} proposed a single timescale algorithm based on a double-momentum structure that achieves not only the same oracle complexity, but a reduced per-iteration complexity. 
\cite{https://doi.org/10.48550/arxiv.2205.01608} considers bilevel optimization under distributed setting and proposed algorithms achieving state-of-the-art complexities. However, none of these works tackle multi-block  min-max bilevel optimization problems directly. 

\vspace*{-0.05in}
\noindent{\bf Multi-block Bilevel Optimization.} There are some recent studies considering bilevel optimization with multi-block structure. \cite{GuoSVRB} extends their single-block bilevel optimization algorithm to multi-block structure. Their algorithm requires two independently sampled block batches and for all sampled blocks, each variable needs update using variance reduction technique STORM \cite{cutkosky2019momentum}. For unsampled blocks, an update involving constant factor multiplication is also required. Under Lipschitz continuous conditions on stochastic oracles, the complexity is no worse than $\cO(m/\epsilon^3)$ with $m$ blocks. A more recent work \cite{qiuNDCG2022} considers top-K NDCG optimization, which is formulated as a compositional bilevel optimization with multi-block structure. Their method simplifies the updates by sampling only one block batch in each iteration and requires updates only for the sampled blocks. Their method achieves complexity of $\cO(m/\epsilon^4)$. We use a similar approach as the latter work for estimating the hessian inverse in a block-wise manner. However, this paper differs from~\cite{qiuNDCG2022} in that we tackle a more general multi-block min-max bilevel problems without assuming a particular form of the objective. 

\vspace*{-0.15in}
\subsection{Our Contributions}
\vspace*{-0.12in}
In Section~\ref{sec:alg}, we present two simple single loop single timescale stochastic methods with randomized block-sampling for solving a general form of multi-block min-max bilevel optimization problem under the nonconvex strongly concave (upper) strongly convex (lower) setting. Both methods employ SGD for updating selected $\y_i$ for their corresponding lower-level problems, employs SGA for updating the selected $\alpha_i$, and employs a momentum update for the primal variable $\x$ based on the sampled $\alpha_i, \y_i$. Then we show in Section~\ref{sec:conv}, theoretically, that they converge to $\epsilon$-stationary point with complexity $\cO(\epsilon^{-4})$ under a general unbiased stochastic oracle model. Our result for the single-block setting matches the lower bound for solving smooth, potentially nonconvex optimization through queries to an unbiased stochastic gradient oracle under a bounded variance condition \cite{2019lowerarjevani}. Finally, in section~\ref{sec:app} we present two applications of multi-block min-max bilevel optimization in deep AUC maximization:  multi-task deep AUC maximization and multi-task deep partial AUC maximization. In section~\ref{sec:exp}, empirical results show the effectiveness of the proposed methods.\\
\vspace*{-0.18in}
\section{Proposed Algorithms and convergence Analysis}\label{sec3}
\vspace*{-0.08in}
\noindent \textbf{Notations.} Let $\|\cdot\|$ denote the Euclidean norm of a vector or the spectral
norm of a matrix. For a twice differentiable function $f:X\times Y\to \R$, $\nabla_x f(x,y)$ (resp. $\nabla_y f(x,y)$) denotes its partial gradient taken w.r.t $x$ (resp. $y$), and $\nabla_{xy}^2f(x,y)$ (resp. $\nabla_{yy}^2 f(x,y)$) denotes the Jacobian of $\nabla_xf(x,y)$ w.r.t x (resp. $\nabla_yf(x,y)$ w.r.t y). We let $f(\cdot;\cB)$ represent the unbiased stochastic oracle of $f(\cdot)$ with a sample batch $\cB$ as the input. The unbiased stochastic oracle is said to have bounded variance $\sigma^2$ if $\E[\|f(\cdot;\cB)-f(\cdot)\|^2]\leq \sigma^2$. A mapping $f:X\to \R$ is $C$-Lipschitz continuous if $\|f(x)-f(x')\|\leq C\|x-x'\|$ $\forall x,x'\in X$. Function $f$ is $L$-smooth if its gradient $\nabla f(\cdot)$ is $L$-Lipschitz continuous. A function $g:X\to \R$ is $\lambda$-strongly convex if $\forall x,x'\in X$, $g(x)\geq g(x')+\nabla g(x')^T(x-x')+\frac{\lambda}{2}\|x-x'\|^2$. A function $g:X\to \R$ is $\lambda$-strongly concave if $-g(x)$ is $\lambda$-strongly convex. Let $\Pi_\cA$ denote a projection function onto a convex set $\cA$. For notation simplicity, we use $\cS$ to denote the set of all block indices, \textit{i.e.} $\cS=\{1,\dots,m\}$.\\
% \vspace*{-0.15in} 
We state the definition of $\epsilon$-stationary point as following.
\begin{definition}
Consider a differentiable function $F(\x)$, a point $\x$ is called $\epsilon$-stationary if $\|\nabla F(\x)\|\leq \epsilon$. A stochastic algorithm is said to achieve an $\epsilon$-stationary point  if $\E[\|\nabla F(\bar\x_t)\|]\leq \epsilon$, where $\bar\x_t$ is the algorithm output at the $t$-th iteration and the expectation is taken over the randomness of the algorithm until the iteration $t$.
\end{definition}
\vspace*{-0.1in}
\noindent \textbf{Assumptions.}  Before presenting our algorithm, we make the following well-behaving assumptions.
\begin{assumption}\label{assump1}
For functions $f_i$ and $g_i$, we assume that the following conditions hold for all $i\in \cS$
\begin{itemize}[leftmargin=0.2in]
% [noitemsep,topsep=0pt]
    \item $f_i(\x,\alpha_i,\y_i)$ is $\mu_f$-strongly concave in terms of $\alpha_i$. $g_i(x,\y_i)$ is $\mu_g$-strongly convex in terms of $\y_i$.
    \item $f_i$ is $C_f$-Lipschitz continuous in terms of both $\x$ and $\y_i$, and $f_i,g_i$ are $L_f,L_g$-smooth respectively.
    \item $\|\nabla_{xy}^2g_i(\x,\y_i)\|^2\leq C_{gxy}^2$, $ \nabla_{yy}^2g_i(\x,\y_i;\zeta)\succeq \mu_g I$.
    \item $\nabla_{xy}^2g_i(\x,\y_i),\nabla_{yy}^2g_i(\x,\y_i)$ are $L_{gxy},L_{gyy}$-Lipschitz continuous respectively.
\end{itemize}
\end{assumption}
We remark that the Lipschitz continuity condition of $f_i$ in terms of $\x$ can be removed when there is only one block. Other Lipschitz continuity conditions are stadnard in bilevel optimization literature. Moreover, the gradients of functions $f_i$ and $g_i$ can only be accessed through unbiased oracles with bounded variance.
% \vspace*{-0.05in}
\begin{assumption}\label{assump2}
The unbiased stochastic oracles $\nabla_x f_i(\x,\alpha_i,\y_i;\cB)$, $\nabla_\alpha f_i(\x,\alpha_i,\y_i;\cB)$,  $\nabla_y f_i(\x,\alpha_i,\y_i;\cB)$,  $ \nabla_y g_i(\x,\y_i;\cB)$,  $\nabla_{xy}^2g_i(\x,\y_i;\cB)$, $ \nabla_{yy}^2g_i(\x,\y_i;\cB)$ have variances bounded by $\frac{\sigma^2}{|\cB|}$ for all $i\in \cS$, where $|\cB|$ denotes the size of the sampled batch $\cB$.
\end{assumption}
\vspace*{-0.1in}
These assumptions are similar to those made in many existing works for SBO \cite{DBLP:journals/corr/abs-2102-04671,99401,DBLP:journals/corr/abs-2007-05170,DBLP:journals/corr/abs-2010-07962}.

% \vspace*{-0.05in}
\textbf{Moving average gradient estimator.} Algorithms based on moving average estimators have achieved the state-of-the-art oracle complexity in both min-max and bilevel optimizations~\cite{https://doi.org/10.48550/arxiv.2104.14840}.  Here we give a brief introduction to the moving average estimator. For solving a nonconvex minimization problem $\min_{\x \in \R^d}F(\x)$ through an unbiased oracle $\cO_F(\x)$, i.e. $\E[\cO_F(\x)]=\nabla F(\x)$, the stochastic momentum method (stochastic heavy-ball method) that employs moving average updates is given by
\begin{align*}
    \,& \v_{t+1}=(1-\beta)\v_t+\beta \cO_F(\x_t),\quad\quad \x_{t+1}=\x_t-\eta \v_{t+1},
\end{align*}
where $\beta$ and $\eta$ are momentum parameter and learning rate, respectively. As a moving average of the historical gradient estimator, the sequence of $\v_{t+1}$ could achieve an effect of variance diminishing  across a long run (\cite{wang2017stochastic}).

% The variance recursion property of the SEMA estimator (Lemma~\ref{lemwang} in the appendix) plays a key role in the convergence analysis.

\vspace*{-0.05in}
\subsection{The Proposed Algorithms}\label{sec:alg}
\vspace*{-0.05in}
\begin{algorithm}[t]
\begin{algorithmic}[1]
\REQUIRE $\alpha^0, \y^0, H^0, \z_0, \x_0$
% \ENSURE The sequence $\{v_k\}$.
\FOR {$t=0,1,\dots,T$} 
	\STATE Sample tasks $I_t\subset \cS$. Sample data batches $\cB_i^t\subset \cP_i$, $\tilde{\cB}_i^t\subset \cQ_i$ of batch size $B$ for each $i\in I_t$.
	% \STATE $\alpha_i^{t+1}=\begin{cases}\Pi_{\cA}[\alpha_i^t+ \eta_1 \nabla_\alpha f_i(\x_t,\alpha_i^t,\y_i^t;\cB_i^t)]\quad & \text{if }i \in I_t\\ \alpha_i^t & \text{o.w.}\end{cases}$
	% \STATE $\y_i^{t+1}=\begin{cases} \y_i^t-\eta_2  \nabla_y g_i(\x_t,\y_i^t;\tilde{\cB}_i^t) \quad & \text{if }i\in I_t\\ \y_i^t &\text{o.w.}\end{cases}$
    % \STATE Version 1 (v1):
    \FOR{sampled blocks $i\in I_t$}
    \STATE $\alpha_i^{t+1} = \Pi_{\cA}[\alpha_i^t+ \eta_1 \nabla_\alpha f_i(\x_t,\alpha_i^t,\y_i^t;\cB_i^t)]$
    \STATE $\y_i^{t+1}=\y_i^t-\eta_2  \nabla_y g_i(\x_t,\y_i^t;\tilde{\cB}_i^t)$
    \ENDFOR
	\STATE Update estimator $H^{t+1}$ of $[\nabla^2_{yy} g_i(\x_t,\y_i^t)]^{-1}$ by (\ref{H_update_momentum}) 
	\STATE Update gradient estimator $\Delta^{t+1}$ of $\nabla_x F(\x_t)$ by (\ref{gradient_est})
	\STATE $\z_{t+1}=(1-\beta_0)\z_t+\beta_0 \Delta^{t+1}$
	\STATE $\x_{t+1}=\x_t-\eta_0 \z_{t+1}$
\ENDFOR
% \RETURN $\x_{T+1},\balp^{T+1},\y^{T+1}$
\end{algorithmic}
\caption{A Stochastic Algorithm for Multi-block Min-max Bilevel Optimization (v1)}\label{alg}
\end{algorithm}

%{\color{red}{Explain the algorithmic design in details. Here is the logic. First, briefly explain the algorithm, its similarites and differences from Zhishuai's paper.  First, show the full gradient and explain the challenge for computing it. Then decompose it into three components: (i) estimating $\alpha$, (ii) estimating $y$ and (iii) estimatign Hessian inverse. For each component, present a high level idea  and a technical lemma to explain the rationale. Finally present the convergene results and add necessary remarks. } }

First, we propose a simple single loop stochastic algorithm~\ref{alg} to solve the multi-block min-max bilevel optimization problem. At the beginning of each iteration, we first sample a set of blocks $I_t$ and data batches $\cB^t_i,\tilde{\cB}^t_i$ for each selected block $i\in I_t$. Then we update estimators of $\alpha_i(\x_t)$ and $\y_i(\x_t)$ for all selected blocks $i\in I_t$ using one step of SGA and SGD. Then, we compute an estimator of the hessian inverse $H^{t+1}$ of the lower-level objective and compute a gradient estimator $\Delta^{t+1}$ of the upper-level objective. Finally, we compute the moving average estimator $\z_{t+1}$ of $\nabla F(\x_t)$ and update $\x_{t+1}$. Note that the design of our algorithm on the min-max bilevel optimization part is inspired by~\cite{https://doi.org/10.48550/arxiv.2104.14840}, and it is similar to their momentum-based algorithms PDSM (for min-max problem) and SMB (for bilevel problem) in their paper. In fact, if we set the number of blocks to be one and remove $\y$ and the lower level problems, then Algorithm~\ref{alg} is the same as PDSM. Similarly, if the number of blocks is one and the dual variables in the upper level problem are removed, then the proposed algorithm becomes similar as SMB except for the hessian inverse update with reasons explain shortly. In other words, our proposed method is a generalized form of momentum-based algorithm for min-max and bilevel optimization problems. Additionally, Algorithm~\ref{alg} only updates $O(1)$ blocks of dual variables $\alpha_i$ and the variables $\y_i$ of the lower-level problems. These make the analysis of Algorithm~\ref{alg} much more involved. 

To further understand Algorithm~\ref{alg}, we first define the objective function $F(\x):=\frac{1}{m}\sum_{i\in \cS} f_i(\x,\alpha_i(\x),\y_i(\x))$,
% \begin{equation*}
%     F(\x):=\frac{1}{m}\sum_{i\in \cS} f_i(\x,\alpha_i(\x),\y_i(\x)),
% \end{equation*}
where $\y_i(\x)=\argmin_{\y_i}g_i(\x,\y_i)$ and $\alpha_i(\x):=\argmax_{\alpha_i} f_i(\x,\alpha_i,\y_i(\x))$,  so that the Problem~(\ref{MMB}) can be rewritten as $\min_{\x}F(\x)$. The updates for $\alpha_i^{t+1}$'s and $\y_i^{t+1}$'s are intuitive since the gradient estimations of $\nabla_\alpha f_i(\x_t,\alpha_i^t,\y_i^t)$ and $\nabla_y g_i(\x_t,\y_i^t)$ are directly available from the unbiased stochastic oracles. However, since functions $\y_i(\x)$ and $\alpha_i(\x)$ are implicit, estimating the gradient $\nabla F(\x)$ is difficult. In fact, one may apply the corollary of Theorem 1 in \cite{BERNHARD19951163} to get:
\begin{equation*}
\begin{aligned}
    \nabla F(\x) &=\frac{1}{m}\sum_{i\in \cS} 
    \left(\nabla_x f_i(\x,\alpha_i(\x),\y_i(\x))+\nabla\y_i(\x)\nabla_yf_i(\x,\alpha_i(\x),\y_i(\x))\right).
\end{aligned}
\end{equation*}
A standard approach in bilevel optimization literature \cite{99401} for computing $\nabla \y_i(\x)$ is to derive $\nabla \y_i(\x)=-\nabla^2_{xy}g_i(\x,\y_i(\x))[\nabla^2_{yy}g_i(\x,\y_i(\x))]^{-1}$ from the optimality condition of $\y_i(\x)$. Therefore, the gradient we are looking for is given by
\begin{equation*}
\begin{aligned}
    \nabla F(\x) &=
    \frac{1}{m}\sum_{i\in \cS}\nabla_x f_i(\x,\alpha_i(\x),\y_i(\x))-\nabla^2_{xy}g_i(\x,\y_i(\x))[\nabla^2_{yy}g_i(\x,\y_i(\x))]^{-1}\nabla_y f_i(\x,\alpha_i(\x),\y_i(\x)).
\end{aligned}
\end{equation*}
All components in this gradient can be easily obtained from unbiased stochastic oracles except for the inverse of hessian $[\nabla^2_{yy}g_i(\x,\y_i(\x))]^{-1}$ for all blocks. This could be problematic in the sense of theory and practical implementation. For practical implementation, we do not want to update the hessian inverse estimators for all blocks, which is prohibitive when the number of blocks is large. 
A common approach used in the literature of SBO is to use Neumann series~\cite{99401} $H_i^{t+1}= \frac{k_t}{C_{gyy}}\prod_{j=1}^q \left(I-\frac{1}{C_{gyy}}\nabla_{yy}^2 g_i(\x_{t},\y_i^{t};\xi_i^t)\right)$,
% \begin{equation*}%\label{H_update_neumann}
% \begin{aligned}
%     & \text{Choose $q\in \{1,\dots, k_t\}$ randomly,}\quad H_i^{t+1}= \frac{k_t}{C_{gyy}}\prod_{j=1}^q \left(I-\frac{1}{C_{gyy}}\nabla_{yy}^2 g_i(\x_{t},\y_i^{t};\xi_i^t)\right),
%    % \begin{cases} \frac{k_t}{C_{gyy}}\prod_{j=1}^q \left(I-\frac{1}{C_{gyy}}\nabla_{yy}^2 g_i(\x_{t},\y_i^{t};\xi_i^t)\right)% \quad & %\text{if }i\in I_t\\ H_i^t &\text{o.w.}\end{cases},
% \end{aligned}
% \end{equation*}
where $q$ is chosen from $\{1,\dots, k_t\}$ randomly and $k_t$ is the number of samples $\{\xi_i^t\}_{i=1}^{k_t}$ for estimating the hessian inverse. This estimator is a biased one and its error w.r.t to $\nabla_{yy}^2 g_i(\x_{t},\y_i^{t})^{-1}$ is controlled by the number of samples $k_t$~\cite{99401}. However, it is problematic to employ the above estimator for only the sampled blocks $i\in I_t$ because the error for those not sampled cannot be controlled. 
%This is also a common solution in many existing studies on bilevel optimization \cite{99401,https://doi.org/10.48550/arxiv.2104.14840}. This method no longer requires matrix inverse computation. However, it suffers from a worse complexity regarding the dependency on the batch sizes $B$ and $|I_t|$ according to our analysis. 
%Here we present two approaches to deal with this issue, each of which has its own (dis)advantages. 
To address this issue, we use a different approach for estimating the hessian inverse by only updating the estimators for those sampled blocks~\cite{qiuNDCG2022}. The idea is to maintain a momentum term $s_i^{t+1}$ for each block that stores historical information on the hessian estimator $\nabla^2_{yy}g_i(\x_t,\y_i^t;\tilde{\cB}_i^t)$. And the hessian inverse is approximated by directly computing the inverse of $s_i^{t+1}$, i.e., for sampled $i\in I_t$,
% \begin{equation}\label{H_update_momentum}
% \begin{aligned}
%     & s_i^{t+1}=\begin{cases} (1-\beta_1)s_i^t+\beta_1\nabla^2_{yy}g_i(\x_t,\y_i^t;\tilde{\cB}_i^t) \quad & \text{if }i\in I_t\\ s_i^t &\text{o.w.}\end{cases},\quad H_i^{t+1} = [s_i^{t+1}]^{-1}, \quad \forall i\in I_t.
% \end{aligned}
% \end{equation}
\begin{equation}\label{H_update_momentum}
\begin{aligned}
    & s_i^{t+1}= (1-\beta_1)s_i^t+\beta_1\nabla^2_{yy}g_i(\x_t,\y_i^t;\tilde{\cB}_i^t),\quad H_i^{t+1} = [s_i^{t+1}]^{-1}
\end{aligned}
\end{equation}
In terms of theoretical analysis, we are not bounding the individual error $H_i^{t+1} - \nabla_{yy}^2 g_i(\x_{t},\y_i^{t})^{-1}$ for all blocks, but the cumulative error for all blocks across all iterations. This is exhibited in Lemma~\ref{MA_s_multi_tasks}. 
%This approach is suitable for problems that have low dimensionality for each $\y_i$, but could potentially suffers a high computational cost for high-dimensional $\y_i$.
As the conclusion of the above discussion, the gradient estimator of $\nabla F(\w_t)$ is given by
\begin{equation}\label{gradient_est}
    \Delta^{t+1}=\frac{1}{|I_t|}\sum_{i\in I_t}\left\{\nabla_x f_i(\x_t,\alpha_i^t,\y_i^t;\cB_i^t)- \nabla_{xy}g_i(\x_t,\y_i^t;\tilde{\cB}_i^t)H_i^t\nabla_y f_i(\x_t,\alpha_i^t,\y_i^t;\cB_i^t)\right\}.
\end{equation}
We maintain a moving average estimator $\z_{t+1}$ for $\Delta^{t+1}$ and finally update $\x_{t+1}$ using $\z_{t+1}$. The detailed steps are presented in Algorithm~\ref{alg}.

Nevertheless, such method is not suitable for problems with a high dimensionality of $\y_i$, since computing the Hessian inverse could be computationally expensive. To this end, we propose the second method, Algorithm~\ref{alg3}, for problems with high dimensionality of $\y_i$. The main idea is to treat $[\nabla^2_{yy}g_i(\x,\y_i)]^{-1}\nabla_y f_i(\x,\alpha_i,\y_i)$ as the solution to a quadratic function minimization problem. As a result, $[\nabla^2_{yy}g_i(\x,\y_i)]^{-1}\nabla_y f_i(\x,\alpha_i,\y_i)$ can be approximated by SGD. Such method for Hassian inverse computation has been studied for solving single-block bilevel optimization problems in some previous works \cite{https://doi.org/10.48550/arxiv.2201.13409, https://doi.org/10.48550/arxiv.2112.04660}. However, none of them has applied this method in multi-block scenario.

Define quadratic function $\gamma_i$ and its minimum point as following
\begin{equation*}
    \v_i(\x,\alpha_i,\y_i) := \argmin_{v\in\R^{d_y}} \gamma_i(v,\x,\alpha_i,\y_i) :=\frac{1}{2} v^T \nabla^2_{yy}g_i(\x,\y_i) v - v^T \nabla_y f_i(\x,\alpha_i,\y_i) 
\end{equation*}
Then we have the gradient $\nabla_v \gamma_i(v,\x,\alpha_i,\y_i) = \nabla^2_{yy}g_i(\x,\y_i) v -  \nabla_y f_i(\x,\alpha_i,\y_i)$,
% \begin{equation*}
%     \nabla_v \gamma_i(v,\x,\alpha_i,\y_i) = \nabla^2_{yy}g_i(\x,\y_i) v -  \nabla_y f_i(\x,\alpha_i,\y_i),
% \end{equation*}
which implies that the unique solution is given by $\v_i(\x,\alpha_i,\y_i) = [\nabla^2_{yy}g_i(\x,\y_i)]^{-1}\nabla_y f_i(\x,\alpha_i,\y_i)$.
% \begin{equation*}
%     \v_i(\x,\alpha_i,\y_i) = [\nabla^2_{yy}g_i(\x,\y_i)]^{-1}\nabla_y f_i(\x,\alpha_i,\y_i)
% \end{equation*}
Note that due to the smoothness of $g$ and Lipschitz continuity of $f$ with respect to $\y$ in Assumption~\ref{assump1}, one may define constant $\Gamma=\frac{C_f}{\mu_g}$ so that $\|\v_i(\x,\alpha_i,\y_i)\|^2\leq \Gamma^2$. Considering the updates in Algorithm~\ref{alg3}, we have $\|\v_i^t\|^2\leq \Gamma^2$ for all $i,t$. Define the stochastic estimator $\nabla_v \gamma_i(v,\x,\alpha_i,\y_i;\cB_i,\tilde{\cB}_i) := \nabla^2_{yy}g_i(\x,\y_i;\tilde{\cB}_i) v -  \nabla_y f_i(\x,\alpha_i,\y_i;\cB_i)$, then it has bounded variance, of which the proof is deferred to Appendix~\ref{HessianInverseFree}. Here we enlarge the value of $\sigma$ so that $\E_{\cB_i^t}[\|\nabla_v \gamma_i(\v^t_i,\x_t,\y_i^t;\cB_i^t,\tilde{\cB}_i^t)-\nabla_v \gamma_i(\v^t_i,\x_t,\y_i^t)\|^2]\leq \frac{\sigma^2}{B}$. It is worth to note that the projection in the updates of $\v_i^{t+1}$ is necessary in order to bound the variance of $\nabla_v \gamma_i(\v^t_i,\x_t,\y_i^t;\cB_i^t,\tilde{\cB}_i^t)$. Instead of taking projection, previous works \cite{https://doi.org/10.48550/arxiv.2201.13409, https://doi.org/10.48550/arxiv.2112.04660} treat the variance boundedness as an assumption, which is not guaranteed without using projection.

\begin{algorithm}[t]
\begin{algorithmic}[1]
\REQUIRE $\alpha^0, \y^0, \v^0, \z_0, \x_0$
% \ENSURE The sequence $\{v_k\}$.
\FOR {$t=0,1,\dots,T$} 
	\STATE Sample tasks $I_t\in \cS$. Sample data batches $\cB_i^t\subset \cP_i$, $\tilde{\cB}_i^t\subset \cQ_i$ of batch size $B$ for each $i\in I_t$.
	% \STATE $\alpha_i^{t+1}=\begin{cases}\Pi_{\cA}[\alpha_i^t+ \eta_1 \nabla_\alpha f_i(\x_t,\alpha_i^t,\y_i^t;\cB_i^t)]\quad & \text{if }i \in I_t\\ \alpha_i^t & \text{o.w.}\end{cases}$
	% \STATE $\y_i^{t+1}=\begin{cases} \y_i^t-\eta_2  \nabla_y g_i(\x_t,\y_i^t;\cB_i^t) \quad & \text{if }i\in I_t\\ \y_i^t &\text{o.w.}\end{cases}$
	% \STATE $\v_i^{t+1}=\begin{cases}\Pi_{\Gamma}\left[\v_i^t-\eta_3\left[\nabla^2_{yy}g_i(\x_t,\y_i^t;\cB_i^t)\v_i^t-\nabla_y f_i(\x_t,\alpha_i^t,\y_i^t;\cB_i^t)\right]\right] & \text{if }i \in I_t\\ \v_i^t & \text{o.w.}\end{cases}$  
    \FOR{sampled blocks $i \in I_t$}
    \STATE $\alpha_i^{t+1}=\Pi_{\cA}[\alpha_i^t+ \eta_1 \nabla_\alpha f_i(\x_t,\alpha_i^t,\y_i^t;\cB_i^t)]$
    \STATE $\y_i^{t+1} = \y_i^t-\eta_2  \nabla_y g_i(\x_t,\y_i^t;\tilde{\cB}_i^t)$
    \STATE $\v_i^{t+1}=\Pi_{\Gamma}\left[\v_i^t-\eta_3\left[\nabla^2_{yy}g_i(\x_t,\y_i^t;\tilde{\cB}_i^t)\v_i^t-\nabla_y f_i(\x_t,\alpha_i^t,\y_i^t;\cB_i^t)\right]\right]$
    \ENDFOR
	\STATE Update gradient estimator $\Delta^{t+1}=\frac{1}{|I_t|}\sum_{i\in I_t}\left[\nabla_x f_i(\x_t,\alpha_i^t,\y_i^t;\cB_i^t)- \nabla_{xy}^2g_i(\x_t,\y_i^t;\tilde{\cB}_i^t)\v_i^{t}\right]$
	\STATE $\z_{t+1}=(1-\beta_0)\z_t+\beta_0 \Delta^{t+1}$
	\STATE $\x_{t+1}=\x_t-\eta_0 \z_{t+1}$
\ENDFOR
% \RETURN $\x_{T+1},\balp^{T+1},\y^{T+1}$
\end{algorithmic}
\caption{A Stochastic Algorithm for Multi-block Min-max Bilevel Optimization (v2)}\label{alg3}
\end{algorithm}

\vspace*{-0.08in}
\subsection{Convergence Analysis}\label{sec:conv}
\vspace*{-0.08in}
% In this section, we present a brief convergence analysis of Algorithm~\ref{alg}. The detailed theorems and proofs are deferred to the appendix.
In this section, we present brief convergence analysis of Algorithm~\ref{alg} and Algorithm~\ref{alg3}. The detailed theorems and proofs are deferred to the appendix.
\vspace*{-0.08in}
\subsubsection{Convergence analysis of Algorithm~\ref{alg}}
\vspace*{-0.08in}
The key point of this analysis is the gap between the true gradient $\nabla F(\x_t)$ and its estimator $\z^{t+1}$. To this end, we define
\begin{align*} \nabla F(\x_t,\balp^t,\y^t):=\frac{1}{m}\sum_{i\in \cS}\left\{ \nabla_x f_i(\x_t,\alpha_i^t,\y_i^t)-\nabla^2_{xy}g_i(\x_t,\y_i^t)\E_t[H_i^t]\nabla_y f_i(\x_t,\alpha_i^t,\y_i^t)\right\}.
\end{align*}
One may notice that the estimator $\Delta^{t+1}$ is in fact approximating $\nabla_x F(\x_t,\balp^t,\y^t)$ instead of $\nabla_x F(\x_t)$. We exploit the moving average formulation of $\z^{t+1}$ and decompose the gap into two parts, $\|\Delta^{t+1}-\nabla_x F(\x_t,\balp^t,\y^t)\|^2$ and $\|\nabla_x F(\x_t,\balp^t,\y^t)-\nabla_x F(\x_t)\|^2$. These two gaps are determined by how well $\y^t$, $\balp^t$ and $H_i^t$ approximate $\y(\x_t)$, $\balp(\x_t)$ and $[\nabla^2_{yy}g_i(\x_t,\y_i^t)]^{-1}$, respectively. In other words, we aim to bound the following three errors, $\|\y(\x_t)-\y^t\|^2=:\delta_{y,t}$, $\|\balp(\x_t)-\balp^t\|^2:=\delta_{\alpha,t}$ and $\|[\nabla^2_{yy}g_i(\x_t,\y_i^t)]^{-1}-H_i^t\|^2$.

We first bound the variance $\E[\delta_{y,t}]$ by proving the following lemma.
\begin{lemma}\label{lem_y_M_new}
Consider the updates for $\y^t$ in Algorithm~\ref{alg}, under Assumption~\ref{assump1} and \ref{assump2}, with $\eta_2\leq \min\{\frac{\mu_g}{L_g^2},\frac{2m}{|I_t|\mu_g}\}$ we have
\begin{equation*}
    \begin{aligned}
    \sum_{t=0}^T\E[\delta_{y,t}] \leq \frac{2m}{|I_t|\eta_2\mu_g}\delta_{y,0}+\frac{4m\eta_2 T\sigma^2}{\mu_g B} +\frac{8m^3C_y^2\eta_0^2}{|I_t|^2\eta_2^2\mu_g^2}\sum_{t=0}^{T-1}\E[\|\z_{t+1}\|^2]\\
    \end{aligned}
\end{equation*}
\end{lemma}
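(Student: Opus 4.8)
The plan is to derive a single one-step recursion of the form $\E[\delta_{y,t+1}]\le(1-a)\E[\delta_{y,t}]+b+d\,\E[\|\z_{t+1}\|^2]$ and then telescope it. The governing intuition is that $\y_i^t$ is a lagging SGD estimate of the \emph{moving} target $\y_i(\x_t)$, so its error can grow only through two channels: the stochastic noise of the SGD step, and the drift of the target caused by the update $\x_{t+1}=\x_t-\eta_0\z_{t+1}$. As a preliminary I would record that each $\y_i(\cdot)$ is $C_y$-Lipschitz; this follows from the implicit formula $\nabla\y_i(\x)=-\nabla^2_{xy}g_i(\x,\y_i(\x))[\nabla^2_{yy}g_i(\x,\y_i(\x))]^{-1}$ already displayed in the excerpt, together with $\|\nabla^2_{xy}g_i\|\le C_{gxy}$ and $\nabla^2_{yy}g_i\succeq\mu_g I$ from Assumption~\ref{assump1}, giving $\|\y(\x_{t+1})-\y(\x_t)\|^2\le mC_y^2\eta_0^2\|\z_{t+1}\|^2$ after summing the $m$ per-block bounds and using $\|\x_{t+1}-\x_t\|=\eta_0\|\z_{t+1}\|$.

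The core is a single-step, single-block estimate. Fix block $i$ and condition on the iteration history. If $i\in I_t$, the update is a plain SGD step on the $\mu_g$-strongly-convex, $L_g$-smooth map $g_i(\x_t,\cdot)$ whose minimizer is $\y_i(\x_t)$; expanding the square, splitting off the zero-mean noise (variance $\le\sigma^2/|\cB_i^t|$), and invoking strong convexity and smoothness yields the standard contraction factor $1-2\eta_2\mu_g+\eta_2^2L_g^2\le 1-\eta_2\mu_g$ under $\eta_2\le\mu_g/L_g^2$, so $\E[\|\y_i(\x_t)-\y_i^{t+1}\|^2\mid i\in I_t]\le(1-\eta_2\mu_g)\|\y_i(\x_t)-\y_i^t\|^2+\eta_2^2\sigma^2/|\cB_i^t|$. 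If $i\notin I_t$ then $\y_i^{t+1}=\y_i^t$ and the error is unchanged. Averaging over the block sampling (each block chosen with probability $p=|I_t|/m$) and summing over all $m$ blocks collapses $mp$ into $|I_t|$ and gives $\E_t[\|\y(\x_t)-\y^{t+1}\|^2]\le(1-\tfrac{|I_t|}{m}\eta_2\mu_g)\delta_{y,t}+|I_t|\eta_2^2\sigma^2/|\cB_i^t|$.

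Then I would splice in the target drift. Writing $\y(\x_{t+1})-\y^{t+1}=(\y(\x_{t+1})-\y(\x_t))+(\y(\x_t)-\y^{t+1})$ and applying Young's inequality $\|a+b\|^2\le(1+c)\|b\|^2+(1+1/c)\|a\|^2$ with $c$ of order $\tfrac{|I_t|}{m}\eta_2\mu_g$, I absorb half of the contraction: $(1+c)(1-\tfrac{|I_t|}{m}\eta_2\mu_g)\le 1-\tfrac{|I_t|}{2m}\eta_2\mu_g=:1-a$, while the drift term is amplified by $1+1/c=O(m/(|I_t|\eta_2\mu_g))$ and bounded via the Lipschitz estimate above. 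This gives the recursion with $a=\tfrac{|I_t|\eta_2\mu_g}{2m}$, $b=O(|I_t|\eta_2^2\sigma^2/|\cB_i^t|)$, and $d=O(m^2C_y^2\eta_0^2/(|I_t|\eta_2\mu_g))$. Rearranging to $a\,\E[\delta_{y,t}]\le\E[\delta_{y,t}]-\E[\delta_{y,t+1}]+b+d\,\E[\|\z_{t+1}\|^2]$, summing over $t=0,\dots,T$ so the $\delta$ terms telescope (dropping the nonnegative $-\E[\delta_{y,T+1}]$), and dividing by $a$ reproduces the three stated terms via $\tfrac1a=\tfrac{2m}{|I_t|\eta_2\mu_g}$, $\tfrac{b}{a}=\tfrac{4m\eta_2\sigma^2}{\mu_g|\cB_i^t|}$, and $\tfrac{d}{a}=\tfrac{8m^3C_y^2\eta_0^2}{|I_t|^2\eta_2^2\mu_g^2}$.

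The constraint $\eta_2\le\tfrac{2m}{|I_t|\mu_g}$ is precisely what keeps $a\le1$ so the recursion is a genuine contraction, and $\eta_2\le\mu_g/L_g^2$ is what the SGD contraction step needs. The main obstacle I anticipate is the bookkeeping around the block randomness: because only a $p=|I_t|/m$ fraction of blocks move per step the effective contraction rate is merely $O(1/m)$, yet the target drifts for \emph{every} block simultaneously. Balancing this weakened contraction against the $1/c$ blow-up of the drift term in Young's inequality is what forces the large $m^3/|I_t|^2$ prefactor, and making the powers of $m$ and $|I_t|$ line up exactly is the delicate part of the argument.
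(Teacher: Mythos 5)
Your proposal is correct and follows essentially the same route as the paper: the paper defers this lemma to Lemma 8 of the cited NDCG work, but its fully written-out proof of the analogous bound for $\balp^t$ (Lemma~\ref{alpha_update_sgd_new}) uses exactly your ingredients — a one-step SGD contraction $1-2\eta_2\mu_g+\eta_2^2L_g^2\le 1-\eta_2\mu_g$ for sampled blocks, the identity $\E_t[\cdot]=\tfrac{|I_t|}{m}(\text{updated})+\tfrac{m-|I_t|}{m}(\text{unchanged})$ for the block sampling, Young's inequality against the $C_y$-Lipschitz drift of the target, and telescoping. Your constant bookkeeping ($1/a$, $b/a$, $d/a$) reproduces the three stated coefficients exactly, so there is nothing to add.
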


One may also bound the second variance $\E[\delta_{\alpha,t}]$ based on the previous variance $\E[\delta_{y,t}]$ following a similar strategy.
\begin{lemma}\label{alpha_update_sgd_new}
Consider the updates for $\balp^t$ in Algorithm~\ref{alg}, under Assumption~\ref{assump1}, \ref{assump2}, with $\eta_1 \leq \min\left\{\frac{\mu_f}{L_f^2}, \frac{1}{\mu_f},\frac{4m}{ \mu_f|I_t|}\right\}$, we have
\begin{equation*}
    \begin{aligned}
    \sum_{t=0}^T\E_t[\delta_{\alpha,t}]\leq \frac{4m}{\eta_1 \mu_f|I_t|}\delta_{\alpha,0} +\frac{24 L_f^2}{\mu_f^2}\sum_{t=0}^{T-1}\E[\delta_{y,t}] +\frac{8m\mu_f\eta_1 \sigma^2T}{B}+\frac{32m^3C_\alpha^2\eta_0^2}{\eta_1^2\mu_f^2|I_t|^2}\sum_{t=0}^{T-1}\E[\|\z_{t+1}\|^2].
    \end{aligned}
\end{equation*}
\end{lemma}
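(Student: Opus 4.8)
The plan is to mirror the one-step contraction argument behind Lemma~\ref{lem_y_M_new}, adapted to the projected stochastic gradient \emph{ascent} updates on the strongly concave dual objective, with the crucial new feature that the ascent direction is evaluated at $\y_i^t$ rather than the true lower-level solution $\y_i(\x_t)$; this mismatch is exactly what couples $\delta_{\alpha,t}$ to $\delta_{y,t}$. First I would fix a block $i$ and condition on the event $i\in I_t$. Using nonexpansiveness of $\Pi_\cA$ together with the fixed-point characterization $\alpha_i(\x_t)=\Pi_\cA[\alpha_i(\x_t)+\eta_1\nabla_\alpha f_i(\x_t,\alpha_i(\x_t),\y_i(\x_t))]$ of the constrained maximizer, I would bound $\|\alpha_i^{t+1}-\alpha_i(\x_t)\|^2$ by the squared distance of the pre-projection iterates and expand. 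After taking expectation over $\cB_i^t$ the zero-mean noise drops out of the cross term and contributes the variance $\eta_1^2\sigma^2/|\cB_i^t|$.

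For the deterministic part I would split the gradient gap $\nabla_\alpha f_i(\x_t,\alpha_i^t,\y_i^t)-\nabla_\alpha f_i(\x_t,\alpha_i(\x_t),\y_i(\x_t))$ into an $\alpha$-component (fixing $\y_i(\x_t)$) and a $\y$-component (fixing $\alpha_i^t$). The $\mu_f$-strong concavity in $\alpha_i$ makes the gradient $\mu_f$-strongly monotone, so the inner product with the $\alpha$-component yields the contraction term $-\mu_f\|\alpha_i^t-\alpha_i(\x_t)\|^2$, while $L_f$-Lipschitz continuity of $\nabla_\alpha f_i$ bounds the $\y$-component by $L_f\|\y_i^t-\y_i(\x_t)\|$. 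A Young's inequality then splits this cross term into a fraction of $\mu_f\|\alpha_i^t-\alpha_i(\x_t)\|^2$ and a term of order $\tfrac{L_f^2}{\mu_f}\|\y_i^t-\y_i(\x_t)\|^2$, the latter summing into the $\delta_{y,t}$ contribution. The condition $\eta_1\le\mu_f/L_f^2$ guarantees the $\eta_1^2\|\cdot\|^2$ remainders are absorbed, leaving the per-block recursion $\E\|\alpha_i^{t+1}-\alpha_i(\x_t)\|^2\le(1-\eta_1\mu_f)\|\alpha_i^t-\alpha_i(\x_t)\|^2+O(\eta_1 L_f^2/\mu_f)\,\|\y_i^t-\y_i(\x_t)\|^2+\eta_1^2\sigma^2/|\cB_i^t|$.

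Next I would pass to the randomized block sampling and to the moving target. Averaging over the choice of $I_t$, where each block is sampled with probability $|I_t|/m$, turns the contraction rate into $\tfrac{|I_t|}{m}\eta_1\mu_f$ on the full sum $\delta_{\alpha,t}=\sum_{i\in\cS}\|\alpha_i(\x_t)-\alpha_i^t\|^2$, since unsampled blocks leave their error unchanged. To handle the change of target from $\alpha_i(\x_t)$ to $\alpha_i(\x_{t+1})$, I would use Lipschitz continuity of the solution map $\alpha_i(\cdot)$, which follows from standard sensitivity analysis of the argmax of a smooth strongly concave function and gives $\|\alpha_i(\x_{t+1})-\alpha_i(\x_t)\|\le C_\alpha\|\x_{t+1}-\x_t\|=C_\alpha\eta_0\|\z_{t+1}\|$, and then apply $\|a-b\|^2\le(1+\rho)\|a-c\|^2+(1+1/\rho)\|c-b\|^2$ with $\rho$ of order $\tfrac{|I_t|}{m}\eta_1\mu_f$ so that the contraction survives and the drift contributes a term proportional to $\tfrac{m}{|I_t|\eta_1\mu_f}C_\alpha^2\eta_0^2\|\z_{t+1}\|^2$.

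This produces a recursion $\E\delta_{\alpha,t+1}\le(1-r)\E\delta_{\alpha,t}+b_t$ with $r=\tfrac{|I_t|\eta_1\mu_f}{2m}$. Telescoping, namely summing $r\,\E\delta_{\alpha,t}\le\E\delta_{\alpha,t}-\E\delta_{\alpha,t+1}+b_t$ and dividing by $r$, yields the prefactor $\tfrac{1}{r}=\tfrac{2m}{|I_t|\eta_1\mu_f}$ in front of $\delta_{\alpha,0}$ and of each error source; the sampling factor $\tfrac{|I_t|}{m}$ cancels against $\tfrac1r$ in the $\delta_{y,t}$ term (giving the $L_f^2/\mu_f^2$ coefficient) and reinforces it in the $\|\z_{t+1}\|^2$ term (giving the $m^3/|I_t|^2$ scaling), recovering the four stated terms. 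The residual conditions $\eta_1\le1/\mu_f$ and $\eta_1\le4m/(\mu_f|I_t|)$ are what keep $(1+\rho)$ times the contraction factor below one and pin down the constants. I expect the main obstacle to be the bookkeeping of this last stage, choosing the Young's parameter and step-size thresholds so that the $\eta_1^2$ remainders, the sampling probability, and the $(1+\rho)$ inflation combine into a clean geometric rate matching the stated constants; establishing the $C_\alpha$-Lipschitz constant of $\alpha_i(\cdot)$ is a standard but necessary prerequisite.
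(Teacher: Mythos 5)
Your proposal follows essentially the same route as the paper's proof: nonexpansive projection against the fixed-point characterization of $\alpha_i(\x_t)$, strong monotonicity for the $\alpha$-component plus Lipschitzness and Young's inequality for the $\y$-component, the $|I_t|/m$ sampling average, the $C_\alpha$-Lipschitz drift of the target absorbed by a $(1+\rho)$ split, and a telescoped geometric recursion. The only deviation is in constant bookkeeping (the paper's effective rate after the $(1+\rho)$ inflation is $\eta_1\mu_f|I_t|/(4m)$, yielding the prefactor $4m/(\eta_1\mu_f|I_t|)$ rather than your provisional $2m/(\eta_1\mu_f|I_t|)$), which you correctly flag as the remaining work.
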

Due to the lower bound assumption of  $\nabla_{yy}^2g_i(\x,\y_i;\zeta)$ in Assumption~\ref{assump1}, the error in Hessian approximation can be bounded by bounding  $\delta_{gyy,t}:=\sum_{i\in\cS}\|s_i^t-\nabla_{yy}^2g_i(\x_t,\y_i(\x_t))\|^2$. We prove the following lemma.
\begin{lemma}\label{MA_s_multi_tasks}
Under Assumption~\ref{assump1},\ref{assump2}, considering the momentum method~(\ref{H_update_momentum}) for the update of hessian, with $\beta_1\leq 1$ we have
\begin{equation*}
    \begin{aligned}
    & \sum_{t=0}^T\E[\delta_{gyy,t}]\leq \frac{4m\delta_{gyy,0}}{|I_t|\beta_1} +32L_{gyy}^2\sum_{t=0}^{T-1}\E[\delta_{y,t}] +\frac{8m\beta_1T\sigma^2}{B} +\frac{32m^3L_{gyy}^2(1+C_y^2)\eta_0^2}{|I_t|^2\beta_1^2}\sum_{t=0}^{T-1}\E[\|\z_{t+1}\|^2].
    \end{aligned}
\end{equation*}
\end{lemma}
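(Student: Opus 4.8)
The plan is to mirror the strategy used for Lemmas~\ref{lem_y_M_new} and \ref{alpha_update_sgd_new}: derive a one-step contraction for the per-block error and then sum a geometric recursion. Write $G_i^t := \nabla_{yy}^2 g_i(\x_t,\y_i(\x_t))$ for the moving target and $e_i^t := s_i^t - G_i^t$, so that $\delta_{gyy,t} = \sum_{i\in\cS}\|e_i^t\|^2$. First I would record the one-step recursion by splitting on block selection: for $i\in I_t$ the momentum update in~(\ref{H_update_momentum}) gives $e_i^{t+1} = (1-\beta_1)e_i^t + \beta_1\big(\nabla_{yy}^2 g_i(\x_t,\y_i^t;\cB_i^t) - G_i^t\big) + (G_i^t - G_i^{t+1})$, while for $i\notin I_t$ it reduces to $e_i^{t+1} = e_i^t + (G_i^t - G_i^{t+1})$. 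In both cases the term $G_i^t - G_i^{t+1}$ is the drift of the target caused by the move $\x_{t+1} = \x_t - \eta_0\z_{t+1}$.

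Next I would bound this drift. Since $\nabla_{yy}^2 g_i$ is $L_{gyy}$-Lipschitz in $(\x,\y)$ and $\y_i(\cdot)$ is $C_y$-Lipschitz, splitting $\|G_i^t - G_i^{t+1}\|^2$ into its $\x$- and $\y$-parts and using $\x_{t+1}-\x_t = -\eta_0\z_{t+1}$ yields $\|G_i^t - G_i^{t+1}\|^2 \leq 2L_{gyy}^2(1+C_y^2)\eta_0^2\|\z_{t+1}\|^2$, which explains the $(1+C_y^2)\eta_0^2\|\z_{t+1}\|^2$ factor in the stated bound. To control the updated blocks I would take the conditional expectation over the batch $\cB_i^t$: because the stochastic Hessian is unbiased for $\nabla_{yy}^2 g_i(\x_t,\y_i^t)$ with variance $\sigma^2/|\cB_i^t|$ by Assumption~\ref{assump2}, the zero-mean noise decouples and produces the variance term $\beta_1^2\sigma^2/|\cB_i^t|$, while its mean contributes the bias $\|\nabla_{yy}^2 g_i(\x_t,\y_i^t) - G_i^t\|^2 \leq L_{gyy}^2\|\y_i^t - \y_i(\x_t)\|^2$, again by Lipschitzness of the Hessian. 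Applying convexity of $\|\cdot\|^2$ to the convex combination $(1-\beta_1)e_i^t + \beta_1(\cdot)$ then turns the first piece into $(1-\beta_1)\|e_i^t\|^2 + \beta_1 L_{gyy}^2\|\y_i^t-\y_i(\x_t)\|^2$.

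I would then use Young's inequality with a parameter of order $\beta_1$ to peel the drift term away from the contraction term, keeping its $\|\z_{t+1}\|^2$ factor, and do likewise for the unselected blocks. Taking expectation over the sampling of $I_t$ introduces the marginal selection probability $|I_t|/m$ in front of the block-update terms, and summing over all $i\in\cS$ (noting $\sum_i\|\y_i^t-\y_i(\x_t)\|^2 = \delta_{y,t}$) produces a recursion of the schematic form $\E[\delta_{gyy,t+1}] \leq \big(1 - c\,\beta_1|I_t|/m\big)\E[\delta_{gyy,t}] + c'\beta_1 L_{gyy}^2\E[\delta_{y,t}] + c''\beta_1^2|I_t|\sigma^2/|\cB_i^t| + (\text{drift coefficient})\,\E[\|\z_{t+1}\|^2]$ for absolute constants. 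Summing this geometric recursion from $t=0$ to $T$ inverts the contraction factor $\sim\beta_1|I_t|/m$ into the prefactor $m/(|I_t|\beta_1)$, delivering each of the four stated terms after collecting constants.

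The main obstacle is the bookkeeping around the moving target combined with the block randomness. Two points need care. First, $G_i^{t+1}$ is itself random given the filtration at step $t$, since it depends on $\z_{t+1}$ and hence on the current batches; I must therefore isolate the drift term via Young's inequality \emph{before} taking the conditional expectation over $\cB_i^t$, so that the zero-mean Hessian-noise cross terms vanish cleanly. Second, the Young parameters must be tied to $\beta_1$ so that the resulting $1/\rho \sim m/(\beta_1|I_t|)$ factor on the drift, after summation and inversion of the contraction, produces exactly the claimed $m^3 L_{gyy}^2(1+C_y^2)\eta_0^2/(|I_t|^2\beta_1^2)$ coefficient rather than a weaker power. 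Making the contraction rate and these constants line up across the geometric sum is the delicate part; the remaining estimates are routine consequences of Assumptions~\ref{assump1} and~\ref{assump2}.
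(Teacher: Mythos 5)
Your proposal is correct and follows essentially the same route as the paper, which proves this lemma by deferring to Lemma 10 of \cite{qiuNDCG2022} — precisely the contraction-plus-target-drift recursion you describe, parallel to the paper's own detailed proof of Lemma~\ref{alpha_update_sgd_new}. Your attention to taking Young's inequality before the conditional expectation over $\cB_i^t$ (so the zero-mean Hessian noise decouples despite $G_i^{t+1}$ depending on the current batches) and your derivation of the $2L_{gyy}^2(1+C_y^2)\eta_0^2\|\z_{t+1}\|^2$ drift bound are exactly the steps that make the stated constants come out.
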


It then follows the convergence theorem for Algorithm~\ref{alg}.
\begin{theorem}\label{thm:many_mo_infor}
Under Assumption~\ref{assump1}, \ref{assump2} and with a proper settings of parameters $\eta_1,\eta_2,\beta_1=\cO\left(B\epsilon^2\right)$,  $\beta_0=\cO\left( \min\{|I_t|,B\}\epsilon^2\right)$ and $\eta_0=\cO\left(\min\left\{\min\{|I_t|,B\}\epsilon^2,\frac{B|I_t|\epsilon^2}{m}\right\}\right)$, Algorithm~\ref{alg} ensures that after $T=\cO\left( \max\left\{\frac{m}{|I_t|B\epsilon^4},\frac{1}{\min\{|I_t|,B\} \epsilon^4} \right\}\right)$ iterations we can find an $\epsilon$-stationary solution of $F(\x)$, i.e., $\E[\|\nabla F(\x_\tau)\|^2]\leq \epsilon^2$ for a randomly selected $\tau\in\{0,\ldots,T\}$.
% \vspace{-0mm}
\end{theorem}

%On the other hand, Neumann series method directly gives a biased estimation for the hessian inverse with bounded variance. Lemma 4.3 in \cite{99401} shows that the variance diminishes exponentially as the number of stochastic hessians, i.e., $k_t$. %Note that here we can see the reason that Neumann series method causes the independence of the complexity from the batch sizes. Unlike the momentum method, which {\color{red}consists of a linear combination of historical oracle outputs}, the Neumann series estimation accuracy does not depend on the  size of sampled blocks, i.e, $|I_t|$.
%It then follows the corresponding convergence theorem.
%\begin{theorem}[With Neumann series method]\label{thm:many_neu_infor}
%Under Assumption~\ref{assump1},\ref{assump2} and proper settings of parameters $k_t= \cO\left(\log \epsilon\right)$, $\eta_1,\eta_2 =\cO\left(B\epsilon^2\right)$, $\beta_0=\cO\left(\epsilon^2\right)$, $\eta_0=\cO\left( \min\left\{\epsilon^2,\frac{B|I_t|\epsilon^2}{m}\right\}\right)$, Algorithm~\ref{alg} ensures that after $T=\cO\left( \max\left\{\frac{m}{B|I_t|\epsilon^4 },\frac{1}{\epsilon^4 }\right\}\right)$ iterations we can find an $\epsilon$-stationary solution of $F(\x)$, i.e., $\E[\|\nabla F(\x_\tau)\|^2]\leq \epsilon^2$ for a randomly selected $\tau\in\{0,\ldots,T\}$.
%\vspace{-0mm}
%\end{theorem}
\vspace*{-0.08in}
\subsubsection{Convergence analysis of Algorithm~\ref{alg3}}
\vspace*{-0.08in}
Similarly, to bound the gap between the true gradient $\nabla F(\x_t)$ and its estimator $\z^{t+1}$ in Algorithm~\ref{alg3}, we aim to bound the following three errors, $\delta_{y,t}$, $\delta_{\alpha,t}$ and $\sum_{i\in \cS}\|\v_i^t-[\nabla^2_{yy}g_i(\x_t,\y_i(\x_t))]^{-1}\nabla_y f_i(\x_t,\alpha_i(\x_t),\y_i(\x_t))\|^2=:\delta_{\v,t}$. We follow the same strategy for $\delta_{y,t}$ and $\delta_{\alpha,t}$ to what has been discussed in the previous section. To deal with $\delta_{\v,t}$, we first note that by its construction, $\gamma_i(v,\x,\y_i)$ is $\mu_g$-strongly convex and $L_g$-smooth with respect to $v$. At a result, similarly to Lemma~\ref{alpha_update_sgd_new}, one may bound the error of the estimators $\v_i^t$.
% \begin{lemma}\label{lem:v_update}
% Consider the updates for $\v_i^t$ in Algorithm~\ref{alg3}, under Assumption~\ref{assump1}, \ref{assump2}, with $\eta_3 \leq \min\left\{\frac{\mu_\gamma}{L_\gamma^2}, \frac{1}{\mu_\gamma},\frac{4m}{ \mu_\gamma|I_t|}\right\}$, we have
% \begin{equation*}
%     \begin{aligned}
%     \sum_{t=0}^T\E_t[\delta_{\v,t}]\leq \frac{4m}{\eta_3 \mu_g |I_t|}\delta_{\v,0} +\frac{24 L_g^2}{\mu_g^2}\sum_{t=0}^{T-1}\E[\delta_{y,t}+\delta_{\alpha,t}] +\frac{8m\mu_g\eta_3 \sigma^2T}{B}+\frac{32m^3C_v^2\eta_0^2}{\eta_3^2\mu_g^2|I_t|^2}\sum_{t=0}^{T-1}\E[\|\z_{t+1}\|^2].
%     \end{aligned}
% \end{equation*}
% \end{lemma}
Then it follows the convergence theorem for Algorithm~\ref{alg3}.
\begin{theorem}\label{thm:many_new_infor}
Considering Algorithm~\ref{alg3}, under Assumption~\ref{assump1}, \ref{assump2} and with a proper settings of parameters $\eta_1,\eta_2,\eta_3=\cO\left(B\epsilon^2\right)$,  $\beta_0=\cO\left( \min\{|I_t|,B\}\epsilon^2\right)$ and $\eta_0=\cO\left(\min\left\{\min\{|I_t|,B\}\epsilon^2,\frac{B|I_t|\epsilon^2}{m}\right\}\right)$, Algorithm~\ref{alg} ensures that after $T=\cO\left( \max\left\{\frac{m}{|I_t|B\epsilon^4},\frac{1}{\min\{|I_t|,B\} \epsilon^4} \right\}\right)$ iterations we can find an $\epsilon$-stationary solution of $F(\x)$, i.e., $\E[\|\nabla F(\x_\tau)\|^2]\leq \epsilon^2$ for a randomly selected $\tau\in\{0,\ldots,T\}$.
\vspace{-0mm}
\end{theorem}

\textbf{Remark.} In Theorem~\ref{thm:many_mo_infor} and Theorem~\ref{thm:many_new_infor}, there is no condition on the sizes of data batches $\cB_i^t,\tilde{\cB}_i^t$ nor block batch $I_t$ for the algorithm to converge. Hence,  their sizes can be as small as one. %An exception is that   Theorem~\ref{thm:many_neu_infor} has a requirement on the batch size $k_t$ for estimating the hessian inverse. For both hessian update methods,
The order of complexity is $\cO(1/\epsilon^4)$, which  matches the optimal complexity for nonconvex optimization under a general unbiased stochastic oracle model. In addition, there is parallel speed up by increasing batch sizes for data samples and task samples  due to the scaling in terms of $|I_t|$ and $B$ in the iteration complexity. %has a better dependence on $|I_t|$ and $B$ in the second term of the iteration complexity, i.e., $\frac{1}{\min\{|I_t|,B\} \epsilon^4}$ is better than $\frac{1}{ \epsilon^4}$ in Theorem~\ref{thm:many_neu_infor}.

\vspace*{-0.05in}
\section{Applications in Multi-task Deep (Partial) AUC Maximization}\label{sec:app}
\vspace*{-0.05in}
In this section, we present two applications of multi-block min-max bilevel optimization: multi-task deep AUC maximization and deep partial AUC maximization. (partial) AUC is a performance measure of classifiers for imbalanced data. Recent studies have shown great success of deep AUC maximization in various domains (e.g., medical image classification and molecular property prediction) \cite{liu2019stochastic,https://doi.org/10.48550/arxiv.2012.01981,DBLP:journals/corr/abs-2012-03173}. However, efficient algorithms for multi-task deep (partial) AUC maximization have not been well developed. For multi-task deep AUC maximization, we solve an existing formulation by our algorithm. For multi-task deep partial AUC maximization, we propose a new bilevel formulation and solve it by our algorithm. 
\vspace*{-0.05in}
\subsection{Muti-task Deep AUC maximization}
\vspace*{-0.05in}
Following the previous work~\cite{liu2019stochastic,DBLP:journals/corr/abs-2012-03173}, deep AUC maximization problem can be formulated as a non-convex strongly concave min-max optimization problem $\min_{\w,a,b}\max_{\alpha\in \mathcal A}L_{\AUC}(\w,a,b,\alpha)$. However, training a deep neural network  from scratch by optimizing AUC loss does not necessarily lead to a good performance\cite{yuan2022compositional}. To address this issue, \cite{yuan2022compositional} proposed a compositional training strategy for deep AUC maximization:
\begin{equation*}
    \min_{\w,a,b}\max_{\alpha\in \R_+}L_{\AUC}(\w-\tilde{\eta}\nabla L_{\CE}(\w),a,b,\alpha),
\end{equation*}
where $L_{\CE}$ denotes the cross-entropy loss.  The outer objective remains to be the AUC loss, while the inner objective is a gradient descent step of minimizing the traditional cross-entropy loss. This method has shown superior performance on various datasets \cite{yuan2022compositional}. We  extend this formulation to multi-task problems and reformulate it into a multi-block min-max bilevel  optimization:
\begin{equation*}
    \begin{aligned}
    & \min_{(\w_l,\w_h), \mathbf a, \mathbf b} \max_{\balp\in\R^m_+} \sum_{i=1}^m L_{\AUC}(\u^i(\w_l,\w_h),a^i,b^i,\alpha^i)\\
    & \text{s.t. } \u^i(\w_l,\w_h)=\argmin_{\u^i}   \frac{1}{2}\left\|\u^i-((\w_l,\w_h^i)-\tilde{\eta}\nabla L_{\CE}(\w_l,\w_h^i))\right\|^2,
    \end{aligned}
\end{equation*}
where  $\w_l$ denotes the weight for the encoder network that is shared for all tasks, and $\w_h=(\w_h^1,\dots,\w_h^m)$ denote the task-owned classification heads. The upper objective is strongly concave in terms of dual variables $\alpha_i$ and the lower level objective is strongly convex in terms of $\u^i$. The hessian of the lower-level objective is the identity matrix. Hence, there is no need to track and estimate the hessian matrix. 

\vspace*{-0.1in}
\subsection{Multi-task Deep Partial AUC Maximization}
\vspace*{-0.05in}
Some real-world applications (e.g., medical diagnosis~\cite{10.2307/2670280}) cannot tolerate a model with a high False Positive Rate (FPR) even though it has significant performance in AUC. Hence, a measure of interest is one-way partial AUC (pAUC), which puts a restriction on the range of FPR (i.e., FPR$\in [\rho_l,\rho]$, where $0\le \rho_l\le \rho\le 1$). Below, we focus on the case $\rho_l=0$. However, our method can be easily extended for handling $\rho_l>0$.  Let $\mathcal D_+, \mathcal D_-$ denote the set of positive and negative data for a binary classification task, respectively. Let $\mathcal D_-[K]$ denote the top-K negative examples according to their prediction scores. Let $n_+,n_-$ denote the number of positive and negative samples respectively. Then we have partial AUC optimization with a pairwise square loss formulated as following~\cite{DBLP:journals/corr/abs-2203-15046}: 
\begin{align*}
    \min_{\w} \frac{1}{n_+}\sum_{\x_i\in\mathcal D_+}\frac{1}{n_-\rho}\sum_{\x_j\in\mathcal D_-[K]}(h_\w(\x_j) - h_\w(\x_i)+c)^2,
\end{align*}
where $K=n_-\rho$, $c$ is a constant and $h_\w(\cdot)$ denotes the prediction score on a data. A key challenge for solving the above problem is to deal with the non-differentiable top-K selector $\x_j\in\mathcal D_-[K]$, which depends on the model parameters $\w$. This challenge has been recently tackled in~\cite{pmlr-v139-yang21k,DBLP:journals/corr/abs-2203-00176}. We focus on the comparison with the first work as it is optimization oriented similar to ours and also has the state-of-the-art performance.  They formulate the problem into either a weakly convex minimization or approximate it by a smooth objective in a compositional form. A caveat of their algorithms (named SOPA, SOPA-s) is that they need to maintain and update $n_+$ auxiliary variables with one for each positive data. If we apply their algorithms for multi-task problems, one needs to maintain and update $\sum_{i=1}^mn^i_+$ auxiliary variables, which could dramatically slow down the convergence. 

To address this problem, we first transform it into a min-max optimization problem.  Let $a(\w) = \frac{1}{n_+}\sum_{\x_i\in\mathcal D_+}h_\w(\x_i)$ and $b(\w) = \frac{1}{n_-\rho}\sum_{\x_j\in\mathcal D_-[K]}h_\w(\x_j)$. 
Then we can write the problem as (cf. Appendix B for a derivation)
\begin{align*}
        \min_{\w, a, b}\frac{1}{n_+}\sum_{\x_i\in\mathcal D_+}(h_\w(\x_i) - a)^2 + \frac{1}{n_-\rho}\sum_{\x_j\in\mathcal D_-}\I(\x_j\in\cD_-[K])(h_\w(\x_j) - b)^2 + (b(\w) - a(\w)+c)^2.
\end{align*}
To tackle non-continuous non-differentiable indicator function $\I(\cdot)$ we can replace it by a sigmoid function. To tackle non-differentiability  of the top-K selector $\x_j\in\cD_-[K]$, we follow~\cite{qiuNDCG2022} and formulate it as lower-level optimization problem, i.e., $\x_j\in\cD_-[K]$ is equivalent to $h_\w(\x_j)> \lambda(\w)$, where $\lambda(\w)$ represents the $K+1$-th largest scores among all negative examples, which can be approximated by a solution from a smooth strongly convex minimization problem as following:
%\begin{equation*}
%\begin{aligned}
%F(\w,a,b,\alpha)& :=\E_\cS[f(\w,a,b,\alpha;\z)]\\
%& :=\E_\cS\bigg[ \frac{K(1-p)}{n_-}(a-h_\w(x))^2\I_{[y=1]} +p(b-h_\w(x))^2\psi(h_\w(x)-\widehat{\lambda}(\w))I_{[y=-1]}\\
%&\quad  +\frac{K}{n_-}  2\alpha \left(p(1-p)-(1-p)h_\w(x)\I_{[y=1]}+\frac{p(1-p)n^+ +n_-}{K}h_\w(x)\psi(h_\w(x)-\widehat{\lambda}(\w))I_{[y=-1]}\right)\\
%&\quad-p(1-p)\frac{K}{n_-} \alpha^2 \bigg
%],
%\end{aligned}
%\end{equation*}
%where $\lambda(\w)$ is the $(K+1)$-th largest value among $\{h_\w(x)\}_{\y=-1}$. A simple choice for $\phi$ is $\psi(s)=\max\{s,0\}^2$

% \begin{equation*}
%     \lambda(\w) = \arg\min_{\lambda}\frac{K+\varepsilon}{n_-}\lambda +\frac{1}{n_-}\sum_{\x\in\cD_-}(h_\w(\x) -\lambda)_+.
% \end{equation*}
% We smooth the problem as 
\begin{align*}
    &\lambda(\w)  = \argmin_{\lambda\in\R}L(\lambda, \w): = \frac{K+\varepsilon}{n_-}\lambda+ \frac{\tau_2}{2}\lambda^2+\frac{1}{n_-}\sum_{\x\in\cD_-}\tau_1\ln(1+\exp((h_\w(\x)-\lambda)/\tau_1)),
\end{align*}
where $\varepsilon, \tau_1, \tau_2$ are small constants. Based on above,  the multi-task deep partial AUC minimization problem can be formulated as a multi-block min-max bilevel optimization problem give by:
% \begin{align*}
%         &\min_{\w, a, b}\frac{1}{n_+}\sum_{\x_i\in\mathcal D_+}(h_\w(\x_i) - a)^2 + \frac{1}{n_-\beta}\sum_{\x_j\in\mathcal D_-}\phi(h_\w(\x_j) - \hat{\lambda}(\w))(h_\w(\x_j) - b)^2\\
%         &+\max_{\alpha} 2\alpha \left(\frac{1}{n_-\beta}\sum_{\x_j\in\mathcal D_-}\phi(h_\w(\x_j) - \hat{\lambda}(\w))h_\w(\x_j) - \frac{1}{n_+}\sum_{\x_i\in\mathcal D_+}h_\w(\x_i)+c\right) - \alpha^2 \\
%         &s.t., \hat{\lambda}(\w)  = \argmin_{\lambda}L(\lambda, \w): = \frac{K+\varepsilon}{n_-}\lambda+ \frac{\tau_2}{2}\lambda^2+\frac{1}{n_-}\sum_{\x_j\in\mathcal D_-}\tau_1\ln(1+\exp((h_\w(\x_j)-\lambda)/\tau_1))
% \end{align*}
%We consider multi-task partial AUC maximization, which is then given by
\begin{align*}
        &\min_{\w,  \mathbf a, \mathbf b}\max_{\balp\in\R^m}\sum_{k=1}^m\Bigg\{\frac{1}{n^k_+}\sum_{\x_i\in\mathcal D^k_+}(h_\w(\x_i;k) - a_k)^2 + \frac{1}{n^k_-\rho}\sum_{\x_j\in\mathcal D^k_-}\phi(h_\w(\x_j; k) - {\lambda}_k(\w))(h_\w(\x_j; k) - b_k)^2\\
        &+ 2\alpha_k \Bigg(\frac{1}{n_-^k\rho}\sum_{\x_j\in\mathcal D_-^k}\phi(h_\w(\x_j; k) - {\lambda}_k(\w))h_\w(\x_j; k) - \frac{1}{n^k_+}\sum_{\x_i\in\mathcal D^k_+}h_\w(\x_i; k)+c\Bigg) - \alpha_k^2 \Bigg\}\\
        &{\lambda}_k(\w)  = \argmin_{\lambda\in\R}L_k(\lambda, \w): = \frac{K+\varepsilon}{n_-}\lambda+ \frac{\tau_2}{2}\lambda^2+\frac{1}{n_-}\sum_{\x_j\in\mathcal D^k_-}\tau_1\ln(1+\exp((h_\w(\x_j; k)-\lambda)/\tau_1)),
\end{align*}
where $\mathcal D^k_{+/-}$ denote the positive/negative data set of the $k$-th task, $h_\w(\x; k)$ denote the prediction score for the $k$-th classifier, and $\phi(s)= \frac{1}{1+\exp(-s)}$ is the sigmoid function. The upper-level objective is strongly concave in terms of $\alpha_k$ and the lower-level objective is strongly convex in terms of $\lambda_k$. 

We develop a tailored algorithm based on Algorithm~\ref{alg} for solving the above formulation of multi-task pAUC maximization as shown in Algorithm~\ref{algoM} in Appendix~\ref{pauc_append}. For the hessian update, the momentum update~(\ref{H_update_momentum}) is efficient due to that the each lower-level problem is only one-dimensional. For simplicity of implementation, we define a loss $G^{t+1}$ \ref{pauc_G} in the Appendix~\ref{pauc_append}, on which auto-differentiation can be directly applied for computing a gradient estimator.  We refer readers to Appendix~\ref{pauc_append} for a detailed explanation and derivation of $G^{t+1}$. 
% $\nabla\phi(h_\w(\x_j; k) - \lambda_k(\w)) = \sigma(h_\w(\x_j; k) - \lambda_k(\w))(1-\sigma(h_\w(\x_j; k) - \lambda_k(\w))(\nabla h_\w(\x_j;k) -\nabla \lambda_k(\w))$, where $\nabla \lambda_k(\w) = - (\nabla_{\lambda \lambda}^2L_k(\lambda, \w))^{-1}\nabla_{\w \lambda}^2L_k(\lambda, \w)$. Let $H_k=(\nabla_{\lambda \lambda}^2L_k(\lambda, \w))$, and 
% % $\nabla_{\w \lambda}^2L_k(\lambda, \w ) = \frac{1}{n_-}\sum_{\x_j\in\mathcal D^k_-}\frac{\exp((h_\w(\x_j; k)-\lambda_k)/\tau_1)}{1+\exp((h_\w(\x_j; k)-\lambda_k)/\tau_1)})\nabla h_\w(\x_j; k)$.
% \begin{equation*}
%     \begin{aligned}
%     \nabla_{\w \lambda}^2L_k(\lambda, \w ) &=\nabla_\w \left[ \nabla_\lambda L_k(\lambda, \w )\right]\\
%     &=\nabla_\w \left(\frac{K+\varepsilon}{n_-}+ \tau_2\lambda-\frac{1}{n_-}\sum_{\x_j\in\mathcal D^k_-}\frac{\exp((h_\w(\x_j; k)-\lambda)/\tau_1)}{1+\exp((h_\w(\x_j; k)-\lambda)/\tau_1)}\right)\\
%     &= -\frac{1}{n_-}\sum_{\x_j\in\mathcal D^k_-}\nabla_\w\left(\frac{\exp((h_\w(\x_j; k)-\lambda)/\tau_1)}{1+\exp((h_\w(\x_j; k)-\lambda)/\tau_1)}\right)
%     \end{aligned}
% \end{equation*}

\vspace*{-0.05in}
\section{Experiments}\label{sec:exp}
\vspace*{-0.05in}
%\vspace*{-0.1in}In this section, we conduct experiments on the two applications mentioned in the previous section and show the effectiveness of our methods. %We also perform ablation study to verify the convergence analysis of the proposed algorithm. 
\subsection{Multi-task Deep AUC Maximization with Compositional Training}\label{experiment_auc}
\vspace*{-0.05in}
%We compare the muli-task compositional training method against training for AUC loss minimization from scratch on four datasets, CIFAR100, CheXpert, CelebA and ogbg-molpcba. 

\textbf{Data.} We use four datasets, namely CIFAR100, CheXpert, CelebA and ogbg-molpcba. CIFAR-100 \cite{Krizhevsky2009LearningML} is an image dataset consisting of $60,000$ $32\times 32$ color images in $100$ classes. Hence, there are 100 tasks for CIFAR100. We follow $45,000/5,000/10,000$ split to construct training/validation/testing datasets. CelebA \cite{liu2015faceattributes} is a large-scale face attributes dataset with more than 200K celebrity images, each with 40 attribute annotations (i.e., 40 tasks). We use the recommended training/validation/testing split as  $162,770/19,866/19,961$. CheXpert \cite{irvin2019chexpert} is a dataset that contains 224,316 chest radiographs with 14 observations. Since the official testing dataset is not open to public, we take the official validation set as the testing data, and take the last 1000 images in the training dataset for validation. Due to the absence of positive samples for the observation \textit{Fracture} in the testing dataset, we ignore this label and only consider the rest 13 observations (i.e., 13 tasks). The last dataset ogbg-molpcba is a molecular property prediction graph dataset \cite{https://doi.org/10.48550/arxiv.2005.00687}. It consists of 437,929 graphs with 128 labels (i.e., 128 tasks). We follow scaffold splitting procedure as recommended in \cite{https://doi.org/10.48550/arxiv.1703.00564}.

\vspace*{-0.03in}
\textbf{Models.} We use ResNet18 \cite{DBLP:conf/cvpr/HeZRS16} for CIFAR-100 and CelebA, and ImageNet pretrained DenseNet121 \cite{huang2017densely} for CheXpert. For ogbg-molpcba, we use Graph Isomorphism Network (GIN) \cite{https://doi.org/10.48550/arxiv.1810.00826}.

\vspace*{-0.03in}
\textbf{Setup.} We compare our method for optimizing the multi-task AUC maximization with compositional training denoted by mAUC-CT (ours) with a baseline that directly optimizes multi-task min-max AUC loss denoted by mAUC (baseline). We do not compare with other straightforward baselines (e.g., optimizing the CE loss and the focal loss) since they have been shown to be inferior than AUC maximization methods for imbalanced data in many previous works~\cite{DBLP:journals/corr/abs-2012-03173,DBLP:journals/corr/abs-2203-00176}.  For both methods,  the learning rates $\eta_2,\eta_1,\eta_0$ are set to be the same and tuned in $ \{0.01,0.03,0.05,0.07,0.1\}$. The learning rates decay by a factor of $10$ at the $4$th and $30$th epoch for CheXpert and CelebA,  respectively. No learning rate decay is applied for CIFAR-100 and ogbg-molpcba. The moving average parameter $\beta_0$ and $\tilde{\eta}$ in the lower level problem of mAUC-CT (ours) are tuned in $\{0.1,0.5,0.9\}$. Regarding the task sampling, for datasets CIFAR-100 and ogbg-molpcba, $10$ tasks are sampled to be updated in each iteration, and for each sampled task, we independently sample a data batch of size $128$. For the other two datasets with fewer tasks, CheXpert and CelebA,  we sample one task at each iteration. The batch size for data samples is $32$ for CheXpert, and $128$ for CelebA. We run both methods the same number of epochs which varies on different data, 2000 epochs for CIFAR100, 6 epochs for CheXpert, 40 epochs for CelebA and 100 epochs for obgb-molpcba. %These number are selected to ensure the validation performance saturate. 

\vspace*{-0.03in}
\textbf{Results.} In Table~\ref{table_auc}, we report the testing AUC score with the model selected according to the best performance on validation datasets. Comparing with optimizing AUC loss directly, mAUC-CT (ours) achieves better performance on all tested datasets.  We show the results of an ablation study in Figure~\ref{ablation}, which  verifies convergence has a parallel speed-up effect on both the batch sizes of data samples  and task samples. The algorithm converges faster as either the data or task sample batch size increases. In Figure~\ref{training_auc} (left two), we compare our method with the baseline in terms of convergence speed on the training data of two datasets, which demonstrate that our method converges faster. More results on training convergence are included in the appendix. 

\begin{table}
\caption{The testing AUC scores on four datasets.}
\label{table_auc}
\centering
\scalebox{0.9}{
\begin{tabular}{ccccc}
\hline
Method\textbackslash{}DataSet &  CIFAR100 & CheXpert & CelebA & ogbg-molpcba \\ \hline
\multicolumn{1}{c}{mAUC (baseline)} & 0.9044 (0.0015)  & 0.8084( 0.1455) & 0.9062 (0.0042)& 0.7793(0.0028)\\ \hline
\multicolumn{1}{c}{mAUC-CT (ours)} & \textbf{0.9272 (0.0014)} & \textbf{0.8198(0.1495)}  &   \textbf{0.9192 (0.0004)}& \textbf{0.8406(0.0044)}\\ \hline
\end{tabular}}
\vspace{0in}
\end{table}

% \vspace{0.4in}
\begin{figure}
  \centering
    \begin{subfigure}[b]{0.245\textwidth}
        \includegraphics[width=\textwidth]{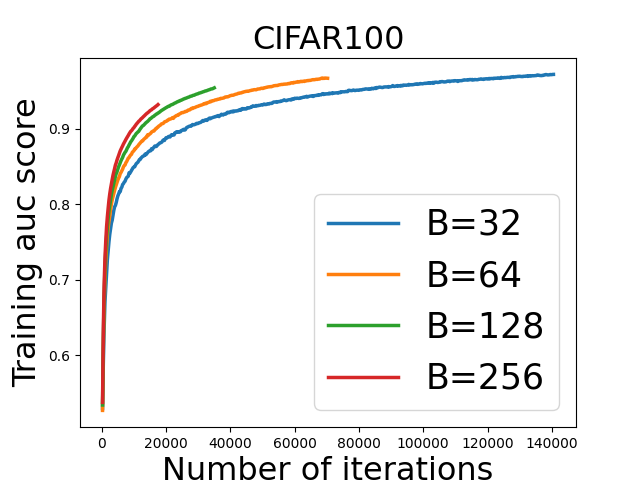}
        \caption{Varying $B$}
    \end{subfigure}
    \begin{subfigure}[b]{0.245\textwidth}
        \includegraphics[width=\textwidth]{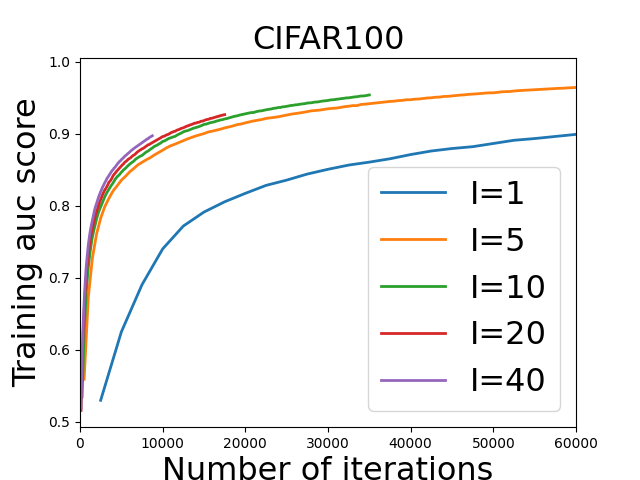}
        \caption{Varying $I$}
    \end{subfigure}
    \begin{subfigure}[b]{0.245\textwidth}
        \includegraphics[width=\textwidth]{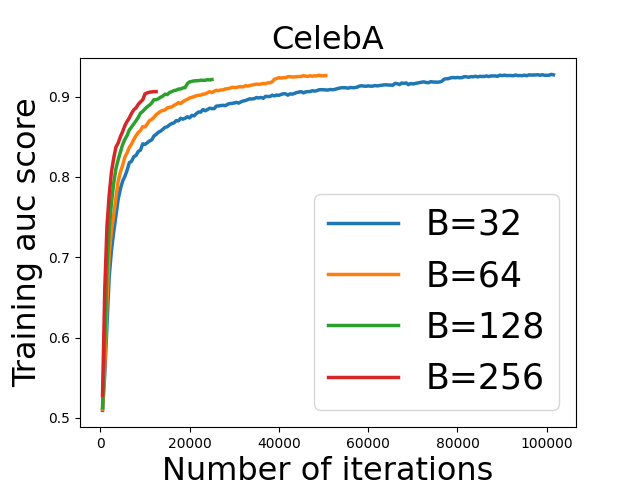}
        \caption{Varying $B$}
    \end{subfigure}
    \begin{subfigure}[b]{0.245\textwidth}
        \includegraphics[width=\textwidth]{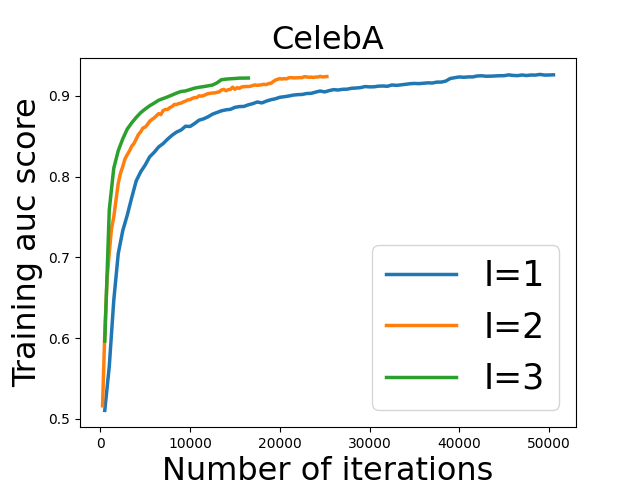}
        \caption{Varying $I$}
    \end{subfigure}
     \caption{Convergence of our method vs data sample batch sizes $B$ and vs task sample batch size $I:=|I_t|$  for multi-task deep AUC maximization.}
     \label{ablation}
    \vspace*{0in}
    \centering
    %\begin{subfigure}[b]{0.24\textwidth}
    %    \includegraphics[width=\textwidth]{}
        % \caption{CIFAR100}
    %\end{subfigure}
    \begin{subfigure}[b]{0.245\textwidth}
        \includegraphics[width=\textwidth]{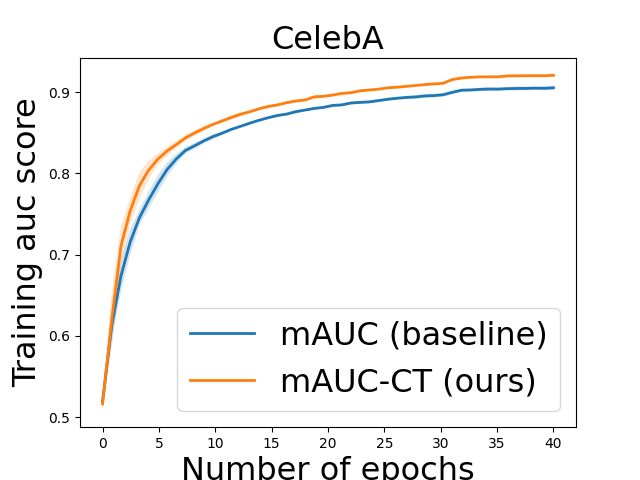}
        % \caption{CelebA}
    \end{subfigure}
    %\begin{subfigure}[b]{0.24\textwidth}
    %    \includegraphics[width=\textwidth]{}
        % \caption{CheXpert}
    %\end{subfigure}
    \begin{subfigure}[b]{0.245\textwidth}
        \includegraphics[width=\textwidth]{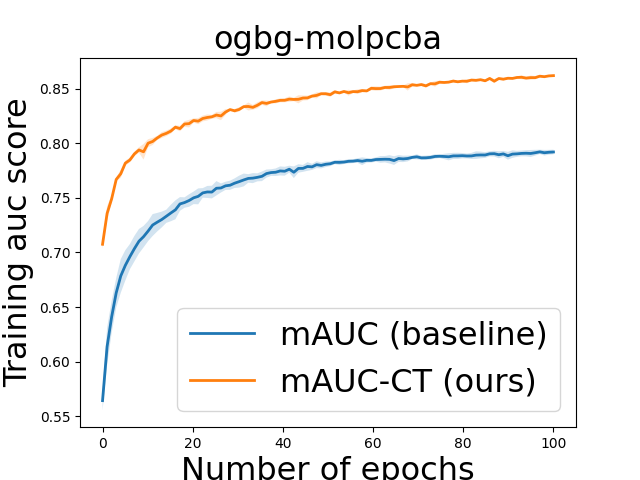}
        % \caption{ogbg-molpcba}
    \end{subfigure}
    \begin{subfigure}[b]{0.245\textwidth}
        \includegraphics[width=\textwidth]{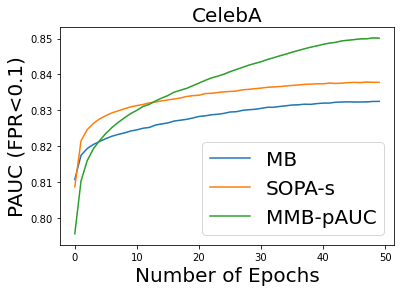}
       % \caption{CelebA}
    \end{subfigure}
        \begin{subfigure}[b]{0.24\textwidth}
        \includegraphics[width=\textwidth]{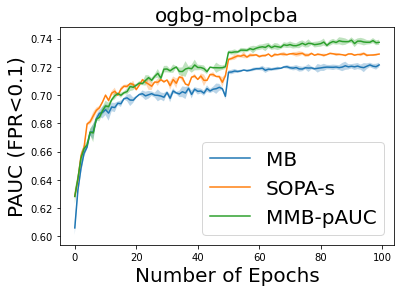}
        %\caption{ogbg-molpcba}
    \end{subfigure}
     \caption{Comparison of Convergence on training data for multi-task deep AUC maximization (left two) and multi-task deep pAUC maximization (right two) on the CelebA and ogbg-molpcba datasets. }
     \label{training_auc}
     \vspace*{-0.2in}
%\end{figure}
%\begin{figure}[t]
\end{figure}
\vspace*{-0.05in}
\subsection{Multi-task Deep Partial AUC Maximization}
\vspace*{-0.05in}
%\textbf{Datasets.} %For this task, we conduct our experiment on five datasets: the four datasets used in section~\ref{experiment_auc} and another graph dataset ogbg-moltox21 \cite{https://doi.org/10.48550/arxiv.2005.00687}. This dataset contains qualitative toxicity measurements for 8014 compounds on 12 different tasks, and split it into  training,  validation  and  test sets,  following  a  80/10/10 ratio.  
%For this task, we conduct our experiment on two types of datasets: image datasets and graph datasets. For image datasets, we use CIFAR100, CelebA and CheXpert. For graph datasets, we choose ogbg-moltox21 and ogbg-molpcba, which are collected from the Stanford Open Graph Benchmark (OGB) webs.  
%For comparison, We compare our method with SOPA, Cross Entropy (CE) Loss, Focal Loss and  the naive mini-batch based method (MB) for optimizing partial AUC.
%\vspace*{-0.1in}
\textbf{Setup.} For this task, we use the same datasets and the same networks for the image datasets and graph datasets as in the previous subsection. % Regrading of the training model, we use ResNet18 pretrained by CE loss for CIFAR100 and CelebA, ImageNet pre-trained DenseNet121 for Chexpert, and GIN for both graph datasets.
%For CIFAR100 and CelebA, we use ResNet18 as base model, and we pre-train it with CE loss. For CheXpert, we use pre-trained DenseNet121 as our base model. For graph datasets, we use GIN as our back bone model, which has 5 mean-pooling layers with 64 number of hidden units and dropout ratio as $0.5$.
For baselines, we compare with a naive mini-batch based method (MB) for pAUC maximization~\cite{10.5555/2968826.2968904}, and a state-of-the-art pAUC maximization method SOPA-s~\cite{DBLP:journals/corr/abs-2203-00176}. Following previous works~\cite{DBLP:journals/corr/abs-2012-03173,DBLP:journals/corr/abs-2203-00176}, for pAUC maximization methods we use a pretrained encoder network by optimizing the CE loss as the initial encoder and learn the whole network by maximizing pAUC.  We also report the performance of optimizing the CE loss for a refernece. For all methods, the learning rate is tuned in $\{0.0001,0.0005,0.001,0.005,0.01\}$. The hyperparameters selection of MMB-pAUC are: $\eta_1$ and $\eta_2\in \{0.5,0.1,0.01\}$, $\beta_1 \in \{0.99,0.9,0.5,0.1,0.01\}$ and $\beta_0\in \{0.9,0.99\}$. For Focal loss we select gamma from $\{1,2,4\}$ and alpha from $\{0.25,0.5,0.75\}$. The momentum parameters in SOPA-s are tuned in the same range and their $\lambda$ parameter in $\{0.1, 1, 10\}$ as in~\cite{DBLP:journals/corr/abs-2203-00176}.  The margin parameter in the surrogate loss (e.g., $c$) is set to be $1$.  Regarding the task sampling, we sample one task at each iteration for ogbg-molpcba and CheXpert, sample 10 tasks for CIFAR100, and sample 4 tasks for CelebA. The data sample batch size is 32 for CheXpert, and 64 for others. For smaller datasets (CIFAR100 and ogbg-molpcba), we run 100 epochs for each, and we decay the learning rate by a factor of $10$ at the 50-th epoch. For larger datasets (CelebA, CheXpert), we run 50 and 5 epochs respectively. 

\vspace*{-0.05in}
\textbf{Results.} The partial AUC scores with FPR$\le0.1$ on the testing data of different methods are shown in Table~\ref{table2}. From the results, we can see that our methods perform better than baseline methods with a significant margin. In Figure~\ref{training_auc} (right two), we compare our method with the baselines in terms of convergence speed on the training data of two datasets, which demonstrate that our method converges faster. More results on training convergence are included in the appendix. 
% \begin{table}[t]
% \vspace*{-0.15in}
% \centering
% \caption{The testing partial AUC scores on the four datasets.}% "-" means not applicable, since our model is pre-trained on CE for CIFAR100 and CelebA }
% \label{table2}
% \scalebox{0.9}{
% \begin{tabular}{ccccc}
% \hline
% Method\textbackslash{}DataSet &  CIFAR100 & CelebA & CheXpert & ogbg-molpcba \\ \hline
% \multicolumn{1}{c}{CE} & 0.8895 (0.0009)  & 0.8024 (0.0026) & 0.6606 (0.0159) & 0.6576 (0.0010)\\ \hline
% %\multicolumn{1}{c}{Focal} & - & -  &   0.6646 (0.0132)& 0.6151 (0.0020)\\ \hline
% \multicolumn{1}{c}{MB} & 0.9188 (0.0006) & 0.8304 (0.0005)  & 0.6759 (0.0160)  & 0.7213 (0.0018)\\ \hline
% \multicolumn{1}{c}{SOPA-s} & 0.9251 (0.0003) & 0.8336 (0.0001)  &   0.6682 (0.0156)&0.7290 (0.0019)\\ \hline
% \multicolumn{1}{c}{Ours} & {\bf 0.9262 (0.0005)} & {\bf 0.8360 (0.0003)}  &   {\bf 0.6827 (0.0183)}&{\bf 0.7374 (0.0015)}\\ \hline
% \end{tabular}}
%  \vspace{0.1in}
% \end{table}
\begin{table}[H]
\vspace*{-0.15in}
\centering
\caption{The testing partial AUC scores on the four datasets.}% "-" means not applicable, since our model is pre-trained on CE for CIFAR100 and CelebA }
\label{table2}
\scalebox{0.9}{
\begin{tabular}{ccccc}
\hline
Method\textbackslash{}DataSet &  CIFAR100 & CelebA & CheXpert & ogbg-molpcba \\ \hline
\multicolumn{1}{c}{CE} & 0.8895 (0.0009)  & 0.8024 (0.0026) & 0.6606 (0.0159) & 0.6576 (0.0010)\\ \hline
\multicolumn{1}{c}{Focal} & 0.8966 (0.0007) &   0.8064 (0.0011)&   0.6646 (0.0132)& 0.6453 (0.0021)\\ \hline
\multicolumn{1}{c}{MB} & 0.9188 (0.0006) & 0.8304 (0.0005)  & 0.6759 (0.0160)  & 0.7213 (0.0018)\\ \hline
\multicolumn{1}{c}{SOPA-s} & 0.9251 (0.0003) & 0.8336 (0.0001)  &   0.6682 (0.0156)&0.7290 (0.0019)\\ \hline
\multicolumn{1}{c}{Ours} & {\bf 0.9262 (0.0005)} & {\bf 0.8360 (0.0003)}  &   {\bf 0.6827 (0.0183)}&{\bf 0.7374 (0.0015)}\\ \hline
\end{tabular}}
 \vspace{-0.1in}
\end{table}

\vspace*{-0.1in}
\section{Conclusion and Future Work}
\vspace*{-0.1in}
We have developed two simple single loop randomized stochastic algorithms for solving multi-block min-max bilevel optimization problems. These algorithms require updates for only constant number of blocks in each iteration. We showed that both of them achieve an oracle complexity of $\cO(\epsilon^{-4})$, which matches the optimal complexity order for solving stochastic nonconvex optimization under a general unbiased stochastic oracle model. %\noindent\textbf{Future work.} 
%Even though the complexity order in terms of $\epsilon$ has reached the optimality, the complexity dependency on the strong convex/concave constants $\mu_g,\mu_f$ remains to be discussed.
% One limitation of the proposed algorithm is that the computation of hessian inverse estimator could be high for high dimensional problems. This issue has been addressed and we refer interested readers to Appendix D.  
At the same time, we hope our work inspires others to find more novel applications of our idea.

% \begin{figure}
%     \centering
%     % \begin{subfigure}[b]{0.24\textwidth}
%     %     \includegraphics[width=\textwidth]{pauc_figures/graph1_train.png}
%     %     \caption{ogbg-moltox21}
%     % \end{subfigure}
%     \begin{subfigure}[b]{0.24\textwidth}
%         \includegraphics[width=\textwidth]{pauc_figures/graph1_train.png}
%         \caption{ogbg-molpcba}
%     \end{subfigure}
%     \begin{subfigure}[b]{0.24\textwidth}
%         \includegraphics[width=\textwidth]{}
%         \caption{CIFAR100}
%     \end{subfigure}
%     \begin{subfigure}[b]{0.24\textwidth}
%         \includegraphics[width=\textwidth]{pauc_figures/celeba_train.png}
%         \caption{CelebA}
%     \end{subfigure}
%     \caption{Convergence on training datasets for multi-task deep partial AUC maximization.}
% \end{figure}
\vspace*{-0.05in}
\section*{Acknowledgments and Disclosure of Funding}
\vspace*{-0.05in}
This work is partially supported by NSF awards 2147253, 2110545, 1844403, and Amazon research award.

\bibliographystyle{plain}
\bibliography{all,ref}

\begin{thebibliography}{10}

\bibitem{2019lowerarjevani}
Yossi Arjevani, Yair Carmon, John~C Duchi, Dylan~J Foster, Nathan Srebro, and
  Blake Woodworth.
\newblock Lower bounds for non-convex stochastic optimization.
\newblock {\em arXiv preprint arXiv:1912.02365}, 2019.

\bibitem{10.2307/2670280}
Stuart~G. Baker and Paul~F. Pinsky.
\newblock A proposed design and analysis for comparing digital and analog
  mammography: Special receiver operating characteristic methods for cancer
  screening.
\newblock {\em Journal of the American Statistical Association},
  96(454):421--428, 2001.

\bibitem{BERNHARD19951163}
Pierre Bernhard and Alain Rapaport.
\newblock On a theorem of danskin with an application to a theorem of von
  neumann-sion.
\newblock {\em Nonlinear Analysis: Theory, Methods \& Applications},
  24(8):1163--1181, 1995.

\bibitem{DBLP:journals/corr/abs-2102-04671}
Tianyi Chen, Yuejiao Sun, and Wotao Yin.
\newblock A single-timescale stochastic bilevel optimization method.
\newblock {\em arXiv preprint arXiv:2102.04671}, 2021.

\bibitem{chen2021tighter}
Tianyi Chen, Yuejiao Sun, and Wotao Yin.
\newblock Tighter analysis of alternating stochastic gradient method for
  stochastic nested problems, 2021.

\bibitem{cutkosky2019momentum}
Ashok Cutkosky and Francesco Orabona.
\newblock Momentum-based variance reduction in non-convex {SGD}.
\newblock In {\em Advances in Neural Information Processing Systems 32
  (NeurIPS)}, pages 15236--15245, 2019.

\bibitem{https://doi.org/10.48550/arxiv.2201.13409}
Mathieu Dagréou, Pierre Ablin, Samuel Vaiter, and Thomas Moreau.
\newblock A framework for bilevel optimization that enables stochastic and
  global variance reduction algorithms, 2022.

\bibitem{99401}
Saeed Ghadimi and Mengdi Wang.
\newblock Approximation methods for bilevel programming.
\newblock {\em arXiv preprint arXiv:1802.02246}, 2018.

\bibitem{GuMinmaxbilevel2022}
Alex Gu, Songtao Lu, Parikshit Ram, and Lily Weng.
\newblock Min-max bilevel multi-objective optimization with applications in
  machine learning, 2022.

\bibitem{GuoSVRB}
Zhishuai Guo, Quanqi Hu, Lijun Zhang, and Tianbao Yang.
\newblock Randomized stochastic variance-reduced methods for multi-task
  stochastic bilevel optimization.
\newblock {\em arXiv preprint arXiv:2105.02266}, 2021.

\bibitem{https://doi.org/10.48550/arxiv.2104.14840}
Zhishuai Guo, Yi~Xu, Wotao Yin, Rong Jin, and Tianbao Yang.
\newblock A novel convergence analysis for algorithms of the adam family and
  beyond, 2021.

\bibitem{DBLP:journals/corr/abs-2104-14840}
Zhishuai Guo, Yi~Xu, Wotao Yin, Rong Jin, and Tianbao Yang.
\newblock On stochastic moving-average estimators for non-convex optimization.
\newblock {\em CoRR}, abs/2104.14840, 2021.

\bibitem{DBLP:conf/cvpr/HeZRS16}
Kaiming He, Xiangyu Zhang, Shaoqing Ren, and Jian Sun.
\newblock Deep residual learning for image recognition.
\newblock In {\em {CVPR}}, pages 770--778. {IEEE} Computer Society, 2016.

\bibitem{DBLP:journals/corr/abs-2007-05170}
Mingyi Hong, Hoi-To Wai, Zhaoran Wang, and Zhuoran Yang.
\newblock A two-timescale framework for bilevel optimization: Complexity
  analysis and application to actor-critic.
\newblock {\em arXiv preprint arXiv:2007.05170}, 2020.

\bibitem{https://doi.org/10.48550/arxiv.2005.00687}
Weihua Hu, Matthias Fey, Marinka Zitnik, Yuxiao Dong, Hongyu Ren, Bowen Liu,
  Michele Catasta, and Jure Leskovec.
\newblock Open graph benchmark: Datasets for machine learning on graphs, 2020.

\bibitem{DBLP:journals/corr/abs-2008-08170}
Feihu Huang, Shangqian Gao, Jian Pei, and Heng Huang.
\newblock Accelerated zeroth-order momentum methods from mini to minimax
  optimization.
\newblock {\em arXiv preprint arXiv:2008.08170}, 2020.

\bibitem{huang2017densely}
Gao Huang, Zhuang Liu, Laurens Van Der~Maaten, and Kilian~Q Weinberger.
\newblock Densely connected convolutional networks.
\newblock In {\em Proceedings of the IEEE conference on computer vision and
  pattern recognition}, pages 4700--4708, 2017.

\bibitem{irvin2019chexpert}
Jeremy Irvin, Pranav Rajpurkar, Michael Ko, Yifan Yu, Silviana Ciurea-Ilcus,
  Chris Chute, Henrik Marklund, Behzad Haghgoo, Robyn Ball, Katie Shpanskaya,
  et~al.
\newblock Chexpert: A large chest radiograph dataset with uncertainty labels
  and expert comparison.
\newblock In {\em Proceedings of the AAAI Conference on Artificial
  Intelligence}, volume~33, pages 590--597, 2019.

\bibitem{DBLP:journals/corr/abs-2010-07962}
Kaiyi Ji, Junjie Yang, and Yingbin Liang.
\newblock Provably faster algorithms for bilevel optimization and applications
  to meta-learning.
\newblock {\em arXiv preprint arXiv:2010.07962}, 2020.

\bibitem{10.5555/2968826.2968904}
Purushottam Kar, Harikrishna Narasimhan, and Prateek Jain.
\newblock Online and stochastic gradient methods for non-decomposable loss
  functions.
\newblock In {\em Proceedings of the 27th International Conference on Neural
  Information Processing Systems - Volume 1}, NIPS'14, page 694–702,
  Cambridge, MA, USA, 2014. MIT Press.

\bibitem{khanduri2021nearoptimal}
Prashant Khanduri, Siliang Zeng, Mingyi Hong, Hoi-To Wai, Zhaoran Wang, and
  Zhuoran Yang.
\newblock A near-optimal algorithm for stochastic bilevel optimization via
  double-momentum, 2021.

\bibitem{Krizhevsky2009LearningML}
Alex Krizhevsky.
\newblock Learning multiple layers of features from tiny images.
\newblock 2009.

\bibitem{https://doi.org/10.48550/arxiv.2112.04660}
Junyi Li, Bin Gu, and Heng Huang.
\newblock A fully single loop algorithm for bilevel optimization without
  hessian inverse, 2021.

\bibitem{https://doi.org/10.48550/arxiv.2205.01608}
Junyi Li, Feihu Huang, and Heng Huang.
\newblock Local stochastic bilevel optimization with momentum-based variance
  reduction, 2022.

\bibitem{lin2019gradient}
Tianyi Lin, Chi Jin, and Michael~I Jordan.
\newblock On gradient descent ascent for nonconvex-concave minimax problems.
\newblock {\em arXiv preprint arXiv:1906.00331}, 2019.

\bibitem{liu2019stochastic}
Mingrui Liu, Zhuoning Yuan, Yiming Ying, and Tianbao Yang.
\newblock Stochastic {AUC} maximization with deep neural networks.
\newblock In {\em 8th International Conference on Learning Representations
  (ICLR)}, 2020.

\bibitem{liu2015faceattributes}
Ziwei Liu, Ping Luo, Xiaogang Wang, and Xiaoou Tang.
\newblock Deep learning face attributes in the wild.
\newblock In {\em Proceedings of International Conference on Computer Vision
  (ICCV)}, December 2015.

\bibitem{arXiv:2001.03724}
Luo Luo, Haishan Ye, and Tong Zhang.
\newblock Stochastic recursive gradient descent ascent for stochastic
  nonconvex-strongly-concave minimax problems.
\newblock {\em CoRR}, abs/2001.03724, 2020.

\bibitem{qiuNDCG2022}
Zi-Hao Qiu, Quanqi Hu, Yongjian Zhong, Lijun Zhang, and Tianbao Yang.
\newblock Large-scale stochastic optimization of ndcg surrogates for deep
  learning with provable convergence, 2022.

\bibitem{hassan18nonconvexmm}
Hassan Rafique, Mingrui Liu, Qihang Lin, and Tianbao Yang.
\newblock Non-convex min-max optimization: Provable algorithms and applications
  in machine learning.
\newblock {\em CoRR}, abs/1810.02060, 2018.

\bibitem{TranDinhHybrid}
Quoc Tran-Dinh, Deyi Liu, and Lam~M. Nguyen.
\newblock Hybrid variance-reduced sgd algorithms for minimax problems with
  nonconvex-linear function.
\newblock In {\em Advances in Neural Information Processing Systems 33
  (NeurIPS)}, 2020.

\bibitem{wang2017stochastic}
Mengdi Wang, Ethan~X Fang, and Han Liu.
\newblock Stochastic compositional gradient descent: algorithms for minimizing
  compositions of expected-value functions.
\newblock {\em Mathematical Programming}, 161(1-2):419--449, 2017.

\bibitem{https://doi.org/10.48550/arxiv.2012.01981}
Zhengyang Wang, Meng Liu, Youzhi Luo, Zhao Xu, Yaochen Xie, Limei Wang, Lei
  Cai, Qi~Qi, Zhuoning Yuan, Tianbao Yang, and Shuiwang Ji.
\newblock Advanced graph and sequence neural networks for molecular property
  prediction and drug discovery, 2020.

\bibitem{https://doi.org/10.48550/arxiv.1703.00564}
Zhenqin Wu, Bharath Ramsundar, Evan~N. Feinberg, Joseph Gomes, Caleb Geniesse,
  Aneesh~S. Pappu, Karl Leswing, and Vijay Pande.
\newblock Moleculenet: A benchmark for molecular machine learning, 2017.

\bibitem{https://doi.org/10.48550/arxiv.1810.00826}
Keyulu Xu, Weihua Hu, Jure Leskovec, and Stefanie Jegelka.
\newblock How powerful are graph neural networks?, 2018.

\bibitem{YanOptimalEpoch}
Yan Yan, Yi~Xu, Qihang Lin, Wei Liu, and Tianbao Yang.
\newblock Optimal epoch stochastic gradient descent ascent methods for min-max
  optimization.
\newblock In {\em Advances in Neural Information Processing Systems 33
  (NeurIPS)}, 2020.

\bibitem{DBLP:journals/corr/abs-2203-15046}
Tianbao Yang and Yiming Ying.
\newblock {AUC} maximization in the era of big data and {AI:} {A} survey.
\newblock {\em CoRR}, abs/2203.15046, 2022.

\bibitem{pmlr-v139-yang21k}
Zhiyong Yang, Qianqian Xu, Shilong Bao, Yuan He, Xiaochun Cao, and Qingming
  Huang.
\newblock When all we need is a piece of the pie: A generic framework for
  optimizing two-way partial auc.
\newblock In Marina Meila and Tong Zhang, editors, {\em Proceedings of the 38th
  International Conference on Machine Learning}, volume 139 of {\em Proceedings
  of Machine Learning Research}, pages 11820--11829. PMLR, 18--24 Jul 2021.

\bibitem{yuan2022compositional}
Zhuoning Yuan, Zhishuai Guo, Nitesh Chawla, and Tianbao Yang.
\newblock Compositional training for end-to-end deep {AUC} maximization.
\newblock In {\em International Conference on Learning Representations}, 2022.

\bibitem{DBLP:journals/corr/abs-2012-03173}
Zhuoning Yuan, Yan Yan, Milan Sonka, and Tianbao Yang.
\newblock Robust deep auc maximization: A new surrogate loss and empirical
  studies on medical image classification.
\newblock {\em arXiv preprint arXiv:2012.03173}, 2020.

\bibitem{pmlr-v161-zhang21c}
Siqi Zhang, Junchi Yang, Crist\'{o}bal Guzm\'{a}n, Negar Kiyavash, and Niao He.
\newblock The complexity of nonconvex-strongly-concave minimax optimization.
\newblock In Cassio de~Campos and Marloes~H. Maathuis, editors, {\em
  Proceedings of the Thirty-Seventh Conference on Uncertainty in Artificial
  Intelligence}, volume 161 of {\em Proceedings of Machine Learning Research},
  pages 482--492. PMLR, 27--30 Jul 2021.

\bibitem{DBLP:journals/corr/abs-2203-00176}
Dixian Zhu, Gang Li, Bokun Wang, Xiaodong Wu, and Tianbao Yang.
\newblock When {AUC} meets {DRO:} optimizing partial {AUC} for deep learning
  with non-convex convergence guarantee.
\newblock {\em CoRR}, abs/2203.00176, 2022.

\end{thebibliography}

\newpage
%%%%%%%%%%%%%%%%%%%%%%%%%%%%%%%%%%%%%%%%%%%%%%%%%%%%%%%%%%%%
\section*{Checklist}

\begin{enumerate}

\item For all authors...
\begin{enumerate}
  \item Do the main claims made in the abstract and introduction accurately reflect the paper's contributions and scope?
    \answerYes
  \item Did you describe the limitations of your work?
    \answerYes
  \item Did you discuss any potential negative societal impacts of your work?
    \answerNA{}
  \item Have you read the ethics review guidelines and ensured that your paper conforms to them?
    \answerYes
\end{enumerate}

\item If you are including theoretical results...
\begin{enumerate}
  \item Did you state the full set of assumptions of all theoretical results?
    \answerYes
        \item Did you include complete proofs of all theoretical results?
    \answerYes
\end{enumerate}

\item If you ran experiments...
\begin{enumerate}
  \item Did you include the code, data, and instructions needed to reproduce the main experimental results (either in the supplemental material or as a URL)?
    \answerYes
  \item Did you specify all the training details (e.g., data splits, hyperparameters, how they were chosen)?
    \answerYes
        \item Did you report error bars (e.g., with respect to the random seed after running experiments multiple times)?
    \answerYes
        \item Did you include the total amount of compute and the type of resources used (e.g., type of GPUs, internal cluster, or cloud provider)?
    \answerNA
\end{enumerate}

\item If you are using existing assets (e.g., code, data, models) or curating/releasing new assets...
\begin{enumerate}
  \item If your work uses existing assets, did you cite the creators?
    \answerYes
  \item Did you mention the license of the assets?
    \answerYes
  \item Did you include any new assets either in the supplemental material or as a URL?
    \answerNo
  \item Did you discuss whether and how consent was obtained from people whose data you're using/curating?
    \answerNA
  \item Did you discuss whether the data you are using/curating contains personally identifiable information or offensive content?
    \answerNA
\end{enumerate}

\item If you used crowdsourcing or conducted research with human subjects...
\begin{enumerate}
  \item Did you include the full text of instructions given to participants and screenshots, if applicable?
    \answerNA{}
  \item Did you describe any potential participant risks, with links to Institutional Review Board (IRB) approvals, if applicable?
    \answerNA{}
  \item Did you include the estimated hourly wage paid to participants and the total amount spent on participant compensation?
    \answerNA{}
\end{enumerate}

\end{enumerate}

% \bibliographystyle{plain}
% \bibliography{ref,example_paper,all}

\newpage
\clearpage
\appendix

% \section{Convergence analysis}

\section{Convergence Analysis of Algorithm~\ref{alg}}
First, we present the detailed statements Theorem~\ref{thm:many_mo_infor}.

\begin{theorem}\label{thm:many_mo}
Let $F(\x_0)-F(\x^*)\leq \Delta_F$. Under Assumption~\ref{assump1},\ref{assump2} and consider Algorithm~\ref{alg} with momentum method for Hessian inverse approximation, with $\eta_1\leq \min\left\{\frac{\mu_f}{L_f^2}, \frac{1}{\mu_f},\frac{4m}{ \mu_f|I_t|},\frac{B\epsilon^2}{96C_1\mu_f \sigma^2}\right\}$, $\eta_2\leq\min\left\{\frac{\mu_g}{L_g^2},\frac{2m}{|I_t|\mu_g},\frac{\mu_gB\epsilon^2}{48C_2 \sigma^2}\right\}$, $\beta_1\leq \min\left\{1,\frac{B\epsilon^2}{96C_3\sigma^2}\right\}$, $\beta_0\leq \frac{\min\{|I_t|,B\}\epsilon^2}{12C_4 }$, $\eta_0\leq\min\left\{\frac{1}{2L_/f},\frac{\beta_0}{\sqrt{80}L_F},\frac{\eta_1\mu_f|I_t|}{\sqrt{640}m\sqrt{C_1}C_\alpha},\frac{|I_t|\beta_1}{\sqrt{640}m\sqrt{C_3}L_{gyy}\sqrt{(1+C_y^2)}},\frac{|I_t|\eta_2\mu_g}{\sqrt{160}m\sqrt{C_2}C_y}\right\}$, $T\geq \max\left\{\frac{30\Delta_F}{\eta_0 \epsilon^2},\frac{15\E[\|\nabla F(\x_0)-\z_1\|^2]}{\beta_0\epsilon^2},\frac{60C_1\delta_{\alpha,0}}{\mu_f|I_t|\eta_1\epsilon^2} ,\frac{30C_2\delta_{y,0}}{|I_t|\eta_2\mu_g\epsilon^2}, \frac{60C_3\delta_{gyy,0} }{|I_t|\beta_1\epsilon^2} \right\}$
we have
\begin{equation*}
    \E[\|\nabla F(\x_\tau)\|^2]\leq \epsilon^2,\quad  \E[\|\nabla F(\x_\tau)-\z_{\tau+1})\|^2]<2\epsilon^2,
\end{equation*}
where $\tau$ is randomly sampled from $\{0,\dots,T\}$, $C_1,C_2,C_3,C_4$ are constants defined in the proof, and $L_F$ is the Lipschitz continuity constant of $\nabla F(\x)$.
\end{theorem}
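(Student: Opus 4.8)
The plan is to establish a standard descent-type inequality for the momentum-smoothed iterate and then control the gradient estimation error by assembling the three block-wise error bounds from Lemmas~\ref{lem_y_M_new}, \ref{alpha_update_sgd_new}, and \ref{MA_s_multi_tasks}. First I would use the $L_F$-smoothness of $F$ together with the update $\x_{t+1}=\x_t-\eta_0\z_{t+1}$ to derive the inequality
\begin{equation*}
    F(\x_{t+1})\leq F(\x_t)-\frac{\eta_0}{2}\|\nabla F(\x_t)\|^2-\left(\frac{1}{2\eta_0}-\frac{L_F}{2}\right)\eta_0^2\|\z_{t+1}\|^2+\frac{\eta_0}{2}\|\nabla F(\x_t)-\z_{t+1}\|^2,
\end{equation*}
so that summing over $t$ telescopes $F$ and leaves me needing to bound $\sum_t\E[\|\nabla F(\x_t)-\z_{t+1}\|^2]$ and to absorb the $\|\z_{t+1}\|^2$ terms. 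The constraint $\eta_0\leq \frac{1}{2L_F}$ is precisely what keeps the $\|\z_{t+1}\|^2$ coefficient nonpositive before other terms are added back.

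Next I would analyze the moving-average error $e_t:=\nabla F(\x_t)-\z_{t+1}$ directly. Splitting $\nabla F(\x_t)-\z_{t+1}$ through the intermediate quantity $\nabla F(\x_t,\balp^t,\y^t)$ as suggested in the excerpt, I would write $\z_{t+1}=(1-\beta_0)\z_t+\beta_0\Delta^{t+1}$ and expand $\E[\|\nabla F(\x_t)-\z_{t+1}\|^2]$ using the recursion, exploiting that $\Delta^{t+1}$ is an unbiased (up to the Hessian-inverse bias captured in $\nabla F(\x_t,\balp^t,\y^t)$) estimator. The standard momentum estimate yields a recursion of the form
\begin{equation*}
    \E[\|e_{t+1}\|^2]\leq (1-\beta_0)\E[\|e_t\|^2]+\frac{2L_F^2}{\beta_0}\E[\|\x_{t+1}-\x_t\|^2]+2\beta_0^2\frac{\sigma^2}{\min\{|I_t|,|\cB_i^t|\}}+2\beta_0\,\E[\|\nabla_x F(\x_t,\balp^t,\y^t)-\nabla F(\x_t)\|^2],
\end{equation*}
and the last bias term is controlled by the smoothness constants times $\delta_{y,t}+\delta_{\alpha,t}+\delta_{gyy,t}$. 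Summing this recursion over $t$ and dividing by $\beta_0$ converts it into a bound on $\sum_t\E[\|e_t\|^2]$ in terms of the cumulative errors $\sum_t\E[\delta_{y,t}]$, $\sum_t\E[\delta_{\alpha,t}]$, $\sum_t\E[\delta_{gyy,t}]$, plus a $\frac{\sigma^2}{\beta_0}$ noise contribution and a $\frac{L_F^2\eta_0^2}{\beta_0^2}\sum_t\E[\|\z_{t+1}\|^2]$ drift term.

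The main obstacle, and the crux of the whole argument, is the circular coupling: each of Lemmas~\ref{lem_y_M_new}, \ref{alpha_update_sgd_new}, and \ref{MA_s_multi_tasks} bounds its cumulative error in terms of $\sum_t\E[\|\z_{t+1}\|^2]$ (and, for $\delta_\alpha$ and $\delta_{gyy}$, in terms of $\sum_t\E[\delta_{y,t}]$), while the descent inequality needs those errors to bound progress, and the error recursion feeds $\sum_t\E[\|\z_{t+1}\|^2]$ back in. The plan is to substitute the three lemmas into the $e_t$ bound, collect all the $\sum_t\E[\|\z_{t+1}\|^2]$ coefficients, and then choose the learning rates $\eta_0,\eta_1,\eta_2,\beta_0,\beta_1$ small enough—exactly the five upper bounds listed in the theorem—so that the aggregate coefficient of $\sum_t\E[\|\z_{t+1}\|^2]$ coming from the errors is dominated by the negative $\|\z_{t+1}\|^2$ term retained from the descent step. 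This is where the constants $C_1,C_2,C_3,C_4$ enter: they package the smoothness/Lipschitz factors so that conditions such as $\eta_0\leq \frac{\eta_1\mu_f|I_t|}{\sqrt{640}m\sqrt{C_1}C_\alpha}$ make each cross term a small fraction of the descent term.

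Finally I would combine the telescoped descent inequality with the summed error bound, so that after cancellation the remaining inequality reads $\frac{\eta_0}{2}\sum_{t=0}^T\E[\|\nabla F(\x_t)\|^2]\leq \Delta_F+(\text{initialization terms})+(\text{noise terms})$, where the initialization terms are the $\delta_{\cdot,0}$ and $\E[\|\nabla F(\x_0)-\z_1\|^2]$ contributions and the noise terms scale like $\eta_0 T\sigma^2/(\cdots)$. Dividing by $\frac{\eta_0(T+1)}{2}$ and imposing the stated lower bounds on $T$—each of which forces one of these residual terms below $\epsilon^2$—gives $\frac{1}{T+1}\sum_{t=0}^T\E[\|\nabla F(\x_t)\|^2]\leq\epsilon^2$, which is exactly $\E[\|\nabla F(\x_\tau)\|^2]\leq\epsilon^2$ for uniformly random $\tau$; the companion bound $\E[\|\nabla F(\x_\tau)-\z_{\tau+1}\|^2]<2\epsilon^2$ follows from the same summed error estimate. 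The parameter choices $\eta_1,\eta_2,\beta_1=\cO(|\cB_i^t|\epsilon^2)$, $\beta_0=\cO(\min\{|I_t|,|\cB_i^t|\}\epsilon^2)$, $\eta_0=\cO(\cdots)$ then yield the $T=\cO(1/\epsilon^4)$ rate with the stated dependence on $m$, $|I_t|$, and $|\cB_i^t|$.
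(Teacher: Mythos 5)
Your proposal follows essentially the same route as the paper's proof: the descent inequality from $L_F$-smoothness (the paper's Lemma~\ref{lemma:1}), the momentum-error recursion for $\|\nabla F(\x_t)-\z_{t+1}\|^2$ decomposed through the surrogate $\nabla F(\x_t,\balp^t,\y^t)$ into a bias term controlled by $\delta_{y,t}+\delta_{\alpha,t}+\delta_{gyy,t}$ and a variance term of order $C_4/\min\{|I_t|,|\cB_i^t|\}$, followed by substituting Lemmas~\ref{lem_y_M_new}, \ref{alpha_update_sgd_new}, \ref{MA_s_multi_tasks}, absorbing the aggregated $\sum_t\E[\|\z_t\|^2]$ coefficient into the negative descent term via the step-size conditions, and finally choosing $T$ to suppress the initialization and noise residuals. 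The plan and the way it resolves the circular coupling are the same as the paper's argument, so it is correct in approach and matches the stated conclusion.
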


To prove Theorem~\ref{thm:many_mo}, we need the following Lemmas.

\begin{lemma}\label{MlemLFsm}
Under Assumption~\ref{assump1} and \ref{assump2}, $F(x)$ is $L_F$-smooth for some constant $L_F\in \R$.
\end{lemma}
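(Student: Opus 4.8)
The plan is to establish $L_F$-smoothness of the implicit objective $F(\x)=\frac{1}{m}\sum_{i\in\cS}f_i(\x,\alpha_i(\x),\y_i(\x))$ by showing each summand $F_i(\x):=f_i(\x,\alpha_i(\x),\y_i(\x))$ has a Lipschitz-continuous gradient, so that the average inherits a smoothness constant $L_F$. Since the gradient is $\nabla F_i(\x)=\nabla_x f_i(\x,\alpha_i(\x),\y_i(\x)) - \nabla^2_{xy}g_i(\x,\y_i(\x))[\nabla^2_{yy}g_i(\x,\y_i(\x))]^{-1}\nabla_y f_i(\x,\alpha_i(\x),\y_i(\x))$, I would bound $\|\nabla F_i(\x)-\nabla F_i(\x')\|$ in terms of $\|\x-\x'\|$ term by term.

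First I would record the auxiliary Lipschitz properties of the implicit solution maps. The map $\y_i(\x)$ is Lipschitz in $\x$ with constant $C_y:=C_{gxy}/\mu_g$ (standard from the optimality condition $\nabla_y g_i(\x,\y_i(\x))=0$, implicit differentiation, and the bounds $\|\nabla^2_{xy}g_i\|\le C_{gxy}$, $\nabla^2_{yy}g_i\succeq\mu_g I$ in Assumption~\ref{assump1}). Similarly, using that $f_i$ is $\mu_f$-strongly concave in $\alpha_i$ and $L_f$-smooth, the maximizer map $\alpha_i(\x)=\argmax_{\alpha_i}f_i(\x,\alpha_i,\y_i(\x))$ is Lipschitz in $\x$ with a constant $C_\alpha$ that combines $L_f/\mu_f$ with the Lipschitzness of $\y_i(\x)$. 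These two facts let me treat the composite arguments $(\x,\alpha_i(\x),\y_i(\x))$ as a single Lipschitz function of $\x$.

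Next I would bound the difference of each factor appearing in $\nabla F_i$. For the first term, $\nabla_x f_i$ is $L_f$-Lipschitz jointly in its arguments, so composing with the Lipschitz maps $\alpha_i(\cdot),\y_i(\cdot)$ gives a Lipschitz bound. For the product term I would use the standard telescoping trick: write the difference of products $A B c - A' B' c'$ (with $A=\nabla^2_{xy}g_i$, $B=[\nabla^2_{yy}g_i]^{-1}$, $c=\nabla_y f_i$) as a sum of three differences, each isolating one changing factor, and bound each using boundedness of the factors ($\|A\|\le C_{gxy}$, $\|B\|\le 1/\mu_g$, $\|c\|\le C_f$) together with their Lipschitz moduli. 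The only slightly delicate piece is the Lipschitz continuity of the Hessian inverse $B=[\nabla^2_{yy}g_i(\x,\y_i(\x))]^{-1}$: I would use the identity $B-B' = B'(\nabla^2_{yy}g_i' - \nabla^2_{yy}g_i)B$ together with the spectral bounds $\|B\|,\|B'\|\le 1/\mu_g$ and the $L_{gyy}$-Lipschitzness of $\nabla^2_{yy}g_i$ (in both arguments, composed with the Lipschitz map $\y_i(\cdot)$) to get a Lipschitz constant of order $L_{gyy}/\mu_g^2$ for $B$.

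I expect the main obstacle to be purely bookkeeping: assembling the various constants ($C_f, L_f, L_g, C_{gxy}, L_{gxy}, L_{gyy}, \mu_f, \mu_g, C_y, C_\alpha$) correctly through the product rule and the composition with the implicit maps, rather than any conceptual difficulty — the existence of a finite $L_F$ follows once all the individual pieces are shown Lipschitz and bounded. Since the statement only asserts existence of \emph{some} constant $L_F$, I would not optimize the constant and would simply collect all contributions into a single $L_F$, noting that the same $L_F$ works for the average over $i\in\cS$ because the bounds are uniform in $i$ under Assumption~\ref{assump1}.
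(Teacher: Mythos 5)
Your proposal is correct and follows essentially the same route as the paper: the paper's proof likewise bounds $\|\nabla F(\x_1)-\nabla F(\x_2)\|$ by invoking the Lipschitzness of the implicit maps $\y_i(\cdot)$ and $\alpha_i(\cdot)$ (its Lemma~\ref{lemLip_M}, which gives $C_y=L_g/\mu_g$ rather than your $C_{gxy}/\mu_g$ — an immaterial bookkeeping difference), telescoping the triple product $\nabla^2_{xy}g_i\,[\nabla^2_{yy}g_i]^{-1}\nabla_y f_i$ into per-factor differences, and handling the Hessian inverse via the spectral bound $\mu_g^{-1}$ and $L_{gyy}$-Lipschitzness, yielding a constant of exactly the form you describe. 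No gaps.
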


\begin{lemma}\label{lemma:1}
Consider the update $\x_{t+1}= \x_t-\eta_0 \z_{t+1}$. Then under Assumption~\ref{assump1}, with $ \eta_0 L_F\leq \frac{1}{2}$, we have
\begin{equation*}
F(\x_{t+1})\leq F(\x_t)+\frac{\eta_0}{2}\|\nabla F(\x_t)-\z_{t+1}\|^2-\frac{\eta_0}{2}\|\nabla F(\x_t)\|^2-\frac{\eta_0}{4}\|\z_{t+1}\|^2.
\end{equation*}
\end{lemma}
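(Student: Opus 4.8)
The plan is to prove this as a standard descent lemma by combining the smoothness bound from Lemma~\ref{MlemLFsm} with the polarization identity. First I would invoke $L_F$-smoothness of $F$ (established in Lemma~\ref{MlemLFsm}), which yields the quadratic upper bound
\begin{equation*}
F(\x_{t+1})\leq F(\x_t)+\langle \nabla F(\x_t),\x_{t+1}-\x_t\rangle+\frac{L_F}{2}\|\x_{t+1}-\x_t\|^2.
\end{equation*}
Substituting the update rule $\x_{t+1}-\x_t=-\eta_0\z_{t+1}$ turns the right-hand side into $F(\x_t)-\eta_0\langle \nabla F(\x_t),\z_{t+1}\rangle+\frac{L_F\eta_0^2}{2}\|\z_{t+1}\|^2$.

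Next I would rewrite the cross term using the polarization identity $-\langle a,b\rangle=\frac{1}{2}\|a-b\|^2-\frac{1}{2}\|a\|^2-\frac{1}{2}\|b\|^2$ with $a=\nabla F(\x_t)$ and $b=\z_{t+1}$. This produces exactly the three squared-norm terms appearing in the statement, namely $\frac{\eta_0}{2}\|\nabla F(\x_t)-\z_{t+1}\|^2$, $-\frac{\eta_0}{2}\|\nabla F(\x_t)\|^2$, and $-\frac{\eta_0}{2}\|\z_{t+1}\|^2$, while leaving the extra curvature term $\frac{L_F\eta_0^2}{2}\|\z_{t+1}\|^2$ from the smoothness step.

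The final step is to absorb that curvature term into the $\|\z_{t+1}\|^2$ coefficient: collecting the two $\|\z_{t+1}\|^2$ contributions gives $-\frac{\eta_0}{2}(1-L_F\eta_0)\|\z_{t+1}\|^2$, and the stepsize condition $\eta_0 L_F\leq\frac{1}{2}$ guarantees $1-L_F\eta_0\geq\frac{1}{2}$, so this term is bounded above by $-\frac{\eta_0}{4}\|\z_{t+1}\|^2$. Chaining these inequalities yields the claimed bound. There is no genuine obstacle here—the result is a deterministic, per-iteration inequality with no expectation or stochasticity involved; the only point requiring a little care is verifying that the stepsize restriction is precisely what is needed to convert the coefficient $\frac{1}{2}(1-L_F\eta_0)$ into the clean $\frac{1}{4}$ used downstream, which is where the hypothesis $\eta_0 L_F\leq\frac{1}{2}$ is consumed.
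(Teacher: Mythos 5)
Your proof is correct and follows exactly the same route as the paper's: apply the $L_F$-smoothness quadratic upper bound, substitute the update, expand the cross term via the polarization identity, and use $\eta_0 L_F\leq \tfrac{1}{2}$ to turn the coefficient $\tfrac{L_F}{2}\eta_0^2-\tfrac{\eta_0}{2}$ into $-\tfrac{\eta_0}{4}$. No gaps.
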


\begin{lemma}\label{lemLip_M}
[Lemma 4.3 \cite{lin2019gradient}] Under Assumption~\ref{assump1}, $\y_i(\x)$ is $C_y=L_g/\mu_g$-Lipschitz-continuous for all $i$. Define $\alpha_i(\x,\y_i):=\argmax_{\alpha_i\in \cA} f_i(\x,\alpha_i,\y_i)$. Then $\alpha_i(\x,\y)$ is $C_\alpha=L_f/\mu_f$-Lipschitz continuous.
\end{lemma}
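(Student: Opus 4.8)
The plan is to establish all three bounds with the same template: each of $\y_i(\cdot)$, $\alpha_i(\cdot)$ and $\v_i(\cdot)$ is the exact optimizer of a parametric strongly convex (resp. concave) problem, so I combine the relevant first-order optimality condition with strong convexity/concavity to extract $\|\cdot\|^2$ on the left, and bound the resulting residual by the appropriate smoothness constant times the change in parameters. The three parts differ only in which constant plays the role of the modulus and which smoothness estimate controls the cross-term.

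For $\y_i(\cdot)$ I would fix two points $\x,\x'$ and use the unconstrained stationarity conditions $\nabla_y g_i(\x,\y_i(\x))=0$ and $\nabla_y g_i(\x',\y_i(\x'))=0$. By $\mu_g$-strong convexity of $g_i$ in $\y_i$, $\mu_g\|\y_i(\x)-\y_i(\x')\|^2\le \langle \nabla_y g_i(\x,\y_i(\x))-\nabla_y g_i(\x,\y_i(\x')),\,\y_i(\x)-\y_i(\x')\rangle$; substituting the two optimality identities rewrites the inner product as $\langle \nabla_y g_i(\x',\y_i(\x'))-\nabla_y g_i(\x,\y_i(\x')),\,\y_i(\x)-\y_i(\x')\rangle$, which is at most $L_g\|\x-\x'\|\,\|\y_i(\x)-\y_i(\x')\|$ by $L_g$-smoothness and Cauchy--Schwarz. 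Cancelling one factor yields $C_y=L_g/\mu_g$.

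The argument for $\alpha_i(\cdot)$ is the same in spirit but, since $\alpha_i$ lives in the convex set $\cA$, I replace stationarity by the variational inequality $\langle \nabla_\alpha f_i(\x,\alpha_i(\x,\y),\y),\,\alpha-\alpha_i(\x,\y)\rangle\le 0$ for all $\alpha\in\cA$. Writing this at two input pairs, testing each with the other's maximizer and adding gives a monotonicity inequality; splitting the gradient difference into a same-parameter piece (controlled by $\mu_f$-strong concavity) and a same-point piece (controlled by $L_f$-smoothness in $(\x,\y)$) gives $\mu_f\|\alpha_i(\x,\y)-\alpha_i(\x',\y')\|\le L_f\|(\x,\y)-(\x',\y')\|$, hence $C_\alpha=L_f/\mu_f$. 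For $\v_i(\cdot)$, which is the unconstrained minimizer of the $\mu_g$-strongly convex quadratic $\gamma_i$, I reuse the first template with $\nabla_v\gamma_i(\v_i,\cdot)=0$; the only new ingredient is bounding $\|\nabla_v\gamma_i(v,\x',\alpha',\y')-\nabla_v\gamma_i(v,\x,\alpha,\y)\|$ for fixed $v$, which I handle by expanding $\nabla_v\gamma_i=\nabla^2_{yy}g_i(\x,\y)v-\nabla_y f_i(\x,\alpha,\y)$ and invoking Lipschitz continuity of $\nabla^2_{yy}g_i$ and of $\nabla_y f_i$ together with the a priori bound $\|v\|\le\Gamma$.

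I expect the constrained case to be the only genuinely delicate step: one must pair and add the two variational inequalities in the correct orientation and then carefully split the gradient difference so that exactly one piece is absorbed by strong concavity and the other by smoothness. The two unconstrained parts follow the displayed template almost verbatim, with strong concavity replaced by strong convexity and the cross-term estimate adjusted to the relevant Lipschitz constants.
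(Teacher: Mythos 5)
Your proposal is correct and is essentially the standard argument; the paper itself gives no proof of this lemma at all, merely citing Lemma~4.3 of \cite{lin2019gradient}, and that reference proves the claim by exactly the template you describe (first-order optimality / variational inequality of the parametric strongly convex or concave problem, strong monotonicity to extract the squared distance, joint smoothness to control the cross term, then cancel one factor). Your handling of the constrained case for $\alpha_i$ --- pairing the two variational inequalities, adding, and splitting the gradient difference into a same-parameter piece absorbed by $\mu_f$-strong concavity and a same-point piece bounded by $L_f$ --- is the correct and standard way to do it, and the unconstrained cases for $\y_i$ and $\v_i$ go through verbatim as you wrote them.

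One small discrepancy worth flagging: for $\v_i$ your argument does not literally produce the constant $C_v=L_g/\mu_g$ stated in the lemma. Expanding $\nabla_v\gamma_i(v,\x',\alpha',\y')-\nabla_v\gamma_i(v,\x,\alpha,\y)$ at fixed $v$ with $\|v\|\le\Gamma=C_f/\mu_g$ gives a bound of the form $(L_{gyy}\Gamma+L_f)\|(\x,\alpha,\y)-(\x',\alpha',\y')\|$, hence $C_v=(L_{gyy}C_f/\mu_g+L_f)/\mu_g$ rather than $L_g/\mu_g$. This is a looseness in the paper's statement of the lemma (the same constant is obtained if one instead differences $[\nabla^2_{yy}g_i]^{-1}\nabla_y f_i$ directly), not a flaw in your reasoning; the downstream analysis only uses that $\v_i$ is Lipschitz with some absolute constant, so nothing breaks.
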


\begin{proof}[Proof of Theorem~\ref{thm:many_mo}]
First, recall and define the following notations
\begin{equation*}
    \begin{aligned}
    &\nabla F(\x_t) = \frac{1}{m} \sum_{i\in \cS}\nabla_x f_i(\x_t,\alpha_i(\x_t),\y_i(\x_t))-\nabla^2_{xy}g_i(\x_t,\y_i(\x_t))[\nabla^2_{yy}g_i(\x_t,\y_i(\x_t))]^{-1}\nabla_y f_i(\x_t,\alpha_i(\x_t),\y_i(\x_t))\\
    &  \nabla F(\x_t,\balp^t,\y^t):=\frac{1}{m}\sum_{i\in \cS}\nabla F_i(\x_t,\alpha_i^t,\y_i^t) := \nabla_x f_i(\x_t,\alpha_i^t,\y_i^t)-\nabla^2_{xy}g_i(\x_t,\y_i^t)\E_t[H_i^t]\nabla_y f_i(\x_t,\alpha_i^t,\y_i^t)\\
    & \Delta^{t+1}=\frac{1}{|I_t|}\sum_{i\in I_t} \Delta^{t+1}_i:=\nabla_x f_i(\x_t,\alpha_i^t,\y_i^t;\cB_i^t)- \nabla_{xy}^2g_i(\x_t,\y_i^t;\tilde{\cB}_i^t)H_i^t\nabla_y f_i(\x_t,\alpha_i^t,\y_i^t;\cB_i^t)
    \end{aligned}
\end{equation*}
Consider the update $\z_{t+1}=(1-\beta_0)\z_t+\beta_0 \Delta^{t+1}$ in Algorithm~\ref{alg}, we have
\begin{equation}\label{ineq:1}
    \begin{aligned}
    &\E_t[\|\nabla F(\x_t)-\z_{t+1}\|^2]\\
    &=\E_t[\|\nabla F(\x_t)-(1-\beta_0)\z_t-\beta_0 \Delta^{t+1}\|^2]\\
    &=\E_t[\|(1-\beta_0)(\nabla F(\x_{t-1})-\z_t)+(1-\beta_0)(\nabla F(\x_t)-\nabla F(\x_{t-1}))+\beta_0( \nabla F(\x_t)-\nabla F(\x_t,\balp^t,\y^t))\\
    &\quad +\beta_0( \nabla F(\x_t,\balp^t,\y^t)-\Delta^{t+1})\|^2]\\
    &\stackrel{(a)}{=}\|(1-\beta_0)(\nabla F(\x_{t-1})-\z_t)+(1-\beta_0)(\nabla F(\x_t)-\nabla F(\x_{t-1}))+\beta_0( \nabla F(\x_t)-\nabla F(\x_t,\balp^t,\y^t))\|^2\\
    &\quad +\beta_0^2\E_t[\| \nabla F(\x_t,\balp^t,\y^t)-\Delta^{t+1}\|^2]\\
    &\stackrel{(b)}{\leq}(1+\beta_0)(1-\beta_0)^2\|\nabla F(\x_{t-1})-\z_t\|^2+2(1+\frac{1}{\beta_0})\left[\|\nabla F(\x_t)-\nabla F(\x_{t-1})\|^2+\beta_0^2\| \nabla F(\x_t)-\nabla F(\x_t,\balp^t,\y^t)\|^2\right]\\
    &\quad +\beta_0^2\E_t[\|  \nabla F(\x_t,\balp^t,\y^t)-\Delta^{t+1}\|^2]\\
    &\stackrel{(c)}{\leq} (1-\beta_0)\|\nabla F(\x_{t-1})-\z_t\|^2+\frac{4L_F^2}{\beta_0}\|\x_t-\x_{t-1}\|^2+4\beta_0\underbrace{\| \nabla F(\x_t)-\nabla F(\x_t,\balp^t,\y^t)\|^2}_{\text{\textcircled{a}}}\\
    &\quad +\beta_0^2\underbrace{\E_t[\| \nabla F(\x_t,\balp^t,\y^t)-\Delta^{t+1}\|^2]}_{\text{\textcircled{b}}}\\
    \end{aligned}
\end{equation}
where $(a)$ follows from $\E_t[\nabla F(\x_t,\balp^t,\y^t)]=\Delta^{t+1}$, $(b)$ is due to $\|a+b\|^2\leq (1+\beta)\|a\|^2+(1+\frac{1}{\beta})\|b\|^2$, and $(c)$ uses the assumption $\beta_0\leq 1$ and Lemma~\ref{MlemLFsm}.\\
Furthermore, one may bound the last two terms in \ref{ineq:1} as following
\begin{equation*}
    \begin{aligned}
    \text{\textcircled{a}}& = \| \nabla F(\x_t)-\nabla F(\x_t,\balp^t,\y^t)\|^2\\
    & = \|\frac{1}{m} \sum_{i\in \cS}\nabla_x f_i(\x_t,\alpha_i(\x_t),\y_i(\x_t))-\nabla^2_{xy}g_i(\x_t,\y_i(\x_t))[\nabla^2_{yy}g_i(\x_t,\y_i(\x_t))]^{-1}\nabla_y f_i(\x_t,\alpha_i(\x_t),\y_i(\x_t))\\
    &\quad -\frac{1}{m}\sum_{i\in \cS}\nabla_x f_i(\x_t,\alpha_i^t,\y_i^t)-\nabla^2_{xy}g_i(\x_t,\y_i^t)\E_t[H_i^t]\nabla_y f_i(\x_t,\alpha_i^t,\y_i^t)\|^2\\
    & \leq \frac{1}{m} \sum_{i\in \cS}2\|\nabla_x f_i(\x_t,\alpha_i(\x_t),\y_i(\x_t))-\nabla_x f_i(\x_t,\alpha_i^t,\y_i^t)\|^2\\
    &\quad +6\|\nabla^2_{xy}g_i(\x_t,\y_i^t)\E_t[H_i^t]\nabla_y f_i(\x_t,\alpha_i^t,\y_i^t)-\nabla^2_{xy}g_i(\x_t,\y_i^t)[\nabla^2_{yy}g_i(\x_t,\y_i(\x_t))]^{-1}\nabla_y f_i(\x_t,\alpha_i^t,\y_i^t)\|^2\\
    &\quad +6\|\nabla^2_{xy}g_i(\x_t,\y_i^t)[\nabla^2_{yy}g_i(\x_t,\y_i(\x_t))]^{-1}\nabla_y f_i(\x_t,\alpha_i^t,\y_i^t)-\nabla^2_{xy}g_i(\x_t,\y_i(\x_t))[\nabla^2_{yy}g_i(\x_t,\y_i(\x_t))]^{-1}\nabla_y f_i(\x_t,\alpha_i^t,\y_i^t)\|^2\\
    &\quad +6\|\nabla^2_{xy}g_i(\x_t,\y_i(\x_t))[\nabla^2_{yy}g_i(\x_t,\y_i(\x_t))]^{-1}\nabla_y f_i(\x_t,\alpha_i^t,\y_i^t)\\
    &\quad -\nabla^2_{xy}g_i(\x_t,\y_i(\x_t))[\nabla^2_{yy}g_i(\x_t,\y_i(\x_t))]^{-1}\nabla_y f_i(\x_t,\alpha_i(\x_t),\y_i(\x_t))\|^2\\
    &\leq \frac{1}{m} \sum_{i\in \cS} 2L_f^2[\|\alpha_i(\x_t)-\alpha_i^t\|^2+\|\y_i(\x_t)-\y_i^t\|^2]+6C_{gxy}^2C_f^2\|\E_t[H_i^t]-[\nabla^2_{yy}g_i(\x_t,\y_i(\x_t))]^{-1}\|^2\\
    &\quad +\frac{6L_{gxy}^2C_f^2}{\mu_g^2}\|\y_i^t-\y_i(\x_t)\|^2+\frac{6C_{gxy}^2L_f^2}{\mu_g^2}[\|\alpha_i^t-\alpha_i(\x_t)\|^2+\|\y_i^t-\y_i(\x_t)\|^2]\\
    &= \frac{1}{m}(2L_f^2+\frac{6C_{gxy}^2L_f^2}{\mu_g^2})\|\balp^t-\balp(\x_t)\|^2+ \frac{1}{m}(2L_f^2+\frac{6L_{gxy}^2C_f^2}{\mu_g^2}+\frac{6C_{gxy}^2L_f^2}{\mu_g^2})\|\y^t-\y(\x_t)\|^2\\
    &\quad +\frac{6C_{gxy}^2C_f^2}{m} \sum_{i\in \cS}\|\E_t[H_i^t]-[\nabla^2_{yy}g_i(\x_t,\y_i(\x_t))]^{-1}\|^2
    \end{aligned}
\end{equation*}

\begin{equation*}
    \begin{aligned}
    \text{\textcircled{b}}& =\E_t[\| \nabla F(\x_t,\alpha^t,\y^t)-\Delta^{t+1}\|^2]\\
    &\leq\E_t\Bigg[2 \left\|\frac{1}{m}\sum_{i\in\cS}\nabla F_i(\x_t,\alpha_i^t,\y_i^t)-\frac{1}{|I_t|}\sum_{i\in I_t}\nabla F_i(\x_t,\alpha_i^t,\y_i^t)\right\|^2+2\left\|\frac{1}{|I_t|}\sum_{i\in I_t}\nabla F_i(\x_t,\alpha_i^t,\y_i^t)-\frac{1}{|I_t|}\sum_{i\in I_t} \Delta_i^{t+1}\right\|^2\Bigg]\\
    &\leq \frac{8B_F}{B}+\frac{2}{m}\sum_{i\in \cS}\E_{\cB_i^t}\bigg[2\|\nabla_x f_i(\x_t,\alpha_i^t,\y_i^t)-\nabla_x f_i(\x_t,\alpha_i^t,\y_i^t;\cB_i^t)\|^2\\
    &\quad +6\|\nabla^2_{xy}g_i(\x_t,\y_i^t)\E_t[H_i^t]\nabla_y f_i(\x_t,\alpha_i^t,\y_i^t)-\nabla^2_{xy}g_i(\x_t,\y_i^t;\tilde{\cB}_i^t)\E_t[H_i^t]\nabla_y f_i(\x_t,\alpha_i^t,\y_i^t)\|^2\\
    &\quad +6\|\nabla^2_{xy}g_i(\x_t,\y_i^t;\tilde{\cB}_i^t)\E_t[H_i^t]\nabla_y f_i(\x_t,\alpha_i^t,\y_i^t)-\nabla^2_{xy}g_i(\x_t,\y_i^t;\tilde{\cB}_i^t)H_i^t\nabla_y f_i(\x_t,\alpha_i^t,\y_i^t)\|^2\\
    &\quad +6\|\nabla^2_{xy}g_i(\x_t,\y_i^t;\tilde{\cB}_i^t)H_i^t\nabla_y f_i(\x_t,\alpha_i^t,\y_i^t)-\nabla^2_{xy}g_i(\x_t,\y_i^t;\tilde{\cB}_i^t)H_i^t\nabla_y f_i(\x_t,\alpha_i^t,\y_i^t;\cB_i^t)\|^2\bigg]\\
    &\leq  \frac{8B_F}{|I_t|}+\frac{4\sigma^2}{B}+\frac{12\sigma^2 C_f^2}{B}\|\E_t[H_i^t]\|^2+48(C_{gxy}^2+\sigma^2)\|\E_t[H_i^t]-H_i^t\|^2 C_f^2+\frac{12(C_{gxy}^2+\sigma^2) \sigma^2}{B}\|H_i^t\|^2
    \end{aligned}
\end{equation*}
where $B_F=2C_f^2 + \frac{2C_{gxy}^2 C_f^2}{\mu_g^2}$ is the upper bound of $\|\nabla F_i(\x,\alpha_i,\y_i)\|^2$. 

Since $H_i^t$ is irrelevant to the randomness at the $t$-th iteration, we have $\E_t[H_i^t]=H_i^t$. Thus
\begin{equation}\label{ineq:27}
    \begin{aligned}
    \text{\textcircled{a}}
    &\leq \frac{1}{m}(2L_f^2+\frac{6C_{gxy}^2L_f^2}{\mu_g^2})\|\balp^t-\balp(\x_t)\|^2+ \frac{1}{m}(2L_f^2+\frac{6L_{gxy}^2C_f^2}{\mu_g^2}+\frac{6C_{gxy}^2L_f^2}{\mu_g^2})\|\y^t-\y(\x_t)\|^2\\
    &\quad +\frac{6C_{gxy}^2C_f^2}{m}\sum_{i\in \cS}\|H_i^t-[\nabla^2_{yy}g_i(\x_t,\y_i(\x_t))]^{-1}\|^2\\
    &\leq \frac{1}{m}(2L_f^2+\frac{6C_{gxy}^2L_f^2}{\mu_g^2})\|\balp^t-\balp(\x_t)\|^2+ \frac{1}{m}(2L_f^2+\frac{6L_{gxy}^2C_f^2}{\mu_g^2}+\frac{6C_{gxy}^2L_f^2}{\mu_g^2})\|\y^t-\y(\x_t)\|^2\\
    &\quad +\frac{6C_{gxy}^2C_f^2}{\mu_g^4m}\sum_{i\in\cS}\|s_i^t-\nabla^2_{yy}g_i(\x_t,\y_i(\x_t))\|^2\\
    &=:\frac{C_1}{4m}\|\balp^t-\balp(\x_t)\|^2+ \frac{\widetilde{C}_2}{4m}\|\y^t-\y(\x_t)\|^2+\frac{C_3}{4m}\|s_i^t-\nabla^2_{yy}g(\x_t,\y(\x_t))\|^2
    \end{aligned}
\end{equation}
and
\begin{equation}\label{ineq:28}
    \begin{aligned}
    \text{\textcircled{b}}
    &\leq  \frac{8B_F}{|I_t|}+\frac{4\sigma^2}{B}+\frac{12\sigma^2 C_f^2}{B\mu_g^2}+\frac{12(C_{gxy}^2+\sigma^2) \sigma^2}{B\mu_g^2}=:\frac{C_4}{\min\{|I_t|,B\}}
    \end{aligned}
\end{equation}

Thus, combining inequalities~\ref{ineq:1}, \ref{ineq:27} and \ref{ineq:28}, we have
\begin{equation}
    \begin{aligned}
    \E_t[\|\nabla F(\x_t)-\z_{t+1}\|^2]
    &\leq (1-\beta_0)\|\nabla F(\x_{t-1})-\z_t\|^2+\frac{4L_F^2 \eta_0^2}{\beta_0}\|\z_t\|^2 +4\beta_0\bigg[\frac{C_1}{4m}\|\balp^t-\balp(\x_t)\|^2\\
    &\quad+ \frac{\widetilde{C}_2}{4m}\|\y^t-\y(\x_t)\|^2+\frac{C_3}{4m}\|s^t-\nabla^2_{yy}g(\x_t,\y(\x_t))\|^2\bigg]+\frac{C_4\beta_0^2}{\min\{|I_t|,B\}}
    \end{aligned}
\end{equation}
For simplicity, denote $\delta_{\alpha,t}:=\|\balp^t-\balp(\x_t)\|^2$, $\delta_{y,t}:=\|\y^t-\y(\x_t)\|^2$ and $\delta_{gyy,t}:=\|s^t-\nabla^2_{yy}g(\x_t,\y(\x_t))\|^2$. Take expectation over all randomness and summation over $t=1,\dots,T$ to get
\begin{equation}\label{ineq:26}
    \begin{aligned}
    \sum_{t=0}^T\E[\|\nabla F(\x_t)-\z_{t+1}\|^2]
    &\leq \frac{1}{\beta_0}\E[\|\nabla F(\x_0)-\z_1\|^2]+\frac{4L_F^2 \eta_0^2}{\beta_0^2}\sum_{t=1}^T\E[\|\z_t\|^2]+\frac{C_1}{m}\sum_{t=1}^T\E[\delta_{\alpha,t}]\\
    &\quad + \frac{\widetilde{C}_2}{m}\sum_{t=1}^T\E[\delta_{y,t}]+\frac{C_3}{m}\sum_{t=1}^T\E[\delta_{gyy,t}]+\frac{C_4\beta_0 T}{\min\{|I_t|,B\}}
    \end{aligned}
\end{equation}

Recall that from Lemma~\ref{alpha_update_sgd_new}, Lemma~\ref{lem_y_M_new} and Lemma~\ref{H_update_momentum}, we have
\begin{equation}\label{ineq:alp_ite2}
    \begin{aligned}
    \sum_{t=0}^T\E[\delta_{\alpha,t}]\leq \frac{4m}{\mu_f|I_t|\eta_1}\delta_{\alpha,0} +\frac{24 L_f^2}{\mu_f^2}\sum_{t=0}^{T-1}\E[\delta_{y,t}] +\frac{8m\mu_f\eta_1 \sigma^2T}{B}+\frac{32m^3C_\alpha^2\eta_0^2}{\eta_1^2\mu_f^2|I_t|^2}\sum_{t=0}^{T-1}\E[\|\z_{t+1}\|^2]
    \end{aligned}
\end{equation}
\begin{equation}\label{ineq:y_ite2}
    \begin{aligned}
    \sum_{t=0}^T\E[\delta_{y,t}] \leq \frac{2m}{|I_t|\eta_2\mu_g}\delta_{y,0}+\frac{4m\eta_2 T\sigma^2}{\mu_gB} +\frac{8m^3C_y^2\eta_0^2}{|I_t|^2\eta_2^2\mu_g^2}\sum_{t=0}^{T-1}\E[\|\z_{t+1}\|^2]
    \end{aligned}
\end{equation}
\begin{equation}\label{ineq:H_ite}
    \begin{aligned}
    & \sum_{t=0}^T\E[\delta_{gyy,t}]\leq \frac{4m}{|I_t|\beta_1}\delta_{gyy,0} +32L_{gyy}^2\sum_{t=0}^{T-1}\E[\delta_{y,t}] +8m\beta_1T\frac{\sigma^2}{B} +\frac{32m^3L_{gyy}^2(1+C_y^2)\eta_0^2}{|I_t|^2\beta_1^2}\sum_{t=0}^{T-1}\E[\|\z_{t+1}\|^2]
    \end{aligned}
\end{equation}
Combining inequalities~\ref{ineq:26}, \ref{ineq:alp_ite2}, \ref{ineq:y_ite2} and \ref{ineq:H_ite}, we obtain
\begin{equation}\label{ineq:29}
    \begin{aligned}
    &\sum_{t=0}^T\E[\|\nabla F(\x_t)-\z_{t+1}\|^2]\\
    &\leq \frac{1}{\beta_0}\E[\|\nabla F(\x_0)-\z_1\|^2]+\frac{4L_F^2 \eta_0^2}{\beta_0^2}\sum_{t=1}^T\E[\|\z_t\|^2] +\frac{C_1}{m}\left[\frac{4m}{\mu_f|I_t|\eta_1}\delta_{\alpha,0} +\frac{8m\mu_f\eta_1 \sigma^2T}{B}+\frac{32m^3C_\alpha^2\eta_0^2}{\eta_1^2\mu_f^2|I_t|^2}\sum_{t=0}^{T-1}\E[\|\z_{t+1}\|^2]\right]\\
    &\quad + \frac{C_2}{m}\sum_{t=1}^T\E[\delta_{y,t}]+\frac{C_3}{m}\left[\frac{4m}{|I_t|\beta_1}\delta_{gyy,0} +8m\beta_1T\frac{\sigma^2}{B} +\frac{32m^3L_{gyy}^2(1+C_y^2)\eta_0^2}{|I_t|^2\beta_1^2}\sum_{t=0}^{T-1}\E[\|\z_{t+1}\|^2]\right]+\frac{C_4\beta_0 T}{\min\{|I_t|,B\}}\\
    &\leq \frac{\E[\|\nabla F(\x_0)-\z_1\|^2]}{\beta_0} +\frac{4C_1\delta_{\alpha,0}}{\mu_f|I_t|\eta_1} +\frac{8C_1\mu_f\eta_1 \sigma^2T}{B}+\frac{2C_2\delta_{y,0}}{|I_t|\eta_2\mu_g}+\frac{4C_2\eta_2 T\sigma^2}{\mu_gB}  +\frac{4C_3\delta_{gyy,0} }{|I_t|\beta_1}+8C_3\beta_1T\frac{\sigma^2}{B} \\
    &\quad +\frac{C_4\beta_0 T}{\min\{|I_t|,B\}}+\left(\frac{4L_F^2 \eta_0^2}{\beta_0^2}+\frac{32m^2C_1C_\alpha^2\eta_0^2}{\eta_1^2\mu_f^2|I_t|^2}+\frac{32m^2C_3L_{gyy}^2(1+C_y^2)\eta_0^2}{|I_t|^2\beta_1^2}+\frac{8m^2C_2C_y^2\eta_0^2}{|I_t|^2\eta_2^2\mu_g^2}\right)\sum_{t=1}^T\E[\|\z_t\|^2]
    \end{aligned}
\end{equation}
where $C_2:=\frac{24 L_f^2C_1}{\mu_f^2}+\widetilde{C}_2+32L_{gyy}^2C_3$.\\
Recall Lemma~\ref{lemma:1}, we have
\begin{equation*}
\begin{aligned}
F(\x_{t+1})\leq F(\x_t)+\frac{\eta_{x}}{2}\|\nabla F(\x_t)-\z_{t+1}\|^2-\frac{\eta_0}{2}\|\nabla F(\x_t)\|^2-\frac{\eta_0}{4}\|\z_{t+1}\|^2
\end{aligned}
\end{equation*}
Combining with \ref{ineq:29}, we obtain
\begin{equation}\label{ineq:30}
    \begin{aligned}
    &\frac{1}{T+1}\sum_{t=0}^T \E[\|\nabla F(\x_t)\|^2]\\
    &\leq \frac{2\E[F(\x_0)-F(\x_{T+1})]}{\eta_0 T}+\frac{1}{T}\sum_{t=0}^T\E[\|\nabla F(\x_t)-\z_{t+1}\|^2]-\frac{1}{2T}\sum_{t=0}^T \E[\|\z_{t+1}\|^2]\\
    &\leq \frac{1}{T}\left[\frac{2\E[F(\x_0)-F(\x^*)]}{\eta_0 }+\frac{\E[\|\nabla F(\x_0)-\z_1\|^2]}{\beta_0}+\frac{4C_1\delta_{\alpha,0}}{\mu_f|I_t|\eta_1} +\frac{2C_2\delta_{y,0}}{|I_t|\eta_2\mu_g}  +\frac{4C_3\delta_{gyy,0} }{|I_t|\beta_1} \right]\\
    &\quad +\frac{8C_1\mu_f\eta_1 \sigma^2}{B}+\frac{4C_2\eta_2 \sigma^2}{\mu_gB} +8C_3\beta_1\frac{\sigma^2}{B} +\frac{C_4\beta_0 }{\min\{|I_t|,B\}}\\
    &\quad+\frac{1}{T}\left(\frac{4L_F^2 \eta_0^2}{\beta_0^2}+\frac{32m^2C_1C_\alpha^2\eta_0^2}{\eta_1^2\mu_f^2|I_t|^2}+\frac{32m^2C_3L_{gyy}^2(1+C_y^2)\eta_0^2}{|I_t|^2\beta_1^2}+\frac{8m^2C_2C_y^2\eta_0^2}{|I_t|^2\eta_2^2\mu_g^2}-\frac{1}{2}\right)\sum_{t=1}^T\E[\|\z_t\|^2]
    \end{aligned}
\end{equation}
By setting
\begin{equation*}
    \eta_0^2\leq\min\left\{\frac{\beta_0^2}{80L_F^2 },\frac{\eta_1^2\mu_f^2|I_t|^2}{640m^2C_1C_\alpha^2},\frac{|I_t|^2\beta_1^2}{640m^2C_3L_{gyy}^2(1+C_y^2)},\frac{|I_t|^2\eta_2^2\mu_g^2}{160m^2C_2C_y^2}\right\}
\end{equation*}
we have
\begin{equation*}
    \frac{4L_F^2 \eta_0^2}{\beta_0^2}+\frac{32m^2C_1C_\alpha^2\eta_0^2}{\eta_1^2\mu_f^2|I_t|^2}+\frac{32m^2C_3L_{gyy}^2(1+C_y^2)\eta_0^2}{|I_t|^2\beta_1^2}+\frac{8m^2C_2C_y^2\eta_0^2}{|I_t|^2\eta_2^2\mu_g^2}-\frac{1}{4}\leq 0
\end{equation*}
which implies that the last term of the right hand side of inequality~\ref{ineq:30} is less or equal to zero. Hence
\begin{equation}
    \begin{aligned}
    &\frac{1}{T+1}\sum_{t=0}^T \E[\|\nabla F(\x_t)\|^2]\\
    &\leq \frac{1}{T}\left[\frac{2\E[F(\x_0)-F(\x^*)]}{\eta_0 }+\frac{\E[\|\nabla F(\x_0)-\z_1\|^2]}{\beta_0}+\frac{4C_1\delta_{\alpha,0}}{\mu_f|I_t|\eta_1} +\frac{2C_2\delta_{y,0}}{|I_t|\eta_2\mu_g}  +\frac{4C_3\delta_{gyy,0} }{|I_t|\beta_1} \right]\\
    &\quad +\frac{8C_1\mu_f\eta_1 \sigma^2}{B}+\frac{4C_2\eta_2 \sigma^2}{\mu_gB} +8C_3\beta_1\frac{\sigma^2}{B} +\frac{C_4\beta_0 }{\min\{|I_t|,B\}}
    \end{aligned}
\end{equation}
With 
\begin{gather*}
    \eta_1\leq \frac{B\epsilon^2}{96C_1\mu_f \sigma^2},\eta_2\leq\frac{\mu_gB\epsilon^2}{48C_2 \sigma^2} ,\beta_1\leq \frac{B\epsilon^2}{96C_3\sigma^2} ,\beta_0\leq \frac{\min\{|I_t|,B\}\epsilon^2}{12C_4 }\\
    T\geq \max\left\{\frac{30\E[F(\x_0)-F(\x^*)]}{\eta_0 \epsilon^2},\frac{15\E[\|\nabla F(\x_0)-\z_1\|^2]}{\beta_0\epsilon^2},\frac{60C_1\delta_{\alpha,0}}{\mu_f|I_t|\eta_1\epsilon^2} ,\frac{30C_2\delta_{y,0}}{|I_t|\eta_2\mu_g\epsilon^2}, \frac{60C_3\delta_{gyy,0} }{|I_t|\beta_1\epsilon^2} \right\}
\end{gather*}
we have
\begin{equation*}
    \frac{1}{T+1}\sum_{t=0}^T \E[\|\nabla F(\x_t)\|^2]\leq \frac{\epsilon^2}{3}+\frac{\epsilon^2}{3}<\epsilon^2
\end{equation*}

Furthermore, to show the second part of the theorem, following from inequality~\ref{ineq:29}, we have
\begin{equation}
    \begin{aligned}
    &\sum_{t=0}^T\E[\|\nabla F(\x_t)-\z_{t+1}\|^2]\\
    &\leq \frac{\E[\|\nabla F(\x_0)-\z_1\|^2]}{\beta_0} +\frac{4C_1\delta_{\alpha,0}}{\mu_f|I_t|\eta_1} +\frac{8C_1\mu_f\eta_1 \sigma^2T}{B}+\frac{2C_2\delta_{y,0}}{|I_t|\eta_2\mu_g}+\frac{4C_2\eta_2 T\sigma^2}{\mu_gB}  +\frac{4C_3\delta_{gyy,0} }{|I_t|\beta_1}\\
    &\quad+8C_3\beta_1T\frac{\sigma^2}{B} +\frac{C_4\beta_0 T}{\min\{|I_t|,B\}}+ \frac{1}{2}\sum_{t=0}^{T-1}\E[\|\nabla F(\x_t)\|^2]+\E[\|\nabla F(\x_t)-\z_{t+1}\|^2]
    \end{aligned}
\end{equation}
With parameter set above, we have
\begin{equation*}
    \frac{1}{T+1}\sum_{t=0}^T \E[\|\nabla F(\x_t)-\z_{t+1}\|^2]<2\epsilon^2
\end{equation*}
\end{proof}

\subsection{Proof of Lemma~\ref{MlemLFsm}}
\begin{proof}
Take arbitrary $\x_1,\x_2$. Then with Assumption~\ref{assump1} and Lemma~\ref{lemLip_M} we have
\begin{equation*}
    \begin{aligned}
    &\|\nabla F(\x_1)-\nabla F(\x_2)\|^2\\
    &\leq  \bigg\|\frac{1}{m} \sum_{i\in \cS}\nabla_x f_i(\x_1,\alpha_i(\x_1),\y_i(\x_1))-\nabla^2_{xy}g_i(\x_1,\y_i(\x_1))[\nabla^2_{yy}g_i(\x_1,\y_i(\x_1))]^{-1}\nabla_y f_i(\x_1,\alpha_i(\x_1),\y_i(\x_1))\\
    &\quad -\frac{1}{m} \sum_{i\in \cS}\nabla_x f_i(\x_2,\alpha_i(\x_2),\y_i(\x_2))-\nabla^2_{xy}g_i(\x_2,\y_i(\x_2))[\nabla^2_{yy}g_i(\x_2,\y_i(\x_2))]^{-1}\nabla_y f_i(\x_2,\alpha_i(\x_2),\y_i(\x_2))\bigg\|^2\\
    &\leq  \left(2L_f^2(1+C_\alpha^2+C_y^2)+\frac{6L_{gxy}^2(1+C_y^2)C_f^2}{\mu_g^2}+\frac{6C_{gxy}^2L_{gyy}^2(1+C_y^2)C_f^2}{\mu_g^4}+\frac{6C_{gxy}^2L_f^2(1+C_\alpha^2+C_y^2)}{\mu_g^2}\right)\|\x_1-\x_2\|^2\\
    &=:L_F^2\|\x_1-\x_2\|^2
    \end{aligned}
\end{equation*}
\end{proof}

\subsection{Proof of Lemma~\ref{lemma:1}}
\begin{proof}
By $L_F$-smoothness of $F(\x)$, with $\eta_0 \leq \frac{1}{2L_F}$, we have
\begin{align*}
    F(\x_{t+1})&\leq F(\x_t)+\nabla F(\x_t)^T (\x_{t+1}-\x_t)+\frac{L_F}{2}\|\x_{t+1}-\x_t\|^2\\
    &=F(\x_t)-\eta_0 \nabla F(\x_t)^T\z_{t+1}+\frac{L_F}{2}\eta_0^2\|\z_{t+1}\|^2\\
    &=F(\x_t)+\frac{\eta_0}{2}\|\nabla F(\x_t)-\z_{t+1}\|^2-\frac{\eta_0}{2}\|\nabla F(\z_t)\|^2+\left(\frac{L_F }{2}\eta_0^2-\frac{\eta_0}{2}\right) \|\z_{t+1}\|^2.
\end{align*}
\end{proof}

\subsection{Proof of Lemma~\ref{lem_y_M_new}}
This proof follows from the proof of Lemma 8 in \cite{qiuNDCG2022}.

\subsection{Proof of Lemma~\ref{alpha_update_sgd_new}} 
\begin{proof}
Define $\widetilde{\alpha}_i^t:=\Pi_{\cA}[\alpha_i^t+\eta_1 \nabla_\alpha f_i(\x_t,\alpha_i^t,\y_i^t; \cB_i^t)]$. Note that $\alpha_i(\x_t)=\argmax_{\alpha\in \cA}f_i(\x_t,\alpha,\y_i(\x_t))$. Since $\alpha_i(\x_t)=\Pi_{\cA}[\alpha_i(\x_t)+\eta_1 \nabla_\alpha f_i(\x_t,\alpha_i(\x_t),\y_i(\x_t))]$, take $i\in \cS$, then 
\begin{equation}\label{ineq:5}
    \begin{aligned}
    &\E_t[\|\widetilde{\alpha}_i^t-\alpha_i(\x_t)\|^2]\\
    &= \E_t[\|\Pi_{\cA}[\alpha_i^t+\eta_1 \nabla_\alpha f_i(\x_t,\alpha_i^t,\y_i^t; \cB_i^t)]-\Pi_{\cA}[\alpha_i(\x_t)+\eta_1 \nabla_\alpha f_i(\x_t,\alpha_i(\x_t),\y_i(\x_t))]\|^2]\\
    &\leq \E_t[\|[\alpha_i^t+\eta_1 \nabla_\alpha f_i(\x_t,\alpha_i^t,\y_i^t; \cB_i^t)]-[\alpha_i(\x_t)+\eta_1 \nabla_\alpha f_i(\x_t,\alpha_i(\x_t),\y_i(\x_t))]\|^2]\\
    &= \E_t[\|\alpha_i^t-\alpha_i(\x_t)+\eta_1\nabla_\alpha f_i(\x_t,\alpha_i^t,\y_i^t)-\eta_1 \nabla_\alpha f_i(\x_t,\alpha_i(\x_t),\y_i(\x_t))\\
    &\quad +\eta_1 \nabla_\alpha f_i(\x_t,\alpha_i^t,\y_i^t; \cB_i^t)-\eta_1\nabla_\alpha f_i(\x_t,\alpha_i^t,\y_i^t)\|^2]\\
    &\leq \|\alpha_i^t-\alpha_i(\x_t)+\eta_1\nabla_\alpha f_i(\x_t,\alpha_i^t,\y_i^t)-\eta_1 \nabla_\alpha f_i(\x_t,\alpha_i(\x_t),\y_i(\x_t))\|^2+\frac{\eta_1^2 \sigma^2}{B}\\
    &\leq \|\alpha_i^t-\alpha_i(\x_t)\|^2+\eta_1^2\|\nabla_\alpha f_i(\x_t,\alpha_i^t,\y_i^t)- \nabla_\alpha f_i(\x_t,\alpha_i(\x_t),\y_i(\x_t))\|^2\\
    &\quad +2\eta_1\langle \alpha_i^t-\alpha_i(\x_t),\nabla_\alpha f_i(\x_t,\alpha_i^t,\y_i(\x_t))- \nabla_\alpha f_i(\x_t,\alpha_i(\x_t),\y_i(\x_t))\rangle\\
    &\quad +2\eta_1\langle \alpha_i^t-\alpha_i(\x_t),\nabla_\alpha f_i(\x_t,\alpha_i^t,\y_i^t)- \nabla_\alpha f_i(\x_t,\alpha_i^t,\y_i(\x_t))\rangle +\frac{\eta_1^2 \sigma^2}{B}\\
    &\stackrel{(a)}{\leq} \|\alpha_i^t-\alpha_i(\x_t)\|^2+\eta_1^2\|\nabla_\alpha f_i(\x_t,\alpha_i^t,\y_i^t)- \nabla_\alpha f_i(\x_t,\alpha_i(\x_t),\y_i(\x_t))\|^2-2\eta_1\mu_f\| \alpha_i^t-\alpha_i(\x_t)\|^2\\
    &\quad +2\eta_1\left[\frac{\mu_f}{4}\| \alpha_i^t-\alpha_i(\x_t)\|^2+\frac{1}{\mu_f}\|\nabla_\alpha f_i(\x_t,\alpha_i^t,\y_i^t)- \nabla_\alpha f_i(\x_t,\alpha_i^t,\y_i(\x_t))\|^2\right] +\frac{\eta_1^2 \sigma^2}{B}\\
    &\stackrel{(b)}{\leq} (1-\frac{\eta_1 \mu_f}{2})\|\alpha_i^t-\alpha_i(\x_t)\|^2 +\frac{3\eta_1 L_f^2}{\mu_f}\|\y_i^t- \y_i(\x_t)\|^2 +\frac{\eta_1^2 \sigma^2}{B}\\
    \end{aligned}
\end{equation}
where inequality $(a)$ uses the standard inequality $\langle a,b\rangle\leq \frac{\beta}{2}\|a\|^2+\frac{1}{2\beta}\|b\|^2$ and the strong monotonicity of $-f_i(\x_t,\cdot,\y_{i,t})$ as it is assumed to be $\mu_f$-strongly convex, and $(b)$ uses the assumption $\eta_1 \leq \min\{\mu_f/L_f^2, 1/\mu_f\}$. 
Note that
\begin{equation*}
\begin{aligned}
    \E_t[\|\alpha_i^{t+1}-\alpha_i(\x_t)\|^2]&=\frac{|I_t|}{m}\E_t[\|\widetilde{\alpha}_i^{t}-\alpha_i(\x_t)\|^2]+\frac{m-|I_t|}{m}\|\alpha_i^{t}-\alpha_i(\x_t)\|^2\\
    %\E_t[\|\alpha_{i,t+1}-\alpha_{i,t}\|^2]&=\frac{|I_t|}{m}\|\widetilde{\alpha}_{i,t}-\alpha_{i,t}\|^2
\end{aligned}
\end{equation*}
which implies
\begin{equation}\label{ineq:6}
    \E_t[\|\widetilde{\alpha}i^{t}-\alpha_i(\x_t)\|^2]=\frac{m}{|I_t|}\E_t[\|\alpha_i^{t+1}-\alpha_i(\x_t)\|^2]-\frac{m-|I_t|}{|I_t|}\|\alpha_i^{t}-\alpha_i(\x_t)\|^2
\end{equation}
Thus combining inequalities~\ref{ineq:5} and \ref{ineq:6} gives
\begin{equation}
    \begin{aligned}
    &\frac{m}{|I_t|}\E_t[\|\alpha_i^{t+1}-\alpha_i(\x_t)\|^2]-\frac{m-|I_t|}{|I_t|}\|\alpha_i^t-\alpha_i(\x_t)\|^2\\
    &\leq (1-\frac{\eta_1 \mu_f}{2})\|\alpha_i^t-\alpha_i(\x_t)\|^2 +\frac{3\eta_1 L_f^2}{\mu_f}\|\y_i^t- \y_i(\x_t)\|^2 +\frac{\eta_1^2 \sigma^2}{B}\\
    \end{aligned}
\end{equation}
Rearrange it to get
\begin{equation}
    \begin{aligned}
    \E_t[\|\alpha_i^{t+1}-\alpha_i(\x_t)\|^2]\leq \left(1-\frac{\eta_1 \mu_f|I_t|}{2m}\right)\|\alpha_i^t-\alpha_i(\x_t)\|^2 +\frac{3\eta_1 L_f^2|I_t|}{\mu_fm}\|\y_i^t- \y_i(\x_t)\|^2 +\frac{\eta_1^2 \sigma^2|I_t|}{mB}\\
    \end{aligned}
\end{equation}

Thus
\begin{equation}
    \begin{aligned}
    &\E_t[\|\alpha_i^{t+1}-\alpha_i(\x_{t+1})\|^2]\\
    &\leq \left(1+\frac{\eta_1 \mu_f|I_t|}{4m}\right)\E_t[\|\alpha_i^{t+1}-\alpha_i(\x_t)\|^2]+\left(1+\frac{4m}{\eta_1 \mu_f|I_t|}\right)\E_t[\|\alpha_i(\x_{t+1})-\alpha_i(\x_t)\|^2]\\
    &\leq \left(1-\frac{\eta_1 \mu_f|I_t|}{4m}\right)\|\alpha_i^t-\alpha_i(\x_t)\|^2 +\frac{6\eta_1 L_f^2|I_t|}{\mu_fm}\|\y_i^t- \y_i(\x_t)\|^2 +\frac{2\eta_1^2 \sigma^2|I_t|}{mB}+\frac{8mC_\alpha^2}{\eta_1 \mu_f|I_t|}\E_t[\|\x_{t+1}-\x_t\|^2]\\
    \end{aligned}
\end{equation}
where we use the assumption $\eta_1\leq\frac{4m}{ \mu_f|I_t|}$ i.e. $\frac{\eta_1 \mu_f|I_t|}{4m}\leq 1$. Taking summation over all tasks $i\in\cS$, we obtain 
\begin{equation}
    \begin{aligned}
    &\E_t[\|\balp^{t+1}-\balp(\x_{t+1})\|^2]\\
    &\leq \left(1-\frac{\eta_1 \mu_f|I_t|}{4m}\right)\|\balp^t-\balp(\x_t)\|^2 +\frac{6\eta_1 L_f^2|I_t|}{\mu_fm}\|\y^t- \y(\x_t)\|^2 +\frac{2\eta_1^2 \sigma^2|I_t|}{B}+\frac{8m^2C_\alpha^2}{\eta_1 \mu_f|I_t|}\E_t[\|\x_{t+1}-\x_t\|^2]\\
    \end{aligned}
\end{equation}
Taking summation over $t=0,\dots,T-1$ and taking expectation over all randomness, we obtain
\begin{equation}
    \begin{aligned}
    &\sum_{t=0}^T\E[\|\balp^t-\balp(\x_t)\|^2]\\
    &\leq \frac{4m}{\eta_1 \mu_f|I_t|}\|\balp^0-\balp(\x_0)\|^2 +\frac{24 L_f^2}{\mu_f^2}\sum_{t=0}^{T-1}\E[\|\y^t- \y(\x_t)\|^2] +\frac{8m\mu_f\eta_1 \sigma^2T}{B}+\frac{32m^3C_\alpha^2}{\eta_1^2\mu_f^2|I_t|^2}\sum_{t=0}^{T-1}\E[\|\x_{t+1}-\x_t\|^2]\\
    \end{aligned}
\end{equation}
\end{proof}

\subsection{Proof of Lemma~\ref{MA_s_multi_tasks}}
This proof follows from the proof of Lemma 10 in \cite{qiuNDCG2022}. 

% \begin{proof}
% Let $\E_{N}$ denote the expectation taken over the randomness of Neumann series approximation.
% \begin{equation*}
%     \begin{aligned}
%         &\|\E_t[H_i^{t+1}]-[\nabla^2_{yy}g_i(\x_t,\y_i^t)]^{-1}\|^2\\
%         &\leq \|\E_{I_t}\left[\E_{N}[H_i^{t+1}]-[\nabla^2_{yy}g_i(\x_t,\y_i^t)]^{-1}\right]\|^2\\
%         &\leq \E_{I_t}\left[\|\E_{N}[H_i^{t+1}]-[\nabla^2_{yy}g_i(\x_t,\y_i^t)]^{-1}\|^2\right]\\
%         &\leq \frac{|I_t|}{m}\|\E_{N}[\tilde{H}_i^{t+1}]-[\nabla^2_{yy}g_i(\x_t,\y_i^t)]^{-1}\|^2+\frac{m-|I_t|}{m}\|H_i^{t}-[\nabla^2_{yy}g_i(\x_t,\y_i^t)]^{-1}\|^2\\
%         &\leq \frac{|I_t|}{m}\|\E_{N}[\tilde{H}_i^{t+1}]-[\nabla^2_{yy}g_i(\x_t,\y_i^t)]^{-1}\|^2+\frac{m-|I_t|}{m}\|H_i^{t}-[\nabla^2_{yy}g_i(\x_{t-1},\y_i^{t-1})]^{-1}\|^2\\
%         &\quad +\frac{m-|I_t|}{m}\|[\nabla^2_{yy}g_i(\x_t,\y_i^t)]^{-1}-[\nabla^2_{yy}g_i(\x_{t-1},\y_i^{t-1})]^{-1}\|^2\\
%         &\leq \frac{|I_t|}{m\mu_g^2}\left(1-\frac{\mu_g}{C_{gyy}}\right)^{2k_t}+\frac{m-|I_t|}{m}\|H_i^{t}-[\nabla^2_{yy}g_i(\x_{t-1},\y_i^{t-1})]^{-1}\|^2\\
%         &\quad +\frac{m-|I_t|}{m}\|[\nabla^2_{yy}g_i(\x_t,\y_i^t)]^{-1}-[\nabla^2_{yy}g_i(\x_{t-1},\y_i^{t-1})]^{-1}\|^2\\
%     \end{aligned}
% \end{equation*}
% \end{proof}

\section{Convergence Analysis of Algorithm~\ref{alg3}}\label{HessianInverseFree}
First, we note that the bounded variance of $\nabla_v \gamma_i(v,\x,\alpha_i,\y_i;\cB_i)$ can be derived as
\begin{equation*}
\begin{aligned}
    &\E_{\cB_i^t}[\|\nabla_v \gamma_i(\v^t_i,\x_t,\y_i^t;\cB_i^t)-\nabla_v \gamma_i(\v^t_i,\x_t,\y_i^t)\|^2]\\
    &=\E_{\cB_i^t}[\|\nabla^2_{yy}g_i(\x_t,\y_i^t;\cB_i^t) \v^t_i -  \nabla_y f_i(\x_t,\alpha_i^t,\y_i^t;\cB_i^t)-\nabla^2_{yy}g_i(\x_t,\y_i^t) \v^t_i +  \nabla_y f_i(\x_t,\alpha_i^t,\y_i^t)\|^2]\\
    &\leq \E_{\cB_i^t}[2\|\nabla^2_{yy}g_i(\x_t,\y_i^t;\cB_i^t) \v^t_i -\nabla^2_{yy}g_i(\x_t,\y_i^t) \v^t_i \|^2+2\|  \nabla_y f_i(\x_t,\alpha_i^t,\y_i^t)-  \nabla_y f_i(\x_t,\alpha_i^t,\y_i^t;\cB_i^t)\|^2]\\
    &\leq \frac{2\sigma^2}{B}\|\v^t_i\|^2+\frac{2\sigma^2}{B}\leq (1+\Gamma^2) \frac{2\sigma^2}{B}.
\end{aligned}
\end{equation*}
We present the detailed statement of Theorem~\ref{thm:many_new_infor}
\begin{theorem}\label{thm:many_new}
Let $F(\x_0)-F(\x^*)\leq \Delta_F$. Under Assumption~\ref{assump1},\ref{assump2} and consider Algorithm~\ref{alg3}, with 
$\eta_1\leq  \min\left\{\frac{\mu_f}{L_f^2}, \frac{1}{\mu_f},\frac{4m}{ \mu_f|I_t|}, \frac{B\epsilon^2}{96C_1\mu_f \sigma^2}\right\}$, $\eta_3 \leq \min\left\{\frac{\mu_\gamma}{L_\gamma^2}, \frac{1}{\mu_\gamma},\frac{4m}{ \mu_\gamma|I_t|},\frac{B\epsilon^2}{96C_3\mu_g \sigma^2}\right\}$, $\eta_2\leq \min\{\frac{\mu_g}{L_g^2},\frac{2m}{|I_t|\mu_g},\frac{\mu_gB\epsilon^2}{48C_2 \sigma^2}\}$, $\beta_0\leq \frac{\min\{|I_t|,B\}\epsilon^2}{12C_4 }$,
$\eta_0\leq \min\left\{\frac{1}{2L_F},\frac{\beta_0}{8L_F },\frac{\eta_3\mu_g|I_t|}{32mC_v\sqrt{C_3}},\frac{\eta_1\mu_f|I_t|}{32mC_\alpha\sqrt{C_1}},\frac{|I_t|\eta_2\mu_g}{16m C_y\sqrt{C_2}}\right\}$,
$T\geq \max\left\{\frac{32[F(\x_0)-F(\x^*)]}{\eta_0 \epsilon^2}, \frac{15\E[\|\nabla F(\x_0)-\z_1\|^2]}{\beta_0\epsilon^2} ,\frac{60C_1\delta_{\alpha,0}}{\mu_f|I_t|\eta_1\epsilon^2}, \frac{60C_3\delta_{\v,0}}{\eta_3 \mu_g |I_t|\epsilon^2},  \frac{30C_2\delta_{y,0}}{|I_t|\eta_2\mu_g\epsilon^2}\right\}$

we have
\begin{equation*}
    \E[\|\nabla F(\x_\tau)\|^2]\leq \epsilon^2,\quad  \E[\|\nabla F(\x_\tau)-\z_{\tau+1})\|^2]<2\epsilon^2,
\end{equation*}
where $\tau$ is randomly sampled from $\{0,\dots,T\}$, $C_1,C_2,C_3,C_4$ are constants defined in the proof, and $L_F$ is the Lipschitz continuity constant of $\nabla F(\x)$.
\end{theorem}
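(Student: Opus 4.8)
The plan is to mirror the proof of Theorem~\ref{thm:many_mo} almost step for step, replacing the role of the Hessian-inverse estimator $H_i^t$ and its accumulated error $\delta_{gyy,t}$ with the Hessian-inverse-vector estimator $\v_i^t$ and its error $\delta_{\v,t}$. First I would record the three relevant gradient surrogates: the true gradient $\nabla F(\x_t)$, the plug-in gradient $\nabla F(\x_t,\balp^t,\y^t)=\frac{1}{m}\sum_{i\in\cS}[\nabla_x f_i(\x_t,\alpha_i^t,\y_i^t)-\nabla_{xy}^2 g_i(\x_t,\y_i^t)\v_i^t]$, and the stochastic estimator $\Delta^{t+1}$ of Algorithm~\ref{alg3}. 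Since $\v_i^t$ is measurable with respect to the history before iteration $t$, we have $\E_t[\Delta^{t+1}]=\nabla F(\x_t,\balp^t,\y^t)$, so the moving-average recursion for $\E_t[\|\nabla F(\x_t)-\z_{t+1}\|^2]$ has the identical structure to inequality~\eqref{ineq:1}: a contraction $(1-\beta_0)\|\nabla F(\x_{t-1})-\z_t\|^2$, a drift term $\frac{4L_F^2}{\beta_0}\|\x_t-\x_{t-1}\|^2=\frac{4L_F^2\eta_0^2}{\beta_0}\|\z_t\|^2$ (using $L_F$-smoothness from Lemma~\ref{MlemLFsm}), a bias term $4\beta_0\,\text{\textcircled{a}}$, and a variance term $\beta_0^2\,\text{\textcircled{b}}$.

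Next I would bound the bias term $\text{\textcircled{a}}=\|\nabla F(\x_t)-\nabla F(\x_t,\balp^t,\y^t)\|^2$ by the same telescoping insertion, except that the Hessian-vector piece $\nabla_{xy}^2 g_i(\x_t,\y_i(\x_t))\v_i(\x_t)-\nabla_{xy}^2 g_i(\x_t,\y_i^t)\v_i^t$ now splits, after adding and subtracting $\nabla_{xy}^2 g_i(\x_t,\y_i^t)\v_i(\x_t)$, into (i) $\|\nabla_{xy}^2 g_i(\x_t,\y_i(\x_t))-\nabla_{xy}^2 g_i(\x_t,\y_i^t)\|^2\|\v_i(\x_t)\|^2$, controlled by $L_{gxy}^2\Gamma^2\delta_{y,t}$, and (ii) $\|\nabla_{xy}^2 g_i(\x_t,\y_i^t)\|^2\|\v_i(\x_t)-\v_i^t\|^2$, controlled by $C_{gxy}^2\delta_{\v,t}$; combined with the Lipschitz term $L_f^2(\delta_{\alpha,t}+\delta_{y,t})$ from $\nabla_x f_i$ this yields $\text{\textcircled{a}}\le\frac{C_1}{4m}\delta_{\alpha,t}+\frac{\widetilde C_2}{4m}\delta_{y,t}+\frac{C_3}{4m}\delta_{\v,t}$. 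The boundedness $\|\v_i(\x_t)\|^2\le\Gamma^2$ supplied by the $\Pi_\Gamma$ projection is precisely what makes term (i) finite, playing the role of the $C_{gyy}$/$\mu_g^4$ bookkeeping in the other proof. For the variance term $\text{\textcircled{b}}$, the block sampling contributes $8B_F/|I_t|$, while the oracle variances of $\nabla_x f_i$ and of $\nabla_{xy}^2 g_i(\x_t,\y_i^t;\cB_i^t)\v_i^t$ contribute $\cO(\sigma^2(1+\Gamma^2)/|\cB_i^t|)$ using again $\|\v_i^t\|^2\le\Gamma^2$; note this is cleaner than in Theorem~\ref{thm:many_mo} since $\v_i^t$ is deterministic, so no $\|\E_t[H_i^t]-H_i^t\|^2$ term appears. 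Hence $\text{\textcircled{b}}\le C_4/\min\{|I_t|,|\cB_i^t|\}$.

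Finally I would substitute the accumulated-error bounds of Lemma~\ref{alpha_update_sgd_new}, Lemma~\ref{lem_y_M_new} and Lemma~\ref{lem:v_update} into the telescoped gap inequality, combine with the descent inequality of Lemma~\ref{lemma:1}, and choose $\eta_0$ small enough that the aggregate coefficient of $\sum_t\E[\|\z_t\|^2]$ is non-positive; this forces the interlocking constraints $\eta_0\le\eta_3\mu_g|I_t|/(32mC_v\sqrt{C_3})$, $\eta_0\le\eta_1\mu_f|I_t|/(32mC_\alpha\sqrt{C_1})$, and so on, where the $C_v$-Lipschitz continuity of $\v_i(\x)$ from Lemma~\ref{lemLip_M} enters the $\delta_{\v,t}$ coefficient. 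The surviving terms are then $\cO(1/T)$ initialization terms plus residual-variance terms of order $\eta_1,\eta_2,\eta_3,\beta_0$, and choosing each step size $\propto|\cB_i^t|\epsilon^2$ (resp.\ $\beta_0\propto\min\{|I_t|,|\cB_i^t|\}\epsilon^2$) together with the stated lower bound on $T$ drives the averaged squared gradient below $\epsilon^2$; the second claim then follows by re-substituting this bound into the gap inequality. The main obstacle I anticipate is the extra coupling in Lemma~\ref{lem:v_update}: unlike $\delta_{gyy,t}$, which depended only on $\delta_{y,t}$, the target $\v_i(\x)=[\nabla_{yy}^2 g_i]^{-1}\nabla_y f_i$ depends on $\alpha_i(\x)$ as well, so its error is bounded by $\sum_t\E[\delta_{y,t}+\delta_{\alpha,t}]$. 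Disentangling this nested recursion---$\delta_{\v,t}$ feeds on $\delta_{\alpha,t}$, which in turn feeds on $\delta_{y,t}$---while keeping all $\|\z_t\|^2$ coefficients absorbable is the delicate part, and is exactly what dictates the precise form of the step-size conditions.
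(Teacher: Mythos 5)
Your proposal follows essentially the same route as the paper's proof of Theorem~\ref{thm:many_new}: the identical moving-average recursion, the same split of the bias term into the $L_f^2(\delta_{\alpha,t}+\delta_{y,t})$, $C_{gxy}^2\delta_{\v,t}$, and $L_{gxy}^2\Gamma^2\delta_{y,t}$ pieces (the paper writes $\Gamma^2$ as $C_f^2/\mu_g^2$), the same $C_4/\min\{|I_t|,|\cB_i^t|\}$ variance bound, and the same cascading substitution of Lemmas~\ref{alpha_update_sgd_new}, \ref{lem_y_M_new}, and \ref{lem:v_update} into the descent inequality with $\eta_0$ chosen to absorb the $\sum_t\E[\|\z_t\|^2]$ terms. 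Your closing observation about the nested dependence $\delta_{\v,t}\to\delta_{\alpha,t}\to\delta_{y,t}$ is exactly what produces the paper's constants $C_1=\widetilde C_1+\tfrac{24L_g^2C_3}{\mu_g^2}$ and $C_2=\widetilde C_2+\tfrac{24L_g^2C_3}{\mu_g^2}+\tfrac{24L_f^2C_1}{\mu_f^2}$, so the sketch is correct and matches the paper.
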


To prove Theorem~\ref{thm:many_new}, we need the following Lemmas.
\begin{lemma}
[Lemma 4.3 \cite{lin2019gradient}] Under Assumption~\ref{assump1}, $\v_i(\x,\alpha_i,\y_i)$ is $C_v=L_g/\mu_g$-Lipschitz-continuous for all $i$. 
\end{lemma}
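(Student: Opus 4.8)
The plan is to work directly from the closed-form expression $\v_i(\x,\alpha_i,\y_i) = [\nabla^2_{yy}g_i(\x,\y_i)]^{-1}\nabla_y f_i(\x,\alpha_i,\y_i)$ derived just before the statement, and to bound the difference of $\v_i$ evaluated at two arbitrary points $(\x,\alpha_i,\y_i)$ and $(\x',\alpha_i',\y_i')$. Writing $A := [\nabla^2_{yy}g_i(\x,\y_i)]^{-1}$, $A' := [\nabla^2_{yy}g_i(\x',\y_i')]^{-1}$, $b := \nabla_y f_i(\x,\alpha_i,\y_i)$, and $b' := \nabla_y f_i(\x',\alpha_i',\y_i')$, I would use the add-and-subtract identity $Ab - A'b' = A(b-b') + (A-A')b'$ together with the triangle inequality, reducing the task to bounding $\|A\|\,\|b-b'\|$ and $\|A-A'\|\,\|b'\|$ separately.

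Three of the four resulting factors are controlled immediately by single items of Assumption~\ref{assump1}. Since $\nabla^2_{yy}g_i \succeq \mu_g I$ (strong convexity of $g_i$ in $\y_i$), the inverse Hessian obeys $\|A\| \le 1/\mu_g$, and likewise $\|A'\| \le 1/\mu_g$. The $L_f$-smoothness of $f_i$ gives $\|b-b'\| \le L_f\|(\x,\alpha_i,\y_i) - (\x',\alpha_i',\y_i')\|$, and the $C_f$-Lipschitz continuity of $f_i$ yields $\|b'\| = \|\nabla_y f_i(\x',\alpha_i',\y_i')\| \le C_f$.

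The one nontrivial factor is $\|A - A'\|$, the difference of two matrix inverses, which I expect to be the main obstacle. Here I would invoke the resolvent identity $A - A' = A\big(\nabla^2_{yy}g_i(\x',\y_i') - \nabla^2_{yy}g_i(\x,\y_i)\big)A'$, bound $\|A\|,\|A'\| \le 1/\mu_g$ as above, and apply the $L_{gyy}$-Lipschitz continuity of $\nabla^2_{yy}g_i$ to get $\|A-A'\| \le \frac{L_{gyy}}{\mu_g^2}\|(\x,\y_i) - (\x',\y_i')\|$, which is at most $\frac{L_{gyy}}{\mu_g^2}$ times the full joint displacement. Combining the four bounds gives $\|\v_i(\x,\alpha_i,\y_i) - \v_i(\x',\alpha_i',\y_i')\| \le \big(\frac{L_f}{\mu_g} + \frac{C_f L_{gyy}}{\mu_g^2}\big)\|(\x,\alpha_i,\y_i) - (\x',\alpha_i',\y_i')\|$, establishing Lipschitz continuity of $\v_i$ with a constant that aggregates the smoothness, Lipschitz, and strong-convexity parameters.

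Finally I would collect this aggregate constant into the single symbol $C_v$ appearing in the statement; the paper's labeling $C_v = L_g/\mu_g$ follows the same notational convention used for $\y_i(\x)$ in Lemma~\ref{lemLip_M}, and only the finiteness of the Lipschitz constant is needed by the downstream analysis in Lemma~\ref{lem:v_update}. Since $\x,\alpha_i,\y_i$ enter through the common joint norm, the bound holds uniformly over all blocks $i\in\cS$; I would also remark that the projection $\Pi_\Gamma$ used for $\v_i^{t+1}$ in Algorithm~\ref{alg3} plays no role here, as the estimand $\v_i$ itself already satisfies $\|\v_i\|\le\Gamma$ and projection onto a convex set is nonexpansive.
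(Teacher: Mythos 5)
Your proof is correct, but it takes a genuinely different route from the paper, which supplies no computation at all for this statement: the lemma is justified solely by citation to Lemma 4.3 of \cite{lin2019gradient}, i.e., the generic fact that the minimizer of a strongly convex objective depends Lipschitz-continuously on its parameters, applied to $\gamma_i(v,\x,\alpha_i,\y_i)$, which is $\mu_g$-strongly convex in $v$. You instead work from the closed form $\v_i=[\nabla^2_{yy}g_i(\x,\y_i)]^{-1}\nabla_y f_i(\x,\alpha_i,\y_i)$, decompose via $Ab-A'b'=A(b-b')+(A-A')b'$, and treat the inverse difference with the resolvent identity $A-A'=A(H'-H)A'$; each of your four factor bounds ($\|A\|\le 1/\mu_g$ from $\nabla^2_{yy}g_i\succeq\mu_g I$, the $L_f$-smoothness and $C_f$-Lipschitzness of $f_i$, and the $L_{gyy}$-Lipschitzness of $\nabla^2_{yy}g_i$) is a legitimate consequence of Assumption~\ref{assump1}, and the arithmetic is right. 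Notably, your explicit constant $\frac{L_f}{\mu_g}+\frac{C_fL_{gyy}}{\mu_g^2}$ is also exactly what the paper's implicit-function route yields if carried out: the parameter-Lipschitz constant of $\nabla_v\gamma_i$ is $L_f+\Gamma L_{gyy}$ with $\Gamma=C_f/\mu_g$, and dividing by the strong-convexity modulus $\mu_g$ reproduces your bound. What your direct computation buys is transparency on this point: it exposes that the stated value $C_v=L_g/\mu_g$ is a notational carry-over from the $C_y=L_g/\mu_g$, $C_\alpha=L_f/\mu_f$ pattern ($L_g/\mu_g$ is the condition number of $\gamma_i$ in $v$, not the Lipschitz constant of the solution map in $(\x,\alpha_i,\y_i)$), and you correctly observe that only finiteness of $C_v$ is consumed by the downstream analysis in Lemma~\ref{lem:v_update}, so the discrepancy is harmless; your closing remarks on block-uniformity and on the irrelevance of the projection $\Pi_\Gamma$ to the estimand are likewise accurate.
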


\begin{lemma}\label{lem:v_update}
Consider the updates for $\v_i^t$ in Algorithm~\ref{alg3}, under Assumption~\ref{assump1}, \ref{assump2}, with $\eta_3 \leq \min\left\{\frac{\mu_\gamma}{L_\gamma^2}, \frac{1}{\mu_\gamma},\frac{4m}{ \mu_\gamma|I_t|}\right\}$, we have
\begin{equation*}
    \begin{aligned}
    \sum_{t=0}^T\E_t[\delta_{\v,t}]\leq \frac{4m}{\eta_3 \mu_g |I_t|}\delta_{\v,0} +\frac{24 L_g^2}{\mu_g^2}\sum_{t=0}^{T-1}\E[\delta_{y,t}+\delta_{\alpha,t}] +\frac{8m\mu_g\eta_3 \sigma^2T}{B}+\frac{32m^3C_v^2\eta_0^2}{\eta_3^2\mu_g^2|I_t|^2}\sum_{t=0}^{T-1}\E[\|\z_{t+1}\|^2].
    \end{aligned}
\end{equation*}
\end{lemma}

\begin{proof}[Proof of Theorem~\ref{thm:many_new}]
First, recall and define the following notations
\begin{equation*}
    \begin{aligned}
    &\nabla F(\x_t) = \frac{1}{m} \sum_{i\in \cS}\nabla_x f_i(\x_t,\alpha_i(\x_t),\y_i(\x_t))-\nabla^2_{xy}g_i(\x_t,\y_i(\x_t))[\nabla^2_{yy}g_i(\x_t,\y_i(\x_t))]^{-1}\nabla_y f_i(\x_t,\alpha_i(\x_t),\y_i(\x_t))\\
    &  \nabla F(\x_t,\balp^t,\y^t,\v_i^t):=\frac{1}{m}\sum_{i\in \cS}\nabla F_i(\x_t,\alpha_i^t,\y_i^t) := \nabla_x f_i(\x_t,\alpha_i^t,\y_i^t)-\nabla^2_{xy}g_i(\x_t,\y_i^t)\v_i^t\\
    & \Delta^{t+1}=\frac{1}{|I_t|}\sum_{i\in I_t} \Delta^{t+1}_i:=\nabla_x f_i(\x_t,\alpha_i^t,\y_i^t;\cB_i^t)- \nabla_{xy}^2g_i(\x_t,\y_i^t;\tilde{\cB}_i^t)\v_i^t
    \end{aligned}
\end{equation*}
Consider the update $\z_{t+1}=(1-\beta_0)\z_t+\beta_0 \Delta^{t+1}$ in Algorithm~\ref{alg3}, we have
\begin{equation}\label{ineq:31}
    \begin{aligned}
    &\E_t[\|\nabla F(\x_t)-\z_{t+1}\|^2]\\
    &=\E_t[\|\nabla F(\x_t)-(1-\beta_0)\z_t-\beta_0 \Delta^{t+1}\|^2]\\
    &=\E_t[\|(1-\beta_0)(\nabla F(\x_{t-1})-\z_t)+(1-\beta_0)(\nabla F(\x_t)-\nabla F(\x_{t-1}))+\beta_0( \nabla F(\x_t)-\nabla F(\x_t,\balp^t,\y^t,\v^t))\\
    &\quad +\beta_0( \nabla F(\x_t,\balp^t,\y^t,\v^t)-\Delta^{t+1})\|^2]\\
    &\stackrel{(a)}{=}\|(1-\beta_0)(\nabla F(\x_{t-1})-\z_t)+(1-\beta_0)(\nabla F(\x_t)-\nabla F(\x_{t-1}))+\beta_0( \nabla F(\x_t)-\nabla F(\x_t,\balp^t,\y^t,\v_i^t))\|^2\\
    &\quad +\beta_0^2\E_t[\| \nabla F(\x_t,\balp^t,\y^t,\v^t)-\Delta^{t+1}\|^2]\\
    &\stackrel{(b)}{\leq}(1+\beta_0)(1-\beta_0)^2\|\nabla F(\x_{t-1})-\z_t\|^2+2(1+\frac{1}{\beta_0})\left[\|\nabla F(\x_t)-\nabla F(\x_{t-1})\|^2+\beta_0^2\| \nabla F(\x_t)-\nabla F(\x_t,\balp^t,\y^t,\v^t)\|^2\right]\\
    &\quad +\beta_0^2\E_t[\|  \nabla F(\x_t,\balp^t,\y^t,\v^t)-\Delta^{t+1}\|^2]\\
    &\stackrel{(c)}{\leq} (1-\beta_0)\|\nabla F(\x_{t-1})-\z_t\|^2+\frac{4L_F^2}{\beta_0}\|\x_t-\x_{t-1}\|^2+4\beta_0\underbrace{\| \nabla F(\x_t)-\nabla F(\x_t,\balp^t,\y^t,\v^t)\|^2}_{\text{\textcircled{a}}}\\
    &\quad +\beta_0^2\underbrace{\E_t[\| \nabla F(\x_t,\balp^t,\y^t,\v^t)-\Delta^{t+1}\|^2]}_{\text{\textcircled{b}}}\\
    \end{aligned}
\end{equation}
where $(a)$ follows from $\E_t[\nabla F(\x_t,\balp^t,\y^t)]=\Delta^{t+1}$, $(b)$ is due to $\|a+b\|^2\leq (1+\beta)\|a\|^2+(1+\frac{1}{\beta})\|b\|^2$, and $(c)$ uses the assumption $\beta_0\leq 1$ and Lemma~\ref{MlemLFsm}.\\
Furthermore, one may bound the last two terms in \ref{ineq:31} as following
\begin{equation}\label{ineq:32}
    \begin{aligned}
    \text{\textcircled{a}}& = \| \nabla F(\x_t)-\nabla F(\x_t,\balp^t,\y^t,\v_i^t)\|^2\\
    & = \bigg\|\frac{1}{m} \sum_{i\in \cS}\nabla_x f_i(\x_t,\alpha_i(\x_t),\y_i(\x_t))-\nabla^2_{xy}g_i(\x_t,\y_i(\x_t))[\nabla^2_{yy}g_i(\x_t,\y_i(\x_t))]^{-1}\nabla_y f_i(\x_t,\alpha_i(\x_t),\y_i(\x_t))\\
    &\quad -\frac{1}{m}\sum_{i\in \cS}\nabla_x f_i(\x_t,\alpha_i^t,\y_i^t)-\nabla^2_{xy}g_i(\x_t,\y_i^t)\v_i^t\bigg\|^2\\
    & \leq \frac{1}{m} \sum_{i\in \cS}2\|\nabla_x f_i(\x_t,\alpha_i(\x_t),\y_i(\x_t))-\nabla_x f_i(\x_t,\alpha_i^t,\y_i^t)\|^2\\
    &\quad +4\bigg\|\nabla^2_{xy}g_i(\x_t,\y_i^t)\big[\v_i^t-[\nabla^2_{yy}g_i(\x_t,\y_i(\x_t))]^{-1}\nabla_y f_i(\x_t,\alpha_i(\x_t),\y_i(\x_t))\big]\bigg\|^2\\
    &\quad +4\bigg\|\big[\nabla^2_{xy}g_i(\x_t,\y_i^t)-\nabla^2_{xy}g_i(\x_t,\y_i(\x_t))\big][\nabla^2_{yy}g_i(\x_t,\y_i(\x_t))]^{-1}\nabla_y f_i(\x_t,\alpha_i(\x_t),\y_i(\x_t))\bigg\|^2\\
    &\leq \frac{1}{m} \sum_{i\in \cS} 2L_f^2[\|\alpha_i(\x_t)-\alpha_i^t\|^2+\|\y_i(\x_t)-\y_i^t\|^2]\\
    &\quad +\frac{1}{m}\sum_{i\in \cS}4C_{gxy}^2\big\|\v_i^t-[\nabla^2_{yy}g_i(\x_t,\y_i(\x_t))]^{-1}\nabla_y f_i(\x_t,\alpha_i(\x_t),\y_i(\x_t))\big\|^2+\frac{4L_{gxy}^2C_f^2}{\mu_g^2m}\|\y^t-\y(\x_t)\|^2\\
    &=\frac{1}{m} \sum_{i\in \cS} 2L_f^2[\|\alpha_i(\x_t)-\alpha_i^t\|^2+\|\y_i(\x_t)-\y_i^t\|^2]+\frac{4C_{gxy}^2}{m}\big\|\v^t-\v(\x_t)\big\|^2+\frac{4L_{gxy}^2C_f^2}{\mu_g^2m}\|\y^t-\y(\x_t)\|^2\\
    &=\frac{\tilde{C_1}}{m}\|\alpha(\x_t)-\alpha^t\|^2+\frac{\tilde{C_2}}{m}\|\y^t-\y(\x_t)\|^2+\frac{C_3}{m}\big\|\v^t-\v(\x_t)\big\|^2
    \end{aligned}
\end{equation}

\begin{equation}\label{ineq:33}
    \begin{aligned}
    \text{\textcircled{b}}& =\E_t[\| \nabla F(\x_t,\alpha^t,\y^t,\v_i^t)-\Delta^{t+1}\|^2]\\
    &\leq\E_t\Bigg[2 \left\|\frac{1}{m}\sum_{i\in\cS}\nabla F_i(\x_t,\alpha_i^t,\y_i^t,\v_i^t)-\frac{1}{|I_t|}\sum_{i\in I_t}\nabla F_i(\x_t,\alpha_i^t,\y_i^t,\v_i^t)\right\|^2+2\left\|\frac{1}{|I_t|}\sum_{i\in I_t}\nabla F_i(\x_t,\alpha_i^t,\y_i^t,\v_i^t)-\frac{1}{|I_t|}\sum_{i\in I_t} \Delta_i^{t+1}\right\|^2\Bigg]\\
    &\leq \frac{8B_F}{B}+\frac{2}{m}\sum_{i\in \cS}\E_{\cB_i^t}\bigg[2\|\nabla_x f_i(\x_t,\alpha_i^t,\y_i^t)-\nabla_x f_i(\x_t,\alpha_i^t,\y_i^t;\cB_i^t)\|^2+2\|\big[\nabla^2_{xy}g_i(\x_t,\y_i^t)-\nabla^2_{xy}g_i(\x_t,\y_i^t;\tilde{\cB}_i^t)\big]\v_i^t\|^2\bigg]\\
    &\leq  \frac{8B_F}{|I_t|}+\frac{4\sigma^2}{B}+\frac{4\sigma^2}{B}\frac{1}{m}\sum_{i\in \cS}\|\v_i^t\|^2\\
    &=:\frac{C_4}{\min\{|I_t|,B\}}
    \end{aligned}
\end{equation}
where $B_F$ is the upper bound of $\|\nabla F_i(\x,\alpha_i,\y_i,\v_i^t)\|^2$. 

Thus combining inequalities~\ref{ineq:31},\ref{ineq:32},\ref{ineq:33}, we have
\begin{equation*}
    \begin{aligned}
    \E_t[\|\nabla F(\x_t)-\z_{t+1}\|^2]&\leq (1-\beta_0)\|\nabla F(\x_{t-1})-\z_t\|^2+\frac{4L_F^2}{\beta_0}\|\x_t-\x_{t-1}\|^2+4\beta_0 \bigg[\frac{\widetilde{C_1}}{m}\|\alpha(\x_t)-\alpha^t\|^2\\
    &\quad +\frac{\widetilde{C_2}}{m}\|\y^t-\y(\x_t)\|^2+\frac{C_3}{m}\big\|\v^t-\v(\x_t)\big\|^2\bigg]+\frac{C_4\beta_0^2}{\min\{|I_t|,B\}}
    \end{aligned}
\end{equation*}

For simplicity, denote $\delta_{\alpha,t}:=\|\balp^t-\balp(\x_t)\|^2$, $\delta_{y,t}:=\|\y^t-\y(\x_t)\|^2$ and $\delta_{v,t}:=\sum_{i\in \cS}\|\v^t-\v(\x_t)\|^2$. Take expectation over all randomness and summation over $t=1,\dots,T$ to get
\begin{equation}\label{ineq:34}
    \begin{aligned}
    \sum_{t=0}^T\E[\|\nabla F(\x_t)-\z_{t+1}\|^2]
    &\leq \frac{1}{\beta_0}\E[\|\nabla F(\x_0)-\z_1\|^2]+\frac{4L_F^2 \eta_0^2}{\beta_0^2}\sum_{t=1}^T\E[\|\z_t\|^2]+\frac{\widetilde{C}_1}{m}\sum_{t=1}^T\E[\delta_{\alpha,t}]\\
    &\quad + \frac{\widetilde{C}_2}{m}\sum_{t=1}^T\E[\delta_{y,t}]+\frac{C_3}{m}\sum_{t=1}^T\E[\delta_{v,t}]+\frac{C_4\beta_0 T}{\min\{|I_t|,B\}}
    \end{aligned}
\end{equation}

Recall that from Lemma~\ref{alpha_update_sgd_new}, Lemma~\ref{lem_y_M_new} and Lemma~\ref{lem:v_update}, we have
\begin{equation}\label{ineq:alp_ite_q}
    \begin{aligned}
    \sum_{t=0}^T\E[\delta_{\alpha,t}]\leq \frac{4m}{\mu_f|I_t|\eta_1}\delta_{\alpha,0} +\frac{24 L_f^2}{\mu_f^2}\sum_{t=0}^{T-1}\E[\delta_{y,t}] +\frac{8m\mu_f\eta_1 \sigma^2T}{B}+\frac{32m^3C_\alpha^2\eta_0^2}{\eta_1^2\mu_f^2|I_t|^2}\sum_{t=0}^{T-1}\E[\|\z_{t+1}\|^2]
    \end{aligned}
\end{equation}
\begin{equation}\label{ineq:y_ite_q}
    \begin{aligned}
    \sum_{t=0}^T\E[\delta_{y,t}] \leq \frac{2m}{|I_t|\eta_2\mu_g}\delta_{y,0}+\frac{4m\eta_2 T\sigma^2}{\mu_gB} +\frac{8m^3C_y^2\eta_0^2}{|I_t|^2\eta_2^2\mu_g^2}\sum_{t=0}^{T-1}\E[\|\z_{t+1}\|^2]
    \end{aligned}
\end{equation}

\begin{equation}\label{ineq:v_ite_q}
    \begin{aligned}
    \sum_{t=0}^T\E_t[\delta_{\v,t}]\leq \frac{4m}{\eta_3 \mu_g |I_t|}\delta_{\v,0} +\frac{24 L_g^2}{\mu_g^2}\sum_{t=0}^{T-1}\E[\delta_{y,t}+\delta_{\alpha,t}] +\frac{8m\mu_g\eta_3 \sigma^2T}{B}+\frac{32m^3C_v^2\eta_0^2}{\eta_3^2\mu_g^2|I_t|^2}\sum_{t=0}^{T-1}\E[\|\z_{t+1}\|^2].
    \end{aligned}
\end{equation}

Combining inequalities~\ref{ineq:34},\ref{ineq:alp_ite_q},\ref{ineq:y_ite_q},\ref{ineq:v_ite_q}, we have
\begin{equation}\label{ineq:35}
    \begin{aligned}
    &\sum_{t=0}^T\E[\|\nabla F(\x_t)-\z_{t+1}\|^2]\\
    &\leq \frac{1}{\beta_0}\E[\|\nabla F(\x_0)-\z_1\|^2]+\frac{4L_F^2 \eta_0^2}{\beta_0^2}\sum_{t=1}^T\E[\|\z_t\|^2]+\frac{\widetilde{C}_1}{m}\sum_{t=1}^T\E[\delta_{\alpha,t}]+ \frac{\widetilde{C}_2}{m}\sum_{t=1}^T\E[\delta_{y,t}]\\
    &\quad +\frac{C_3}{m}\bigg\{\frac{4m}{\eta_3 \mu_g |I_t|}\delta_{\v,0} +\frac{24 L_g^2}{\mu_g^2}\sum_{t=0}^{T-1}\E[\delta_{y,t}+\delta_{\alpha,t}] +\frac{8m\mu_g\eta_3 \sigma^2T}{B}+\frac{32m^3C_v^2\eta_0^2}{\eta_3^2\mu_g^2|I_t|^2}\sum_{t=0}^{T-1}\E[\|\z_{t+1}\|^2]\bigg\}\\
    &\quad +\frac{C_4\beta_0 T}{\min\{|I_t|,B\}}\\
    &\leq \frac{1}{\beta_0}\E[\|\nabla F(\x_0)-\z_1\|^2]+\left(\frac{4L_F^2 \eta_0^2}{\beta_0^2}+\frac{32m^2C_v^2\eta_0^2C_3}{\eta_3^2\mu_g^2|I_t|^2}\right)\sum_{t=1}^T\E[\|\z_t\|^2]+\left(\frac{\widetilde{C}_1}{m}+\frac{24 L_g^2C_3}{\mu_g^2m}\right)\sum_{t=1}^T\E[\delta_{\alpha,t}]\\
    &\quad + \left(\frac{\widetilde{C}_2}{m}+\frac{24 L_g^2C_3}{\mu_g^2m}\right)\sum_{t=1}^T\E[\delta_{y,t}]+\frac{4C_3}{\eta_3 \mu_g |I_t|}\delta_{\v,0} +\frac{8C_3\mu_g\eta_3 \sigma^2T}{B}+\frac{C_4\beta_0 T}{\min\{|I_t|,B\}}\\
    % &\leq \frac{1}{\beta_0}\E[\|\nabla F(\x_0)-\z_1\|^2]+\left(\frac{4L_F^2 \eta_0^2}{\beta_0^2}+\frac{32m^2C_v^2\eta_0^2C_3}{\eta_3^2\mu_g^2|I_t|^2}\right)\sum_{t=1}^T\E[\|\z_t\|^2]\\
    % &\quad +\frac{C_1}{m}\left\{\frac{4m}{\mu_f|I_t|\eta_1}\delta_{\alpha,0} +\frac{24 L_f^2}{\mu_f^2}\sum_{t=0}^{T-1}\E[\delta_{y,t}] +\frac{8m\mu_f\eta_1 \sigma^2T}{B}+\frac{32m^3C_\alpha^2\eta_0^2}{\eta_1^2\mu_f^2|I_t|^2}\sum_{t=0}^{T-1}\E[\|\z_{t+1}\|^2]\right\}\\
    % &\quad + \left(\frac{\widetilde{C}_2}{m}+\frac{24 L_g^2C_3}{\mu_g^2m}\right)\sum_{t=1}^T\E[\delta_{y,t}]+\frac{4C_3}{\eta_3 \mu_g |I_t|}\delta_{\v,0} +\frac{8C_3\mu_g\eta_3 \sigma^2T}{B}+\frac{C_4\beta_0 T}{\min\{|I_t|,B\}}\\
    &\leq \frac{1}{\beta_0}\E[\|\nabla F(\x_0)-\z_1\|^2]+\left(\frac{4L_F^2 \eta_0^2}{\beta_0^2}+\frac{32m^2C_v^2\eta_0^2C_3}{\eta_3^2\mu_g^2|I_t|^2}+\frac{32m^2C_\alpha^2\eta_0^2C_1}{\eta_1^2\mu_f^2|I_t|^2}\right)\sum_{t=1}^T\E[\|\z_t\|^2]\\
    &\quad +\frac{4C_1}{\mu_f|I_t|\eta_1}\delta_{\alpha,0}  +\frac{8C_1\mu_f\eta_1 \sigma^2T}{B}+\frac{4C_3}{\eta_3 \mu_g |I_t|}\delta_{\v,0} +\frac{8C_3\mu_g\eta_3 \sigma^2T}{B}+\frac{C_4\beta_0 T}{\min\{|I_t|,B\}}\\
    &\quad + \left(\frac{\widetilde{C}_2}{m}+\frac{24 L_g^2C_3}{\mu_g^2m}+\frac{24 L_f^2C_1}{\mu_f^2m}\right)\sum_{t=1}^T\E[\delta_{y,t}]\\
    % &\leq \frac{1}{\beta_0}\E[\|\nabla F(\x_0)-\z_1\|^2]+\left(\frac{4L_F^2 \eta_0^2}{\beta_0^2}+\frac{32m^2C_v^2\eta_0^2C_3}{\eta_3^2\mu_g^2|I_t|^2}+\frac{32m^2C_\alpha^2\eta_0^2C_1}{\eta_1^2\mu_f^2|I_t|^2}\right)\sum_{t=1}^T\E[\|\z_t\|^2]\\
    % &\quad +\frac{4C_1}{\mu_f|I_t|\eta_1}\delta_{\alpha,0}  +\frac{8C_1\mu_f\eta_1 \sigma^2T}{B}+\frac{4C_3}{\eta_3 \mu_g |I_t|}\delta_{\v,0} +\frac{8C_3\mu_g\eta_3 \sigma^2T}{B}+\frac{C_4\beta_0 T}{\min\{|I_t|,B\}}\\
    % &\quad + \frac{C_2}{m}\left\{\frac{2m}{|I_t|\eta_2\mu_g}\delta_{y,0}+\frac{4m\eta_2 T\sigma^2}{\mu_gB} +\frac{8m^3C_y^2\eta_0^2}{|I_t|^2\eta_2^2\mu_g^2}\sum_{t=0}^{T-1}\E[\|\z_{t+1}\|^2]\right\}\\
    &\leq \frac{1}{\beta_0}\E[\|\nabla F(\x_0)-\z_1\|^2]+\left(\frac{4L_F^2 \eta_0^2}{\beta_0^2}+\frac{32m^2C_v^2\eta_0^2C_3}{\eta_3^2\mu_g^2|I_t|^2}+\frac{32m^2C_\alpha^2\eta_0^2C_1}{\eta_1^2\mu_f^2|I_t|^2}+\frac{8m^2C_y^2\eta_0^2C_2}{|I_t|^2\eta_2^2\mu_g^2}\right)\sum_{t=1}^T\E[\|\z_t\|^2]\\
    &\quad +\frac{4C_1}{\mu_f|I_t|\eta_1}\delta_{\alpha,0}  +\frac{8C_1\mu_f\eta_1 \sigma^2T}{B}+\frac{4C_3}{\eta_3 \mu_g |I_t|}\delta_{\v,0} +\frac{8C_3\mu_g\eta_3 \sigma^2T}{B}+\frac{C_4\beta_0 T}{\min\{|I_t|,B\}}\\
    &\quad + \frac{2C_2}{|I_t|\eta_2\mu_g}\delta_{y,0}+\frac{4C_2\eta_2 T\sigma^2}{\mu_gB} \\
    \end{aligned}
\end{equation}
where $C_1:=\widetilde{C}_1+\frac{24 L_g^2C_3}{\mu_g^2}$ and $C_2:=\widetilde{C}_2+\frac{24 L_g^2C_3}{\mu_g^2}+\frac{24 L_f^2C_1}{\mu_f^2}$

Recall Lemma~\ref{lemma:1}, we have
\begin{equation*}
\begin{aligned}
F(\x_{t+1})\leq F(\x_t)+\frac{\eta_{x}}{2}\|\nabla F(\x_t)-\z_{t+1}\|^2-\frac{\eta_0}{2}\|\nabla F(\x_t)\|^2-\frac{\eta_0}{4}\|\z_{t+1}\|^2
\end{aligned}
\end{equation*}
Combining with \ref{ineq:35}, we obtain
\begin{equation}\label{ineq:36}
    \begin{aligned}
    &\frac{1}{T+1}\sum_{t=0}^T \E[\|\nabla F(\x_t)\|^2]\\
    &\leq \frac{2\E[F(\x_0)-F(\x_{T+1})]}{\eta_0 T}+\frac{1}{T}\sum_{t=0}^T\E[\|\nabla F(\x_t)-\z_{t+1}\|^2]-\frac{1}{2T}\sum_{t=0}^T \E[\|\z_{t+1}\|^2]\\
    % &\leq  \frac{2\E[F(\x_0)-F(\x_{T+1})]}{\eta_0 T}+ \frac{1}{T}\Bigg\{ \frac{1}{\beta_0}\E[\|\nabla F(\x_0)-\z_1\|^2]\\
    % &\quad +\left(\frac{4L_F^2 \eta_0^2}{\beta_0^2}+\frac{32m^2C_v^2\eta_0^2C_3}{\eta_3^2\mu_g^2|I_t|^2}+\frac{32m^2C_\alpha^2\eta_0^2C_1}{\eta_1^2\mu_f^2|I_t|^2}+\frac{8m^2C_y^2\eta_0^2C_2}{|I_t|^2\eta_2^2\mu_g^2}\right)\sum_{t=1}^T\E[\|\z_t\|^2]\\
    % &\quad +\frac{4C_1}{\mu_f|I_t|\eta_1}\delta_{\alpha,0}  +\frac{8C_1\mu_f\eta_1 \sigma^2T}{B}+\frac{4C_3}{\eta_3 \mu_g |I_t|}\delta_{\v,0} +\frac{8C_3\mu_g\eta_3 \sigma^2T}{B}+\frac{C_4\beta_0 T}{\min\{|I_t|,B\}}\\
    % &\quad + \frac{2C_2}{|I_t|\eta_2\mu_g}\delta_{y,0}+\frac{4C_2\eta_2 T\sigma^2}{\mu_gB}\Bigg\}-\frac{1}{2T}\sum_{t=0}^T \E[\|\z_{t+1}\|^2]\\
    &\leq   \frac{1}{T}\Bigg\{\frac{2[F(\x_0)-F(\x^*)]}{\eta_0 }+ \frac{1}{\beta_0}\E[\|\nabla F(\x_0)-\z_1\|^2] +\frac{4C_1}{\mu_f|I_t|\eta_1}\delta_{\alpha,0}  +\frac{4C_3}{\eta_3 \mu_g |I_t|}\delta_{\v,0} + \frac{2C_2}{|I_t|\eta_2\mu_g}\delta_{y,0}\Bigg\}\\
    &\quad +\frac{8C_1\mu_f\eta_1 \sigma^2}{B}+\frac{8C_3\mu_g\eta_3 \sigma^2}{B}+\frac{4C_2\eta_2 \sigma^2}{\mu_gB}+\frac{C_4\beta_0 }{\min\{|I_t|,B\}}\\
    &\quad +\frac{1}{T}\left(\frac{4L_F^2 \eta_0^2}{\beta_0^2}+\frac{32m^2C_v^2\eta_0^2C_3}{\eta_3^2\mu_g^2|I_t|^2}+\frac{32m^2C_\alpha^2\eta_0^2C_1}{\eta_1^2\mu_f^2|I_t|^2}+\frac{8m^2C_y^2\eta_0^2C_2}{|I_t|^2\eta_2^2\mu_g^2}-\frac{1}{2}\right)\sum_{t=1}^T\E[\|\z_t\|^2]\\
    \end{aligned}
\end{equation}

By setting
\begin{equation*}
    \eta_0^2\leq \min\left\{\frac{\beta_0^2}{64L_F^2 },\frac{\eta_3^2\mu_g^2|I_t|^2}{512m^2C_v^2C_3},\frac{\eta_1^2\mu_f^2|I_t|^2}{512m^2C_\alpha^2C_1},\frac{|I_t|^2\eta_2^2\mu_g^2}{128m^2C_y^2C_2}\right\}
\end{equation*}
we have
\begin{equation*}
    \frac{4L_F^2 \eta_0^2}{\beta_0^2}+\frac{32m^2C_v^2\eta_0^2C_3}{\eta_3^2\mu_g^2|I_t|^2}+\frac{32m^2C_\alpha^2\eta_0^2C_1}{\eta_1^2\mu_f^2|I_t|^2}+\frac{8m^2C_y^2\eta_0^2C_2}{|I_t|^2\eta_2^2\mu_g^2}-\frac{1}{4}\leq 0
\end{equation*}
which implies that the last term of the right hand side of inequality~\ref{ineq:36} is less than or equal to zero. Hence
\begin{equation}
    \begin{aligned}
    &\frac{1}{T+1}\sum_{t=0}^T \E[\|\nabla F(\x_t)\|^2]\\
    &\leq   \frac{1}{T}\Bigg\{\frac{2[F(\x_0)-F(\x^*)]}{\eta_0 }+ \frac{1}{\beta_0}\E[\|\nabla F(\x_0)-\z_1\|^2] +\frac{4C_1}{\mu_f|I_t|\eta_1}\delta_{\alpha,0}  +\frac{4C_3}{\eta_3 \mu_g |I_t|}\delta_{\v,0} + \frac{2C_2}{|I_t|\eta_2\mu_g}\delta_{y,0}\Bigg\}\\
    &\quad +\frac{8C_1\mu_f\eta_1 \sigma^2}{B}+\frac{8C_3\mu_g\eta_3 \sigma^2}{B}+\frac{4C_2\eta_2 \sigma^2}{\mu_gB}+\frac{C_4\beta_0 }{\min\{|I_t|,B\}}\\
    \end{aligned}
\end{equation}
With
\begin{gather*}
    \eta_1\leq \frac{B\epsilon^2}{96C_1\mu_f \sigma^2},\eta_3\leq \frac{B\epsilon^2}{96C_3\mu_g \sigma^2},\eta_2\leq \frac{\mu_gB\epsilon^2}{48C_2 \sigma^2},\beta_0\leq \frac{\min\{|I_t|,B\}\epsilon^2}{12C_4 }\\
    T\geq \max\left\{\frac{32[F(\x_0)-F(\x^*)]}{\eta_0 \epsilon^2}, \frac{15\E[\|\nabla F(\x_0)-\z_1\|^2]}{\beta_0\epsilon^2}, \frac{60C_1\delta_{\alpha,0}}{\mu_f|I_t|\eta_1\epsilon^2}, \frac{60C_3\delta_{\v,0}}{\eta_3 \mu_g |I_t|\epsilon^2},  \frac{30C_2\delta_{y,0}}{|I_t|\eta_2\mu_g\epsilon^2}\right\}
\end{gather*}
we have
\begin{equation*}
    \frac{1}{T+1}\sum_{t=0}^T \E[\|\nabla F(\x_t)\|^2]\leq \frac{\epsilon^2}{3}+\frac{\epsilon^2}{3}<\epsilon^2
\end{equation*}

Furthermore, to show the second part of the theorem, following from inequality~\ref{ineq:35}, we have
\begin{equation}
    \begin{aligned}
    &\sum_{t=0}^T\E[\|\nabla F(\x_t)-\z_{t+1}\|^2]\\
    &\leq \frac{\E[\|\nabla F(\x_0)-\z_1\|^2]}{\beta_0}+\frac{4C_1\delta_{\alpha,0} }{\mu_f|I_t|\eta_1} +\frac{8C_1\mu_f\eta_1 \sigma^2T}{B}+\frac{4C_3\delta_{\v,0}}{\eta_3 \mu_g |I_t|} +\frac{8C_3\mu_g\eta_3 \sigma^2T}{B} +\frac{C_4\beta_0 T}{\min\{|I_t|,B\}}\\
    &\quad+ \frac{2C_2\delta_{y,0}}{|I_t|\eta_2\mu_g}+\frac{4C_2\eta_2 T\sigma^2}{\mu_gB} + \frac{1}{2}\sum_{t=0}^{T-1}\E[\|\nabla F(\x_t)\|^2]+\E[\|\nabla F(\x_t)-\z_{t+1}\|^2]
    \end{aligned}
\end{equation}
With parameter set above, we have
\begin{equation*}
    \frac{1}{T+1}\sum_{t=0}^T \E[\|\nabla F(\x_t)-\z_{t+1}\|^2]<2\epsilon^2
 \end{equation*}
\end{proof}

\subsection{Proof of Lemma~\ref{lem:v_update}}
This proof is the same as the proof of Lemma~\ref{alpha_update_sgd_new}.

\section{Algorithm and Derivation for Multi-task Deep Partial AUC Maximization}\label{pauc_append}

\begin{algorithm}[h]
\begin{algorithmic}[1]
\REQUIRE $\balp^0, \blam^0, H^0, \z^0, \w^0, \textbf{a}^0, \textbf{b}^0$
\FOR {$t=0,1,\dots,T$} 
	\STATE Draw task batch $I_t\subset \cS$.
	\STATE Draw data sample batch $\cB_k^t$ for each $k\in I_t$
	\STATE For sampled tasks $k\in I_t$, update
	\STATE \quad \, $\alpha_k^{t+1}\leftarrow \alpha_k^t+\eta_1 G_\alpha(\w^t,\alpha_k^t,\lambda_k^t;\cB_k^t)$\hfill $\diamond G_\alpha(\cdot)$ denotes a stochastic gradient w.r.t $\alpha_k$
	\STATE \quad \, $\lambda_k^{t+1}\leftarrow \lambda_k^t-\eta_2  \nabla_\lambda L_k(\lambda_k^t,\w^t;\cB_k^t) $
	\STATE  \quad \, $H_k^{t+1} \leftarrow (1-\beta_1)H_k^{t}+\beta_1\nabla^2_{\lambda\lambda} L_k(\lambda_k^t,\w^t;\cB_k^t)$
	\STATE Compute loss $G^{t+1}$ according to \ref{pauc_G}\hfill $\diamond G^{t+1}$ denotes an appropriate loss 
	\STATE Update gradient estimator $\Delta^{t+1}\leftarrow \text{autograd}(G^{t+1})$ 
	\STATE $\z^{t+1}\leftarrow(1-\beta_0)\z^t+\beta_0\Delta^{t+1}$
	\STATE $(\w^{t+1},\textbf{a}^{t+1},\textbf{b}^{t+1})\leftarrow (\w^t,\textbf{a}^t,\textbf{b}^t)-\eta_0 \z^{t+1}$
\ENDFOR
% \RETURN $.$
\end{algorithmic}
\caption{Min-Max Bilevel  Optimization for pAUC Maximization (MMB-pAUC)}\label{algoM}
\end{algorithm}

Let $\mathcal D_-[K]$ denote the top-K negative examples according to their prediction scores. Let $n_+,n_-$ denote the number of positive and negative label samples respectively. Then we have partial AUC loss formulated as following
\begin{align*}
    \min_{\w} \frac{1}{n_+}\sum_{\x_i\in\mathcal D_+}\frac{1}{n_-\rho}\sum_{\x_j\in\mathcal D_-[K]}(h_\w(\x_j) - h_\w(\x_i)+c)^2
\end{align*}
where $K=n_-\rho$. Let $a(\w) = \frac{1}{n_+}\sum_{\x_i\in\mathcal D_+}h_\w(\x_i)$ and $b(\w) = \frac{1}{n_-\rho}\sum_{\x_j\in\mathcal D_-[K]}h_\w(\x_j)$. 
Then we can transform the objective as 
\begin{align*}
    &\frac{1}{n_+}\sum_{\x_i\in\mathcal D_+}\frac{1}{n_-\rho}\sum_{\x_j\in\mathcal D_-[K]}(h_\w(\x_j) - b(\w) + a(\w) - h_\w(\x_i) + b(\w) - a(\w)+c)^2\\
    &= \frac{1}{n_+}\sum_{\x_i\in\mathcal D_+}(h_\w(\x_i) - a(\w))^2 + \frac{1}{n_-\rho}\sum_{\x_j\in\mathcal D_-[K]}(h_\w(\x_j) - b(\w))^2 + (b(\w) - a(\w)+c)^2
\end{align*}
Then we can write the problem as 
\begin{equation}\label{eq:001}
        \min_{\w, a, b}\frac{1}{n_+}\sum_{\x_i\in\mathcal D_+}(h_\w(\x_i) - a)^2 + \frac{1}{n_-\rho}\sum_{\x_j\in\mathcal D_-}\I(\x_j\in\cD_-[K])(h_\w(\x_j) - b)^2 + (b(\w) - a(\w)+c)^2
\end{equation}
Replacing the indicator function by $\I(\x_j\in\cD_-[K])=\psi(h_\w(\x_j)- \lambda(\w))$, where $\lambda(\w)$ represents the $K+1$-th largest scores among all negative examples, which can be represented as
%\begin{equation*}
%\begin{aligned}
%F(\w,a,b,\alpha)& :=\E_\cS[f(\w,a,b,\alpha;\z)]\\
%& :=\E_\cS\bigg[ \frac{K(1-p)}{n_-}(a-h_\w(x))^2\I_{[y=1]} +p(b-h_\w(x))^2\psi(h_\w(x)-\widehat{\lambda}(\w))I_{[y=-1]}\\
%&\quad  +\frac{K}{n_-}  2\alpha \left(p(1-p)-(1-p)h_\w(x)\I_{[y=1]}+\frac{p(1-p)n^+ +n_-}{K}h_\w(x)\psi(h_\w(x)-\widehat{\lambda}(\w))I_{[y=-1]}\right)\\
%&\quad-p(1-p)\frac{K}{n_-} \alpha^2 \bigg
%],
%\end{aligned}
%\end{equation*}
%where $\lambda(\w)$ is the $(K+1)$-th largest value among $\{h_\w(x)\}_{\y=-1}$. A simple choice for $\phi$ is $\psi(s)=\max\{s,0\}^2$
\begin{equation*}
    \lambda(\w) = \arg\min_{\lambda}\frac{K+\varepsilon}{n_-}\lambda +\frac{1}{n_-}\sum_{\x\in\cD_-}(h_\w(\x) -\lambda)_+.
\end{equation*}
We smooth the problem as 
\begin{align*}
    &\hat\lambda(\w)  = \argmin_{\lambda}L(\lambda, \w): = \frac{K+\varepsilon}{n_-}\lambda+ \frac{\tau_2}{2}\lambda^2+\frac{1}{n_-}\sum_{\x\in\cD_-}\tau_1\ln(1+\exp((h_\w(\x)-\lambda)/\tau_1)).
\end{align*}

Due to the fact that the last term $(b(\mathbf w)-a(\mathbf w)+c)^2$ in Problem~\ref{eq:001} cannot be directly obtained since $a(\mathbf w)$ and $b(\mathbf w)$ are expectations, one may use $p^2 = \max_\alpha 2p\alpha-\alpha^2$ to get
\begin{equation}
    (b(\mathbf w)-a(\mathbf w)+c)^2=\max_\alpha \mathbb{E}_{\textbf{x}_j\in \mathcal{D}_-[K],\,\textbf{x}_i\in \mathcal D_+}\left[2\alpha(h_{\textbf{w}}(\textbf{x}_j)- h_{\textbf{w}}(\textbf{x}_i)+c)-\alpha^2\right]
\end{equation}
Then by replacing the top-K selector with $\phi(h_{\textbf{w}}(\textbf{x}_j)-\hat{\lambda}(\textbf{w}))$, the partial AUC minimization problem can be formulated as a min-max bilevel optimization problem.

\begin{align*}
        &\min_{\w, a, b}\frac{1}{n_+}\sum_{\x_i\in\mathcal D_+}(h_\w(\x_i) - a)^2 + \frac{1}{n_-\rho}\sum_{\x_j\in\mathcal D_-}\phi(h_\w(\x_j) - \hat{\lambda}(\w))(h_\w(\x_j) - b)^2\\
        &+\max_{\alpha} 2\alpha \left(\frac{1}{n_-\rho}\sum_{\x_j\in\mathcal D_-}\phi(h_\w(\x_j) - \hat{\lambda}(\w))h_\w(\x_j) - \frac{1}{n_+}\sum_{\x_i\in\mathcal D_+}h_\w(\x_i)+c\right) - \alpha^2 \\
        &s.t., \hat{\lambda}(\w)  = \argmin_{\lambda}L(\lambda, \w): = \frac{K+\varepsilon}{n_-}\lambda+ \frac{\tau_2}{2}\lambda^2+\frac{1}{n_-}\sum_{\x_j\in\mathcal D_-}\tau_1\ln(1+\exp((h_\w(\x_j)-\lambda)/\tau_1))
\end{align*}
We consider multi-task partial AUC maximization, which is then given by
\begin{align*}
        &\min_{\w, \mathbf a\in\R^m, \mathbf b\in\R^m}\max_{\balp\in\R^m}\sum_{k=1}^m\Bigg\{\frac{1}{n^k_+}\sum_{\x_i\in\mathcal D^k_+}(h_\w(\x_i;k) - a_k)^2 + \frac{1}{n^k_-\rho}\sum_{\x_j\in\mathcal D^k_-}\phi(h_\w(\x_j; k) - \hat{\lambda}_k(\w))(h_\w(\x_j; k) - b_k)^2\\
        &+ 2\alpha_k \Bigg(\frac{1}{n_-^k\rho}\sum_{\x_j\in\mathcal D_-^k}\phi(h_\w(\x_j; k) - \hat{\lambda}_k(\w))h_\w(\x_j; k) - \frac{1}{n^k_+}\sum_{\x_i\in\mathcal D^k_+}h_\w(\x_i; k)+c\Bigg) - \alpha_k^2 \Bigg\}\\
        &s.t., \hat{\lambda}_k(\w)  = \argmin_{\lambda}L_k(\lambda, \w): = \frac{K+\varepsilon}{n_-}\lambda+ \frac{\tau_2}{2}\lambda^2+\frac{1}{n_-}\sum_{\x_j\in\mathcal D^k_-}\tau_1\ln(1+\exp((h_\w(\x_j; k)-\lambda)/\tau_1))
\end{align*}
For function $\phi$, we use sigmoid function $\phi(s)= \frac{1}{1+\exp(-s)} = \sigma(s)$. $\nabla\phi(h_\w(\x_j; k) - \lambda_k(\w)) = \sigma(h_\w(\x_j; k) - \lambda_k(\w))(1-\sigma(h_\w(\x_j; k) - \lambda_k(\w))(\nabla h_\w(\x_j;k) -\nabla \lambda_k(\w))$, where $\nabla \lambda_k(\w) = - (\nabla_{\lambda \lambda}^2L_k(\lambda, \w))^{-1}\nabla_{\w \lambda}^2L_k(\lambda, \w)$. Let $H_k=(\nabla_{\lambda \lambda}^2L_k(\lambda, \w))$, and 
% $\nabla_{\w \lambda}^2L_k(\lambda, \w ) = \frac{1}{n_-}\sum_{\x_j\in\mathcal D^k_-}\frac{\exp((h_\w(\x_j; k)-\lambda_k)/\tau_1)}{1+\exp((h_\w(\x_j; k)-\lambda_k)/\tau_1)})\nabla h_\w(\x_j; k)$.
\begin{equation*}
    \begin{aligned}
    \nabla_{\w \lambda}^2L_k(\lambda, \w ) &=\nabla_\w \left[ \nabla_\lambda L_k(\lambda, \w )\right]\\
    &=\nabla_\w \left(\frac{K+\varepsilon}{n_-}+ \tau_2\lambda-\frac{1}{n_-}\sum_{\x_j\in\mathcal D^k_-}\frac{\exp((h_\w(\x_j; k)-\lambda)/\tau_1)}{1+\exp((h_\w(\x_j; k)-\lambda)/\tau_1)}\right)\\
    &= -\frac{1}{n_-}\sum_{\x_j\in\mathcal D^k_-}\nabla_\w\left(\frac{\exp((h_\w(\x_j; k)-\lambda)/\tau_1)}{1+\exp((h_\w(\x_j; k)-\lambda)/\tau_1)}\right)
    \end{aligned}
\end{equation*}

As a result in order to compute in Pytorch $\nabla\phi(h_\w(\x_j; k) - \lambda(\w))$ we can define the following loss 
% \begin{align}
%     L^k_\phi(\w) & = st(\sigma(h_\w(\x_j; k) - \lambda_k)(1-\sigma(h_\w(\x_j; k) - \lambda_k))\cdot\\
%     &\left(h_\w(\x_j;k) + st([H_k]^{-1}) \frac{1}{B_-}\sum_{\x_j\in\mathcal B^k_-}st\left(\frac{\exp((h_\w(\x_j; k)-\lambda_k)/\tau_1)}{1+\exp((h_\w(\x_j; k)-\lambda_k)/\tau_1)}\right)h_\w(\x_j; k)\right)
% \end{align}
\begin{align}
    L^k_\phi(\w) & = st(\sigma(h_\w(\x_j; k) - \lambda_k)(1-\sigma(h_\w(\x_j; k) - \lambda_k))\cdot\\
    &\left(h_\w(\x_j;k) - st([H_k]^{-1}) \frac{1}{B_-}\sum_{\x_j\in\mathcal B^k_-}\frac{\exp((h_\w(\x_j; k)-\lambda_k)/\tau_1)}{1+\exp((h_\w(\x_j; k)-\lambda_k)/\tau_1)}\right)
\end{align}
so that $\nabla L^k_\phi(\w)=\nabla\phi(h_\w(\x_j; k) - \lambda(\w))$.
Hence for updating $\w, \textbf{a}, \textbf{b}$ we can define the following loss and the Pytorch will compute the gradient automatically: 
\begin{equation}
\begin{aligned}\label{pauc_G}
  G^{t+1}:=\frac{1}{|I_t|} &\sum_{k\in I_t}\Bigg\{\frac{1}{B^k_+}\sum_{\x_i\in\mathcal B^k_+}(h_\w(\x_i;k) - a_k)^2 + \frac{1}{B^k_-\rho}\sum_{\x_j\in\mathcal B^k_-}st(\phi(h_\w(\x_j; k) - \lambda_k))(h_\w(\x_j; k) - b_k)^2 \\
  &+\frac{1}{B^k_-\rho}\sum_{\x_j\in\mathcal B^k_-}L^k_\phi(\w) st((h_\w(\x_j; k) - b_k)^2)+2\alpha_k \bigg(\frac{1}{B_-^k\rho}\sum_{\x_j\in\mathcal B_-^k}st(\phi(h_\w(\x_j; k) - \lambda_k))h_\w(\x_j; k)\\
  &+ \frac{1}{B_-^k\rho}\sum_{\x_j\in\mathcal B_-^k}L^k_\phi(\w)st(h_\w(\x_j; k)) - \frac{1}{B^k_+}\sum_{\x_i\in\mathcal B^k_+}h_\w(\x_i; k)+c\bigg) \Bigg\} 
\end{aligned}
\end{equation}
The loss for $\alpha_k$, $\lambda_k$ and $H_k$ in the mini-batch for computing the gradient can be easily defined. For practical version the terms that involve $L^k_\phi(\w)$ can be ignored. 

% \begin{equation*}
% \begin{aligned}
%     &G_\alpha(\w,\alpha_k,\lambda_k;\cB_k):=2\left(\frac{1}{B_-^k\beta}\sum_{\x_j\in\mathcal B_-^k}\phi(h_\w(\x_j; k) - \lambda_k)h_\w(\x_j; k) - \frac{1}{B^k_+}\sum_{\x_i\in\mathcal B^k_+}h_\w(\x_i; k)+c\right) - 2\alpha_k\\
%     & \nabla_\lambda L_k(\lambda_k,\w;\cB_k)=\frac{K+\varepsilon}{n_-}+ \tau_2\lambda-\frac{1}{B_-^k}\sum_{\x_j\in\mathcal B^k_-}\frac{\exp((h_\w(\x_j; k)-\lambda)/\tau_1)}{1+\exp((h_\w(\x_j; k)-\lambda)/\tau_1)}\\
%     & \nabla_{\lambda\lambda}^2 L_k(\lambda_k,\w;\cB_k)=\tau_2+\frac{1}{\tau_1 B_-^k}\sum_{\x_j\in\mathcal B^k_-}\frac{\exp((h_\w(\x_j; k)-\lambda)/\tau_1)}{(1+\exp((h_\w(\x_j; k)-\lambda)/\tau_1))^2}
% \end{aligned}
% \end{equation*}

\section{Additional Experiments}

%\vspace{0.4in}
\begin{figure}[H]
  \centering
    \begin{subfigure}[b]{0.245\textwidth}
        \includegraphics[width=\textwidth]{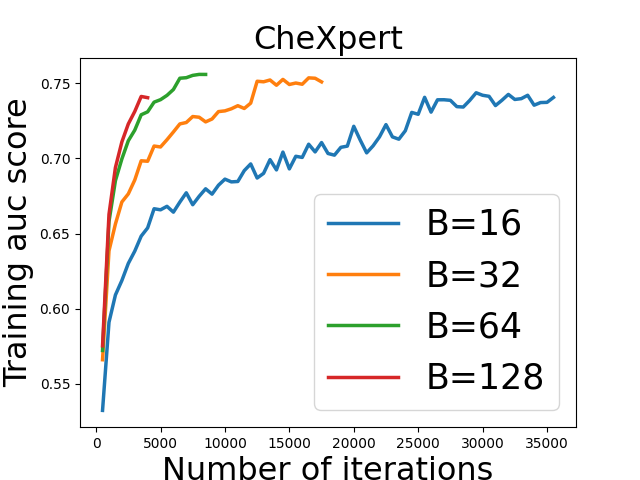}
        \caption{Varying $B$}
    \end{subfigure}
    \begin{subfigure}[b]{0.245\textwidth}
        \includegraphics[width=\textwidth]{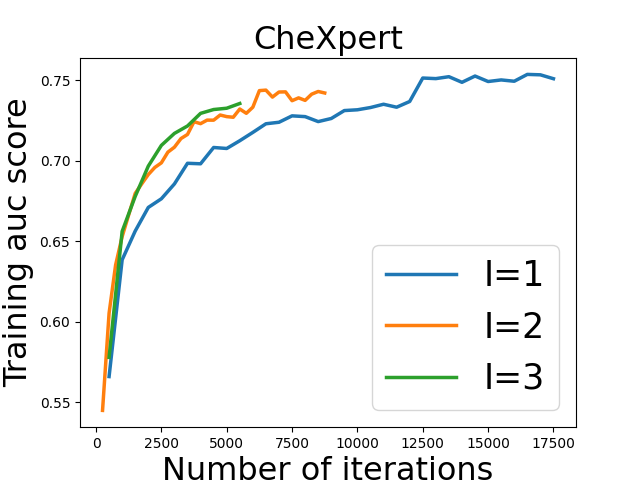}
        \caption{Varying $I$}
    \end{subfigure}
    \begin{subfigure}[b]{0.245\textwidth}
        \includegraphics[width=\textwidth]{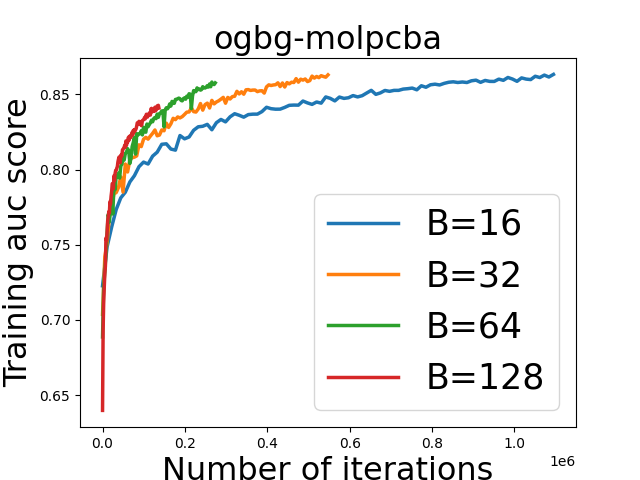}
        \caption{Varying $B$}
    \end{subfigure}
    \begin{subfigure}[b]{0.245\textwidth}
        \includegraphics[width=\textwidth]{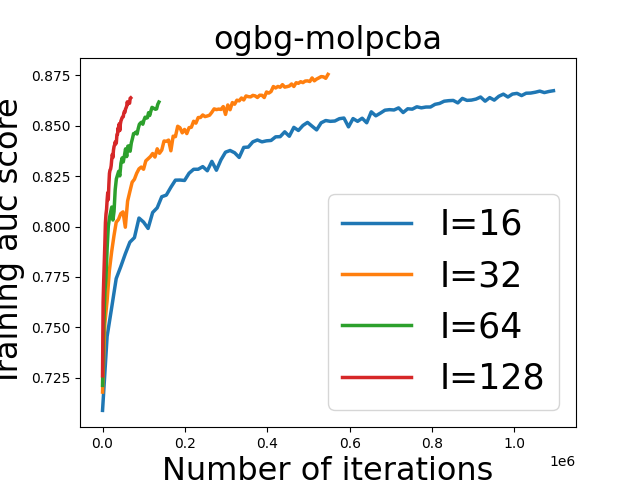}
        \caption{Varying $I$}
    \end{subfigure}
     \caption{Convergence of our method vs data sample batch sizes $B$ and vs task sample batch size $I:=|I_t|$  for multi-task deep AUC maximization.}
     \label{ablation_append}
    \vspace*{0in}
    \centering
    %\begin{subfigure}[b]{0.24\textwidth}
    %    \includegraphics[width=\textwidth]{auc_graphs/cifar.png}
        % \caption{CIFAR100}
    %\end{subfigure}
    
    %\begin{subfigure}[b]{0.24\textwidth}
    %    \includegraphics[width=\textwidth]{auc_graphs/chexpert.png}
        % \caption{CheXpert}
    %\end{subfigure}

%\end{figure}
%\begin{figure}[t]
\end{figure}

% For partial AUC experiments, we add one more baseline for comparison: Focal loss. For hyperparameters, we select gamma from $\{1,2,4\}$ and alpha from $\{0.25,0.5,0.75\}$ 
% \begin{table}[H]
% \vspace*{-0.15in}
% \centering
% \caption{The testing partial AUC scores on the four datasets.}% "-" means not applicable, since our model is pre-trained on CE for CIFAR100 and CelebA }
% \label{table3}
% \scalebox{0.9}{
% \begin{tabular}{ccccc}
% \hline
% Method\textbackslash{}DataSet &  CIFAR100 & CelebA & CheXpert & ogbg-molpcba \\ \hline
% \multicolumn{1}{c}{CE} & 0.8895 (0.0009)  & 0.8024 (0.0026) & 0.6606 (0.0159) & 0.6576 (0.0010)\\ \hline
% \multicolumn{1}{c}{Focal} & 0.8966 (0.0007) &   0.8064 (0.0011)&   0.6646 (0.0132)& 0.6453 (0.0021)\\ \hline
% \multicolumn{1}{c}{MB} & 0.9188 (0.0006) & 0.8304 (0.0005)  & 0.6759 (0.0160)  & 0.7213 (0.0018)\\ \hline
% \multicolumn{1}{c}{SOPA-s} & 0.9251 (0.0003) & 0.8336 (0.0001)  &   0.6682 (0.0156)&0.7290 (0.0019)\\ \hline
% \multicolumn{1}{c}{Ours} & {\bf 0.9262 (0.0005)} & {\bf 0.8360 (0.0003)}  &   {\bf 0.6827 (0.0183)}&{\bf 0.7374 (0.0015)}\\ \hline
% \end{tabular}}
%  \vspace{0.1in}
% \end{table}
\begin{figure}[h]
    \centering
    \begin{subfigure}[b]{0.245\textwidth}
        \includegraphics[width=\textwidth]{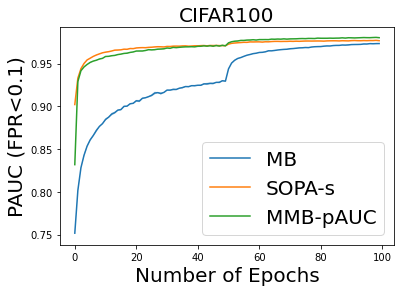}
       % \caption{CelebA}
    \end{subfigure}
        \begin{subfigure}[b]{0.24\textwidth}
        \includegraphics[width=\textwidth]{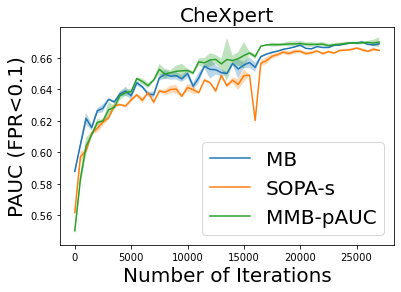}
        %\caption{ogbg-molpcba}
    \end{subfigure}
     \caption{Comparison of Convergence on training data for multi-task deep pAUC maximization on the CIFAR100 and CheXpert datasets. }
    \label{fig:my_label}
\end{figure}

\end{document}